\documentclass[a4paper,10pt]{article}

\usepackage[english]{babel}
\usepackage{a4wide}
\usepackage[utf8]{inputenc}
\usepackage{amssymb,amsmath,amscd,amsfonts,amsthm,bbm,mathrsfs,enumerate}
\usepackage[shortlabels]{enumitem}
\usepackage[colorlinks=true, linkcolor=blue, citecolor=blue]{hyperref}
\usepackage{graphicx} 
\usepackage{tikz,pgfplots}
\usepackage{color} 
\usepackage{yhmath}
\usepackage{accents} 

\usepackage[textwidth=1.8cm, textsize=footnotesize]{todonotes}

\DeclareUnicodeCharacter{00A0}{~}

\theoremstyle{definition}
\newtheorem{theorem}{Theorem}[section]
\newtheorem*{theorem*}{Statement}
\newtheorem{lemma}[theorem]{Lemma}
\newtheorem{corollary}[theorem]{Corollary}
\newtheorem{proposition}[theorem]{Proposition}

\newtheorem*{condition*}{Condition}
\newtheorem{assumption}{Assumption}[section]
\newtheorem*{assumption*}{Assumption}

\setlength{\marginparwidth}{1.5cm}               

\DeclareMathOperator{\dom}{dom}

\DeclareMathOperator{\inter}{int}
\DeclareMathOperator{\relint}{ri}
\DeclareMathOperator{\epi}{epi}
\DeclareMathOperator{\graph}{gr}
\DeclareMathOperator{\prox}{prox}

\DeclareMathOperator{\support}{supp}

\DeclareMathOperator{\lev}{lev}

\DeclareMathOperator{\cl}{cl}

\DeclareMathOperator{\distC}{\mathsf d}

\newcommand{\1}{\mathbbm 1}

\newcommand{\sx}{{\mathsf x}}
\newcommand{\sy}{{\mathsf y}}
\newcommand{\sz}{{\mathsf z}}
\newcommand{\sw}{{\mathsf w}}

\newcommand{\sH}{{\mathsf H}}

\newcommand{\bP}{{{\mathbb P}}} 
\newcommand{\bE}{{{\mathbb E}}} 
\newcommand{\bN}{{{\mathbb N}}} 

\newcommand{\mA}{{\mathcal A}} 
\newcommand{\mB}{{\mathcal B}} 
\newcommand{\mC}{{\mathcal C}} 
\newcommand{\mD}{{\mathcal D}} 
\newcommand{\mU}{{\mathcal U}}

\newcommand{\bmD}{\cl({\mathcal D})} 

\newcommand{\sA}{{\mathsf A}}
\newcommand{\sB}{{\mathsf B}}
\newcommand{\sJ}{{\mathsf J}}
\newcommand{\sX}{{\mathsf X}}
\newcommand{\sG}{{\mathsf G}}

\newcommand{\maxmon}{{\mathscr M}} 
\newcommand{\Selec}{{\mathfrak S}} 

\newcommand{\mcA}{{\mathscr A}} 
\newcommand{\mcB}{{\mathscr B}} 
\newcommand{\mcN}{{\mathscr N}}

\newcommand{\mcF}{{\mathscr F}} 
\newcommand{\mcG}{{\mathscr G}}

\newcommand{\cP}{{{\mathcal P}}} 
\newcommand{\cS}{{{\mathcal S}}}

\newcommand{\cM}{{{\mathcal M}}} 
\newcommand{\cD}{{{\mathcal D}}}

\newcommand{\cL}{{{\mathcal L}}}

\newcommand{\cK}{{{\mathcal K}}} 
\newcommand{\cI}{{{\mathcal I}}} 

 
\newcommand{\bR}{{{\mathbb R}}}

\newcommand{\ps}[1]{\langle #1 \rangle}

%
\newcommand{\bs}{\boldsymbol}
%

%
\newcommand{\eqdef}{:=} 
%


%

%
%

\begin{document}

\title{A constant step Forward-Backward algorithm involving random
maximal monotone operators}



\author{Pascal Bianchi$^*$ \and Walid Hachem$^\dagger$ \and 
Adil Salim\thanks{LTCI, T\'el\'ecom ParisTech, Universit\'e 
 Paris-Saclay. \newline
46, rue Barrault, 75634 Paris Cedex 13, France.
\newline
$\dagger$CNRS / LIGM (UMR 8049), Universit\'e Paris-Est Marne-la-Vallée. 
\newline 
 5, boulevard Descartes, Champs-sur-Marne, 77454, Marne-la-Vallée Cedex 2, 
France.
\newline
This work was supported by the Agence Nationale pour la Recherche,
France, (ODISSEE project, ANR-13-ASTR-0030) and by the Labex Digiteo-DigiCosme
(OPALE project), Universit\'e Paris-Saclay.
\newline
\texttt{pascal.bianchi@telecom-paristech.fr, walid.hachem@u-pem.fr, 
adil.salim@telecom-paristech.fr}}
} 

\date{\today}

\maketitle

\begin{abstract} 
A stochastic Forward-Backward algorithm with a constant step is studied.  At
each time step, this algorithm involves an independent copy of a couple of
random maximal monotone operators.  Defining a mean operator as a selection
integral, the differential inclusion built from the sum of the two mean
operators is considered.  As a first result, it is shown that the interpolated
process obtained from the iterates converges narrowly in the small step regime
to the solution of this differential inclusion.  In order to control the long
term behavior of the iterates, a stability result is needed in addition.  To
this end, the sequence of the iterates is seen as a homogeneous Feller Markov
chain whose transition kernel is parameterized by the algorithm step size. 
The cluster points of the Markov chains invariant measures in the small step
regime are invariant for the semiflow induced by the differential inclusion.
Conclusions regarding the long run behavior of the iterates for small steps are
drawn.  It is shown that when the sum of the mean operators is demipositive,
the probabilities that the iterates are away from the set of zeros of this sum
are small in Ces\`aro mean.  The ergodic behavior of these iterates is studied
as well. Applications of the proposed algorithm are considered.  In particular,
a detailed analysis of the random proximal gradient algorithm with constant
step is performed.  
\end{abstract}

{\bf Keywords: }  
  Dynamical systems,  
  Narrow convergence of stochastic processes,    
  Random maximal monotone operators,  
  Stochastic approximation with constant step, 
  Stochastic Forward - Backward algorithm, 
  Stochastic proximal point algorithm. \\
{47H05, 47N10, 62L20, 34A60.}

\section{Introduction} 
\label{sec-intro}

Given two maximal monotone operators $\sA$ and $\sB$ on the space $E=\bR^N$, 
where $\sB$ is single valued, the Forward-Backward splitting algorithm is an
iterative algorithm for finding a zero of the sum operator $\sA + \sB$. It
reads 
\begin{equation} 
\label{fb-deterministe} 
x_{n+1} = ( I + \gamma \sA )^{-1} ( x_n - \gamma \sB(x_n) ) \, , 
\end{equation} 
where $\gamma$ is a positive step. This algorithm consists in a forward 
step $(I - \gamma \sB) (x_n)$ followed by a backward step, where 
the resolvent $( I + \gamma \sA )^{-1}$ of $\sA$, known to be single valued
as $\sA$ is maximal monotone, is applied to the output of
the former. When $\sB$ satisfies a so called cocoercivity condition, and when
the step $\gamma$ is small enough, the convergence of the algorithm towards a 
zero of $\sA + \sB$ (provided it exists) is a well established fact
\cite[Ch.~25]{bau-com-livre11}. 
In the field of convex optimization, this algorithm can be used to find a
minimizer of the sum of two real functions $F + G$ on $E$, where $F$ is a
convex function which is defined on the whole $E$ and which has a Lipschitz
gradient, and where $G$ is a convex, proper, and lower semi continuous (lsc)
function. In this case, the Forward-Backward algorithm is known as the proximal
gradient algorithm, and is written as 
$x_{n+1} = \prox_{\gamma G}(x_n-\gamma\nabla F(x_n))$, where 
$\prox_{\gamma G} \eqdef (I + \gamma\partial G)^{-1}$ is Moreau's proximity
operator of $\gamma G$. 

In this paper, we are interested in the situation where the operators 
$\sA$ and $\sB$ are replaced with random maximal monotone operators. 
Denote as $\maxmon$ the set of maximal monotone operators on $E$, let 
$A , B : \Xi \to \maxmon$ be two functions from a measurable space 
$(\Xi, \mcG)$ to $\maxmon$, and let $(\xi_n)$ be a sequence of independent and
identically distributed (iid) random variables from some probability space
to $(\Xi, \mcG)$ with the probability distribution $\mu$. Assuming that 
$B(s)$ is single-valued operator defined on the whole $E$, we examine the 
stochastic version of the Forward-Backward algorithm 
\begin{equation}
\label{fb-sto-intro} 
x_{n+1} = ( I + \gamma A(\xi_{n+1}))^{-1} ( I - \gamma B(\xi_{n+1})) x_n \, ,  
\quad \gamma > 0 \, . 
\end{equation} 
Our aim is to study the dynamical behavior of this algorithm in the limit of 
the small steps $\gamma$, where the effect of the noise due to the $\xi_n$ 
will be smoothened. 

To give an application example for this algorithm, let us consider again the
minimization problem of the sum $F+G$, and let us assume that these functions
are unknown to the observer (or difficult to compute), and are written as 
$F(x) = \bE_{\xi_1} f(\xi_1, x)$ and $G(x) = \bE_{\xi_1} g(\xi_1, x)$.  When
the functions $f$ and $g$ are known with $f(\xi_1,\cdot)$ being convex
differentiable, and $g(\xi_1,\cdot)$ being convex, proper, and lsc, and when an 
iid sequence $(\xi_n)$ is available, we can approximatively solve 
the minimization problem of $F+G$ by resorting to the stochastic proximal 
gradient algorithm
$x_{n+1} = \prox_{\gamma g(\xi_{n+1}, \cdot)} 
   (x_n - \gamma\nabla_x f(\xi_{n+1}, x_n))$. 
Similar algorithms has been studied in \cite{bia-hac-16,ros-vil-vu-16} with the additional
assumption that the step size $\gamma$ vanishes as $n$ tends to infinity.
The main asset of such vanishing step size algorithms 
is that the iterates (with or without averaging) converge almost surely as the iteration index goes to infinity.
This paper focuses on the case where the step size $\gamma$ is fixed w.r.t. $n$.
As we shall see below, convergence hold in a weaker sense in this case.
Loosely speaking, the iterates fluctuate in a small neighborhood of the set of sought solutions, but do not converge in an almost sure sense
as $n\to \infty$.
Yet, constant step size algorithms have raised a great deal of attention in the signal processing and machine learning literature (\cite{dieuleveut2017bridging}). First, they are known to reach a neighborhood of the solution in a fewer number of iterations than the decreasing step algorithms. Second, they are in practice able to adapt to non stationary or slowly changing environments,
and thus track a possible changing set of solutions. This is particularly helpful in adaptive signal processing
for instance.\\

In order to study the dynamical behavior of \eqref{fb-sto-intro}, we introduce 
the operators 
\[
\mA = \int A(s) \, \mu(ds) \quad \text{and} \quad 
\mB = \int B(s) \, \mu(ds)  \, , 
\]
where the first integral is a set-valued integral, which is to be recognized as
a \emph{selection integral} \cite{molchanov2006theory}. Assuming that the 
monotone operator $\mA + \mB$ is maximal, it is a standard fact of the 
monotone operator theory that for any $x_0$ in the domain of $\mA + \mB$, the 
Differential Inclusion (DI) 
\begin{equation} 
\label{eq:di-intro}
\left\{\begin{array}[h]{lcl}
\dot \sx(t) &\in& - (\mA + \mB)(\sx(t)) \,  \\
 \sx(0) &=& x_0
\end{array}\right. 
\end{equation} 
admits a unique absolutely continuous solution on $\bR_+ \eqdef[0,\infty)$
\cite{bre-livre73,aub-cel-(livre)84}. 
Let $\sx_\gamma(t)$ be the continuous random process obtained by assuming
that the iterates $x_n$ are distant apart by the time step $\gamma$, and by
interpolating linearly these iterates. Then, the first step of the approach
undertaken in this paper is to show that $\sx_\gamma$ shadows the solution of
the DI for small $\gamma$, in the sense that it converges narrowly to this
solution as $\gamma\to 0$ in the topology of convergence on the compact sets of
$\bR_+$. The same idea is behind the so-called ODE method which is frequently
used in the stochastic approximation literature 
\cite{ben-(cours)99, kus-yin-(livre)03}. 

The compact convergence alone is not enough to control the long term behavior
of the iterates. A stability result is needed. To that end, the second step of
the approach is to view the sequence $(x_n)$ as a homogeneous Feller Markov
chain whose transition kernel is parameterized by $\gamma$. In this context,
the aim is to show that the set of invariant measures for this kernel is non
empty, and that the family of invariant measures obtained for all $\gamma$
belonging to some interval $(0,\gamma_0]$ is tight.  We shall obtain a general
tightness criterion which will be made more explicit in a number of situations
of interest involving random maximal monotone operators.

The narrow convergence of $\sx_\gamma$, together with the tightness of the
Markov chain invariant measures, lead to the invariance of the small $\gamma$
cluster points of these invariant measures with respect to the semiflow induced
by the DI \eqref{eq:di-intro} (see \cite{has-63,for-pag-99,ben-hir-aap99} for
similar contexts). Using these results, it becomes possible to characterize the
long run behavior of the iterates $(x_n)$. In particular, the proximity of
these iterates to the set of zeros $Z(\mA+\mB)$ of $\mA+\mB$ is of obvious
interest. First, we show that when the operator $\mA+\mB$ is
\emph{demipositive} \cite{bru-75}, the probabilities that the iterates are away
from $Z(\mA+\mB)$ are small in Ces\`aro mean.  Whether $\mA+\mB$ is
demipositive or not, we can also characterize the ergodic behavior of the
algorithm, showing that when $\gamma$ is small, the partial sums $n^{-1}
\sum_1^n x_k$ stay close to $Z(\mA+\mB)$ with a high probability.

Stochastic approximations with differential inclusions were considered in
\cite{ben-hof-sor-05} and in \cite{fau-rot-10} from the dynamical systems
viewpoint. The case where the DI is defined by a maximal monotone operator was
studied in \cite{bia-16}, \cite{bia-hac-16}, and \cite{ros-vil-vu-16}.
Instances of the random proximal gradient algorithm were treated in
\emph{e.g.}, \cite{atc-for-mou-14} or \cite{rosasco2014convergence}.  All these
references dealt with the decreasing step case, which requires quite different
tools from the constant step case. This case is considered in 
\cite{com-pes-pafa16} (see also \cite{com-pes-siam15}), which relies on a 
Robbins-Siegmund like approach requiring summability assumptions on the random 
errors. The constant step case is also dealt with in 
\cite{rot-san-siam13} and in \cite{bia-hac-sal-(arxiv)16} for generic
differential inclusions.  In the present work, we follow the line of reasoning
of our paper \cite{bia-hac-sal-(arxiv)16}, noting that the case where the DI is
defined by a maximal monotone operator has many specificities.  For
instance, a maximal monotone operator is not upper semi continuous in general,
as it was assumed for the differential inclusions studied in
\cite{rot-san-siam13} and \cite{bia-hac-sal-(arxiv)16}. Another difference lies
in the fact that we consider here the case where the domains of the operators
$A(s)$ can be different. Finally, the tightness criterion for the 
Markov chain invariant measures requires a quite specific treatment in the 
context of the maximal monotone operators. 

We close this paragraph by mentioning \cite{ber-11}, where one of the studied
stochastic proximal gradient algorithms can be cast in the general framework of
\eqref{fb-sto-intro}. 

\paragraph*{Paper organization.} 
Section \ref{sec-bckgrd} introduces the main algorithm and recalls some known
facts about random monotone operators and their selection integrals.
Section~\ref{sec-results} provides our assumptions and states our main result
about the long run behavior of the iterates. A brief sketch of the proof is
also provided for convenience, the detailed arguments being postponed to the
end of the paper.  Section~\ref{sec-tightness} provides some illustrations of
our results in particular cases.  The monotone operators involved are assumed
to be subdifferentials, hence covering the context of numerical optimization.
Our assumptions are discussed at length in this scenario.  The case when the
monotone operators are linear maps is addressed as well.  Section~\ref{sec-apt}
analyzes the dynamical behavior of the iterates.  It is shown that the
piecewise linear interpolation of the iterates converges narrowly, uniformly on
compact sets, to a solution to the DI.  The result, which has its own interest,
is the first key argument to establish the main Theorem of
Section~\ref{sec-results}.  The second argument is provided in
Section~\ref{sec-clust}, where we characterize the cluster points of the
invariant mesures (indexed by the step size) of the Markov chain formed by the
iterates.  The appendices~\ref{anx:case-studies} and~\ref{anx:apt} are devoted
to the proofs relative to Sections~\ref{sec-tightness} and~\ref{sec-apt}
respectively.

\section{Background and problem statement}
\label{sec-bckgrd}

\subsection{Basic facts on maximal monotone operators}
\label{basic}

We start by recalling some basic facts related with the maximal monotone
operators on $E$ and with their associated differential inclusions. These facts
will be used in the proofs without mention.  For more details, the reader is
referred to the treatises \cite{bre-livre73}, \cite{aub-cel-(livre)84}, or
\cite{bau-com-livre11}, or to the tutorial paper \cite{pey-sor-10}. 

Consider a set valued mapping $\sA : E \rightrightarrows E$, \emph{i.e.}, 
for each $x\in E$, $\sA(x)$ is a subset of $E$. The domain and the graph of 
$\sA$ are the respective subsets of $E$ and $E\times E$ defined as 
$\dom(\sA) \eqdef \{ x \in E \, : \, \sA(x) \neq \emptyset \}$, 
and $\graph(\sA) \eqdef \{ (x,y) \in E\times E \, : \, y \in \sA(x) \}$.  
The operator $\sA$ is proper if $\dom(\sA)\neq\emptyset$. 
The operator $\sA$ is said to be monotone if 
$\forall x,x'\in \dom(\sA)$, $\forall y \in \sA(x), \forall y' \in \sA(x')$, it
holds that $\ps{y-y',x-x'}\geq 0$.  A proper monotone operator
$\sA$ is said maximal if its graph $\graph(\sA)$ is a maximal element
in the inclusion ordering among graphs of monotone operators. 

Denote by $I$ the identity operator, and by $\sA^{-1}$ the inverse of the
operator $\sA$, defined by the fact that $(x,y) \in \graph(\sA^{-1})
\Leftrightarrow (y,x) \in \graph(\sA)$.  It is well known that $\sA$ belongs to
the set $\maxmon$ of the maximal monotone operators on $E$ if
and only if, for all $\gamma > 0$, the so called resolvent operator $\sJ_\gamma
\eqdef ( I + \gamma \sA )^{-1}$ is a contraction defined on the whole space $ E$
(in particular, $\sJ_\gamma$ is single valued). We also know that when $\sA \in
\maxmon$, the closure $\cl({\dom}(\sA))$ of $\dom(\sA)$ is convex, and 
$\lim_{\gamma\to 0} \sJ_\gamma(x) = \Pi_{\cl(\dom(\sA))}(x)$, where 
$\Pi_S$ is the projector on the closed convex set $S$. 
It holds that $\sA(x)$ is closed and convex
for all $x \in \dom(\sA)$. We can therefore put $\sA_0(x) = \Pi_{\sA(x)}(0)$,
in other words, $\sA_0(x)$ is the minimum norm element of $\sA(x)$. Of
importance is the so called Yosida regularization of $\sA$ for $\gamma > 0$,
defined as the single-valued operator $\sA_\gamma = ( I - \sJ_\gamma ) /
\gamma$.  This is a $1/\gamma$-Lipschitz operator on $ E$ that satisfies 
$\sA_\gamma(x) \to \sA_0(x)$ and $\|\sA_\gamma(x)\| \uparrow \| \sA_0(x) \|$
for all $x\in \dom(\sA)$.  One can also check that $\sA_\gamma(x) \in
\sA(\sJ_\gamma(x))$ for all $x\in E$. 

A typical maximal monotone operator is the subdifferential $\partial f$ of a
function $f\in \Gamma_0$, the set of proper, convex, and lsc functions on $E$.
In this case, the resolvent $( I + \gamma\partial f)^{-1}$ for $\gamma > 0$ is
the well known proximity operator of $\gamma f$, and is denoted as
$\prox_{\gamma f}$. The Yosida regularization of $\partial f$ for $\gamma > 0$
coincides with the gradient of the so-called Moreau's envelope 
$f_\gamma(x) \eqdef  \min_w ( f(w) + \| w - x \|^2 / (2\gamma) )$ of $f$. \\

\subsection{Set valued integrals and random maximal monotone operators} 
\label{mon} 

Let $(\Xi, \mcG, \mu)$ be a probability space where the $\sigma$-field $\mcG$
is $\mu$-complete. For any Euclidean space $E$, denote as ${\mcB}(E)$ the Borel field
of $E$, and let $F : \Xi \rightrightarrows E$ be a set valued function
such that $F(s)$ is a closed set for each $s \in \Xi$.  The function $F$ is
said \emph{measurable} if $\{ s \, : \, F(s) \cap H \neq \emptyset \} \in
{\mcG}$ for any set $H \in {\mcB}(E)$.  An equivalent definition for the
mesurability of $F$ requires that the domain 
$\dom(F) \eqdef \{ s \in \Xi \, : \, F(s) \neq \emptyset \}$ of $F$ 
belongs to $\mcG$, and that there exists a sequence of measurable functions
$\varphi_n : \dom(F) \to E$ such that 
$F(s) = \cl{\{\varphi_n(s) \}_n}$ for all $s \in \dom(F)$ 
\cite[Chap.~3]{cas-val77} \cite{hia-um-77}. 

Assume now that $F$ is measurable and that $\mu(\dom(F)) = 1$. Given
$1 \leq p < \infty$, let ${\mathcal L}^p(\Xi, {\mcG}, \mu; E)$ be the 
Banach space of the ${\mcG}$-measurable functions $\varphi : \Xi \to E$ such that 
$\int \| \varphi \|^p d\mu < \infty$, and let 
\[
\Selec^p_F \eqdef 
\{ \varphi \in {\mathcal L}^p(\Xi, {\mcG}, \mu; E) \, : \, 
\varphi(s) \in F(s) \ \mu-\text{a.e.} \} \, .
\]
If $\Selec^1_F \neq \emptyset$, the function $F$ is said integrable. 
The \emph{selection integral} \cite{molchanov2006theory} of $F$ is the set 
\[
\int F d\mu \eqdef \cl{\left\{ \int_\Xi \varphi d\mu \ : \ 
  \varphi \in \Selec^1_F \right\}} .
\]
Now, consider a function $A : \Xi \to \maxmon$.  By the maximality of $A(s)$,
the graph $\graph(A(s))$ of $A(s)$ is a closed subset of $ E \times  E$
\cite{bre-livre73}.  For any $\gamma > 0$, denote by $J_{\gamma}(s, \cdot)
\eqdef  ( I + \gamma A(s) )^{-1}(\cdot)$  the resolvent of $A(s)$.  Assume that
the function $s \mapsto \graph(A(s))$ is measurable as a closed set-valued
$\Xi\rightrightarrows  E \times  E$ function.  As shown in
\cite[Ch.~2]{att-79}, this is equivalent to saying that the function $s \mapsto
J_{\gamma}(s, x)$ is measurable from $\Xi$ to $ E$ for any $\gamma > 0$ and any
$x \in  E$.  Observe that since $J_{\gamma}(s, x)$ is measurable in $s$
and continuous in $x$ (being non expansive), $J_\gamma : \Xi \times  E \to  E$
is ${\mcG} \otimes {\mcB(E)} / {\mcB(E)}$ measurable by Carath\'eodory's
theorem. 
Denoting by $D(s)$ the domain of $A(s)$, the measurability of 
$s \mapsto \graph(A(s))$ implies that the set-valued function 
$s \mapsto \cl(D(s))$ is measurable, which implies that the function
$s\mapsto d(x, D(s))$ is measurable for each $x\in E$, where $d(x,S)$ is the
distance between the point $x$ and the set $S$. 
Denoting as $A(s,x)$ the image of $x$ by the operator $A(s)$, 
the measurability of the set valued function $s\mapsto A(s,x)$ for each
$x\in E$ is another consequence of the measurability of 
$s \mapsto \graph(A(s))$. In particular, the function 
$s\mapsto A_0(s,x)$ is measurable for each $x\in E$, where 
$A_0(s,x) \eqdef \Pi_{A(s,x)}(0)$. 

The essential intersection $\mD$ of the domains $D(s)$ is defined as 
\cite{hu-these-77} 
\[
\mD \eqdef \bigcup_{G \in {\mcG} : \mu(G) = 0} \ 
\bigcap_{s \in \Xi \setminus G} D(s) \, , 
\]
in other words, 
$x \in \mD  \ \Leftrightarrow \ \mu(\{ s \, : \, x \in D(s) \}) = 1$.  Let us
assume that $\mD \neq \emptyset$, and  that the set-valued mapping $A(\cdot,x)$
is integrable for each $x \in \mD$. For all $x\in\mD$, we can define 
\[
\mA(x) \eqdef \int_\Xi A(s, x) \, \mu(ds) \, . 
\]
One can immediately see that the operator $\mA : \mD \rightrightarrows  E$ 
so defined is a monotone operator.

\subsection{Differential inclusion involving maximal monotone operators}

We now turn to the differential inclusions induced by maximal monotone
operators. Given $\sA \in \maxmon$ and $x_0 \in\dom(\sA)$, the DI 
$\dot \sx(t) \in - \sA(\sx(t))$ on $\bR_+$ with $\sx(0) = x_0$ has a unique
solution, \emph{i.e.}, a unique absolutely continuous mapping 
$\sx : \bR_+ \to E$ such that $\sx(0) = x_0$, and 
$\dot \sx(t) \in - \sA(\sx(t))$ for almost all $t > 0$. 

Consider the map $\Phi : \dom(\sA) \times \bR_+ \to \dom(\sA)$, 
$(x_0, t) \mapsto \sx(t)$ where $\sx(t)$ is the DI solution with initial value
$x_0$.  Then, $\Phi$ satisfies $\| \Phi(x,t) - \Phi(y,t)  \| \leq \| x - y \|$
for all $t\geq 0$ and all $x,y\in\dom(\sA)$. Since $E$ is complete, $\Phi$ can be extended
to a map from $\cl(\dom(\sA)) \times \bR_+$ to $\cl(\dom(\sA))$. 
This extension that we still denote as $\Phi$ is a semiflow on 
$\cl(\dom(\sA)) \times \bR_+$, being a continuous 
$\cl(\dom(\sA)) \times \bR_+ \to \cl(\dom(\sA))$ function 
satisfying $\Phi(\cdot, 0) = I$, and $\Phi(x, t+s) = \Phi(\Phi(x,s),t)$ for
each $x \in \cl(\dom(\sA))$, and $t,s\geq 0$. 

The set of zeros $Z(\sA) \eqdef  \{ x \in \dom(\sA)\, : \, 0 \in \sA(x) \}$ of
$\sA$ is a closed convex set which coincides with the set of equilibrium points 
$\{ x \in \cl(\dom(\sA)) \, : \, \forall t \geq 0, \Phi(x,t) = x \}$ of
$\Phi$.  The trajectories $\Phi(x,\cdot)$ of the semiflow do not necessarily
converge to $Z(\sA)$ (see \cite{pey-sor-10} for a counterexample). However, the
ergodic theorem for the semiflows generated by the elements of $\maxmon$ states
that if $Z(\sA) \neq \emptyset$, then for each $x \in \cl(\dom(\sA))$, 
the averaged function 
\[
\begin{array}{cccccl}
\overline\Phi &:& \cl(\dom(\sA)) \times \bR_+ &\longrightarrow& 
                                          \cl(\dom(\sA)) \\ 
& &(x,t) &\longmapsto& \displaystyle{\frac 1t \int_0^t \Phi(x,s) \, ds} 
\end{array} 
\]
(with $\overline{\Phi(\cdot, 0)} = \Phi(\cdot, 0)$), converges to an element 
of $Z(\sA)$ as $t\to\infty$. 
The convergence of the trajectories of the semiflow itself to an element of 
$Z(\sA)$ is ensured when $\sA$ is demipositive \cite{bru-75}. An operator 
$\sA\in\maxmon$ is said demipositive if there exists $w\in Z(\sA)$ such that 
for every sequence $( (u_n,v_n) \in \graph(\sA))$ such that $(u_n)$ converges 
to $u$, and such that $(v_n)$ is bounded, 
\[
\ps{u_n - w, v_n} \xrightarrow[n\to\infty]{} 0 \quad \Rightarrow \quad
u \in Z(\sA) .
\]
Under this condition and if $Z(\sA) \neq \emptyset$, then for all 
$x \in \cl(\dom(\sA))$, $\Phi(x,t)$ converges as $t\to\infty$ to an 
element of $Z(\sA)$. 

We recall some of the most important notions related with the dynamical
behavior of the semiflow $\Phi$. Denote as $\cM(E)$ the space of probability
measures on $E$ equipped with its Borel $\sigma$-field $\mcB(E)$. An element 
$\pi \in \cM(E)$ is called an
invariant measure for $\Phi$ if $\pi = \pi \Phi(\cdot, t)^{-1}$ for every 
$t > 0$. The set of invariant measures for $\Phi$ will be denoted $\cI(\Phi)$.
The limit set of the trajectory $\Phi(x,\cdot)$ of the semiflow $\Phi$ 
starting at $x$ is the set
\[
L_{\Phi(x,\cdot)} \eqdef \bigcap_{t\geq 0} \cl\left(\Phi(x, [t,\infty))\right)
\]
of the limits of the convergent subsequences $(\Phi(x, t_k))_k$ as 
$t_k\to\infty$. A point $x\in \cl(\dom\sA)$ is said recurrent if 
$x \in L_{\Phi(x,\cdot)}$. The Birkhoff center $\text{BC}_\Phi$ of $\Phi$ is 
\[
\text{BC}_\Phi \eqdef \cl{\{ x\in \cl(\dom\sA) \, : \, 
     x \in L_{\Phi(x,\cdot)} \}} \, , 
\]
\emph{i.e.}, the closure of the set of recurrent points of $\Phi$. 
The celebrated Poincar\'e's recurrence theorem 
\cite[Th.~II.6.4 and Cor.~II.6.5]{devries93} says that the support of any 
$\pi\in\cI(\Phi)$ is a subset of $\text{BC}_\Phi$.

\begin{proposition}
\label{prop-Z(A)}
Assume that $Z(\sA) \neq\emptyset$, and let $\pi \in \cI(\Phi)$. If $\sA$ is 
demipositive, then $\support(\pi) \subset Z(\sA)$. 
If $\pi$ has a first moment, then, whether $\sA$ is demipositive or not, 
\[
\int x\, \pi(dx) \in Z(\sA) \, . 
\]
\end{proposition}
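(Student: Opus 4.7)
For part (1), the plan is to combine Poincar\'e's recurrence theorem stated just before the proposition with the demipositivity-driven convergence recalled in the excerpt. Poincar\'e's theorem gives $\support(\pi) \subset \text{BC}_\Phi$, so it suffices to show that every recurrent point lies in $Z(\sA)$. I would take a recurrent $x$, i.e.\ some $x \in L_{\Phi(x,\cdot)}$, and pick $t_k \to \infty$ with $\Phi(x, t_k) \to x$; by the convergence result for demipositive operators, $\Phi(x, t)$ admits a limit $x_\infty \in Z(\sA)$ as $t \to \infty$, so by uniqueness of the limit $x = x_\infty \in Z(\sA)$. Since $Z(\sA)$ is closed, taking closure gives $\text{BC}_\Phi \subset Z(\sA)$, which concludes the first assertion.

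For part (2), I would rely on the semiflow ergodic theorem recalled in the excerpt: the Ces\`aro average $\overline{\Phi}(x, t) = t^{-1} \int_0^t \Phi(x, s)\, ds$ converges as $t\to\infty$ to some $\Phi_\infty(x) \in Z(\sA)$ for every $x \in \cl(\dom(\sA))$. Set $m \eqdef \int x\, \pi(dx)$, which is well-defined by the first-moment hypothesis. The invariance identity $\pi = \pi \circ \Phi(\cdot, s)^{-1}$ gives $m = \int \Phi(x, s)\, \pi(dx)$ for every $s \geq 0$; averaging in $s$ over $[0,t]$ and applying Fubini then yields
\[
m = \int \overline{\Phi}(x, t)\, \pi(dx).
\]
I would pass to the limit $t\to\infty$ by dominated convergence. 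Fixing any $x^* \in Z(\sA)$, non-expansivity of the semiflow gives $\|\Phi(x,s)\| \leq \|x\| + 2\|x^*\|$, hence $\|\overline{\Phi}(x,t)\| \leq \|x\| + 2\|x^*\|$ for every $t > 0$, which is $\pi$-integrable by hypothesis. Dominated convergence then yields $m = \int \Phi_\infty(x)\, \pi(dx)$, and since $\Phi_\infty$ is $Z(\sA)$-valued with $Z(\sA)$ closed and convex, the mean $m$ lies in $Z(\sA)$.

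The main subtleties I anticipate are minor bookkeeping rather than a real obstacle: the support of $\pi$ is automatically contained in $\cl(\dom(\sA))$ because Poincar\'e's recurrence theorem already forces $\support(\pi) \subset \text{BC}_\Phi \subset \cl(\dom(\sA))$, so the ergodic theorem applies $\pi$-a.e.; the joint measurability of $(x, s) \mapsto \Phi(x, s)$ required for Fubini follows from continuity of the semiflow in both arguments; and the measurability of $\Phi_\infty$ needed for the final integral is automatic, being the pointwise limit of the continuous maps $x \mapsto \overline{\Phi}(x, t)$.
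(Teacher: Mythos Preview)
Your proof is correct. Part~(1) is identical in spirit to the paper's, which simply asserts that $\text{BC}_\Phi = Z(\sA)$ under demipositivity and invokes Poincar\'e; you have merely unpacked why every recurrent point lies in $Z(\sA)$.

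For part~(2), your argument is genuinely simpler than the paper's. The paper establishes uniform integrability of the family $\{\overline{\Phi}(\cdot,t):t>0\}$ under $\pi$ by first noting that $\{\Phi(\cdot,t):t\geq 0\}$ is identically distributed (invariance) hence uniformly integrable, and then pushing this through the time average via Tonelli. You bypass all of this by exploiting the non-expansiveness of the semiflow together with the existence of a fixed point $x^*\in Z(\sA)$ to obtain a single $\pi$-integrable dominating function $\|x\|+2\|x^*\|$, after which ordinary dominated convergence suffices. Your route is shorter and more transparent; the paper's uniform-integrability argument would remain valid in settings where no such pointwise bound is available (e.g.\ without a known equilibrium or without non-expansiveness), but those features are part of the standing hypotheses here, so nothing is lost.
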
 
\begin{proof} 
When $\sA$ is demipositive, $Z(\sA)$ coincides straightforwardly with
$\text{BC}_\Phi$, and the first inclusion follows from Poincar\'e's recurrence 
theorem. 

To show the second result, we start by proving that 
$\{\overline \Phi(\cdot, t)\, : \, t>0\}$ is uniformly integrable as a 
family of random variables in $(E,\mcB(E),\pi)$. Let $\varepsilon>0$. Since 
the family $\{\Phi(\cdot, t)\, : \, t\geq 0\}$ is identically distributed, 
it is uniformly integrable, thus, there exists $\eta_\varepsilon>0$ such that 
$\sup_t \int_S\|\Phi(x,t)\|\, \pi(dx) \leq \varepsilon$ for all $S\in\mcB(E)$ 
satisfying $\pi(S)\leq \eta_\varepsilon$. By Tonelli's theorem, 
\[
\sup_{t>0} \int_S \|\overline \Phi(x,t)\| \, \pi(dx) \leq 
\sup_{t>0} \frac 1t \int_0^t \int_S \|\Phi(x,s)\| \, \pi(dx)ds 
\leq \varepsilon\, , 
\] 
which shows that, indeed, $\{\overline \Phi(\cdot,t):t>0\}$ is uniformly 
integrable \cite[Prop.~II-5-2]{neveu1965bases}. By the ergodic theorem
for semiflows generated by elements of $\maxmon$, there exists a 
function $f : \cl(\dom\sA) \to Z(\sA)$ such that 
$\overline \Phi(\cdot,t) \to f$ as $t\to\infty$. Since 
\[
\int x \, \pi(dx) = \int \overline \Phi(x,t) \, \pi(dx) \quad 
\text{for all } t \geq 0 \, , 
\]
we can make $t\to\infty$ and use the uniform integrability of 
$\{\overline \Phi(\cdot,t):t>0\}$ to obtain that 
$\int \| f \| \, d\pi < \infty$, and 
$\int x \, \pi(dx) = \int f(x) \, \pi(dx)$. The result follows from the 
closed convexity of $Z(\sA)$. 
\end{proof} 

\subsection{Presentation of the stochastic Forward-Backward algorithm} 
\label{sto-fb} 

Let $B:\Xi \times  E \to  E$ be a mapping such that $B(\cdot, x)$ is 
${\mcG}$-measurable for all $x\in E$, and $B(s,\cdot)$ is continuous 
and monotone (seen as a single-valued operator) on $ E$. By Carath\'eodory's 
theorem, $B$ is ${\mcG}\otimes {\mcB}( E)$-measurable. 
Furthermore, since $B(s,\cdot)$ is continuous on $ E$, this monotone operator
is maximal \cite[Prop.~2.4]{bre-livre73}. We also assume that the mapping $B(\cdot, x): \Xi \to  E$ is integrable 
for all $x\in E$, and we set $\mB(x) \eqdef \int B(s,x)\mu(ds)$. Note that 
$\dom \mB =  E$.  

Let $(\xi_n)$ be an i.i.d.~sequence of random variables from a probability
space $(\Omega, {\mcF}, \bP)$ to $(\Xi, \mcG)$ with the distribution $\mu$. Let
$x_0$ be a $E$-valued random variable with probability law $\nu$, and assume
that $x_0$ and $(\xi_n)$ are independent. Starting from $x_0$, our purpose is to
study the behavior of the iterates 
\begin{equation}
\label{fb-sto} 
x_{n+1} = 
J_\gamma (\xi_{n+1}, x_n - \gamma B(\xi_{n+1}, x_n) ) , 
\quad n \in \bN \, , 
\end{equation} 
for a given $\gamma > 0$, where we recall the notation 
$J_\gamma(s,\cdot)\eqdef (I+\gamma A(s))^{-1}(\cdot)$ for every $s\in \Xi$.  

In the deterministic case where the functions $A(s,\cdot)$ and $B(s,\cdot)$ are
replaced with deterministic maximal monotone operators $\sA(\cdot)$ and
$\sB(\cdot)$, with $\sB$ still being assumed single-valued with 
$\dom(\sB) = E$, the algorithm coincides with the well-known Forward-Backward
algorithm \eqref{fb-deterministe}.  Assuming that $\sB$ is so-called cocoercive
and that $\gamma$ is not too large, the iterates given by
(\ref{fb-deterministe}) are known to converge to an element of $Z(\sA+\sB)$,
provided this set is not empty \cite[Th.~25.8]{bau-com-livre11}.  In the
stochastic case who is of interest here, this convergence does not hold in
general.  Nonetheless, we shall show below that in the long run, the
probability that the iterates or their empirical means stay away of
$Z(\mA+\mB)$ is small when $\gamma$ is close to zero.

\section{Assumptions and main results} 
\label{sec-results}

We first observe that the process $(x_n)$ described by Eq.~\eqref{fb-sto} is
a homogeneous Markov chain whose transition kernel $P_\gamma$ is defined by the
identity
\begin{equation}
\label{Pgamma} 
P_\gamma(x,f) = \int f( J_\gamma(s, x - \gamma B(s,x))) \, \mu(ds) \, , 
\end{equation} 
valid for each measurable and positive function $f$. 
The kernel $P_\gamma$ and the initial measure $\nu$ determine completely the 
probability distribution of the process $(x_n)$, seen as a 
$(\Omega, \mcF) \to (E^\bN, \mcB(E)^{\otimes \bN})$ random variable. We
shall denote this probability distribution on $(E^\bN, \mcB(E)^{\otimes \bN})$ 
as $\bP^{\nu, \gamma}$. We denote by $\bE^{\nu,\gamma}$ the corresponding 
expectation. When $\nu=\delta_a$ for some $a\in E$, we shall prefer the 
notations $\bP^{a,\gamma}$ and $\bE^{a,\gamma}$ to $\bP^{\delta_a,\gamma}$ and 
$\bE^{\delta_a,\gamma}$. From nom on, $(x_n)$ will denote the canonical process on the canonical space $(E^\bN, \mcB(E)^{\otimes \bN})$.

We denote as $\mcF_n$ the sub-$\sigma$-field of $\mcF$ generated by the family 
$\{ x_0, \{\xi_{k}^\gamma:1\leq k\leq n\}\}$, and we write 
$\bE_n[\cdot] = \bE[\cdot \, | \, \mcF_n]$ for $n\in \bN$. 

In the remainder of the paper, $C$ will always denote a positive constant
that does not depend on the time $n$ nor on $\gamma$. This constant may change
from a line of calculation to another. In all our derivations, $\gamma$ will
lie in the interval $(0,\gamma_0]$ where $\gamma_0$ is a fixed constant 
which is chosen as small as needed. 

\subsection{Assumptions}

\begin{assumption} 
\label{A0bnd} 
For every compact set $\cK\subset E$, there exists $\varepsilon>0$ 
such that
\[
\sup_{x\in\cK \cap \mD} 
\int \| A_0(s, x) \|^{1+\varepsilon} \, \mu(ds) 
< \infty .  
\]
\end{assumption} 

\begin{assumption} 
\label{Amax} The monotone operator $\mA$ is maximal. 
\end{assumption} 

\begin{assumption} 
\label{BBnd} 
For every compact set $\cK\subset E$, there exists $\varepsilon > 0$ such that 
\[
\sup_{x\in\cK} \int \| B(s, x) \|^{1+\varepsilon} \, \mu(ds) 
  < \infty \, . 
\]
\end{assumption} 

The next assumption will mainly lead to the tightness of the invariant
measures mentioned in the introduction. 

We know that a point $x_\star$ is an element of $Z(\mA+\mB)$ if
there exists $\varphi \in \Selec^1_{A(\cdot, x_\star)}$ such that 
$\int \varphi(s) \, \mu(ds) + \int B(s,x_\star) \, \mu(ds) =0$.  
When $B(\cdot,x_\star) \in {\mathcal L}^{2}(\Xi, {\mcG}, \mu;  E)$, and 
when the above function $\varphi$ can be chosen in 
${\mathcal L}^{2}(\Xi, {\mcG}, \mu;  E)$, we say that such a zero admits a 
${\mathcal L}^{2}$ representation $(\varphi, B)$. In this case, we define 
\begin{align}
\psi_\gamma(x) &:= 
 \int \Bigl\{ \ps{A_\gamma(s,x - \gamma B(s,x)) -\varphi(s), 
  J_\gamma(s, x - \gamma B(s,x)) - x_\star} \nonumber \\
 & 
\ \ \ \ \ \ \ \ \ \ \ \ \ \ \ \ \ \ \ \ \ \ \ \ 
\ \ \ \ \ \ \ \ \ \ \ \ \ \ \ \ \ 
  + \ps{B(s, x) - B(s,x_\star), x - x_\star}  \Bigr\} \, \mu(ds) \nonumber \\ 
&\phantom{=} 
 + \gamma \int \| A_\gamma(s, x - \gamma B(s,x)) \|^2 \mu(ds) 
 - 6\gamma \int \| B(s,x) - B(s,x_\star) \|^2 \mu(ds) \, , 
\label{psig} 
\end{align} 
where 
\[
A_\gamma(s,x) \eqdef \frac{x - J_\gamma(s,x)}{\gamma} 
\] 
is the Yosida regularization of $A(s,x)$ for $\gamma > 0$. 

\begin{assumption} 
\label{psi-increase} 
There exists $x_\star \in Z(\mA+\mB)$ admitting a ${\mathcal L}^{2}$ 
representation $(\varphi, B)$. The function 
$\Psi(x) \eqdef \inf_{\gamma \in(0,\gamma_0]}\psi_\gamma(x)$ satisfies one of 
the following properties: 

\begin{enumerate}[label=(\alph*)] 


\item\label{psi-sous-lin} 
$\displaystyle{\liminf_{\|x\|\to\infty} \frac{\Psi(x)}{\| x\|}  > 0}$. 

\item\label{psi-linear} 
$\displaystyle{\frac{\Psi(x)}{\| x\|} \xrightarrow[\|x\|\to\infty]{} \infty}$.  

\item\label{psi-quad} 
$\displaystyle{\liminf_{\|x\|\to\infty} \frac{\Psi(x)}{\| x\|^2}  > 0}$. 

\end{enumerate} 
\end{assumption} 

Let us comment these assumptions. 

Assumptions \ref{A0bnd} and \ref{BBnd} are moment assumptions on $A_0(s,x)$ 
and $B(s,x)$ that are usually easy to check. Assumption  \ref{A0bnd} implies that for
every $x\in \mD$, $A_0(\,.\,,x)$ is integrable. Therefore, $A(\,.\,,x)$ is integrable.
This implies that the domain of the selection integral $\mA$ coincides with $\mD$.

Conditions where Assumption~\ref{Amax} are satisfied can be found in
\cite[Chap.~II.6]{bre-livre73} in the case where $\mu$ has a finite support, 
and in \cite[Prop.~3.1]{bia-hac-16} in other cases. When 
$A(s)$ is the subdifferential of a function $g(s,\cdot)$ belonging to 
$\Gamma_0$, the maximality of $\mA$ is established if we can exchange 
the expectation of $g(\xi_1,x)$ w.r.t.~$\xi_1$ with the subdifferentiation 
w.r.t.~$x$, in which case $\mA$ would be equal to $\partial G$, where 
$G(x) = \int g(s,x) \, \mu(ds)$. This problem is dealt with 
in~\cite{wal-wet-69} (see also Sec.~\ref{subdif} below). 

The first role of Assumption~\ref{psi-increase} is to ensure the tightness of 
the invariant measures of the kernels $P_\gamma$, as mentioned in the
introduction. Beyond the tightness, this assumption controls the asymptotic
behavior of functionals of the iterates with a prescribed growth condition at
infinity. Assumption~\ref{psi-increase} will be specified and commented at
length in Section~\ref{sec-tightness}. 
\medskip

Regarding the domains of the operators $A(s)$, two cases will be considered,
according to whether these domains vary with $s$ or not.
We shall name these two
cases the ``common domain'' case and the  ``different domains'' case 
respectively.
In the common domain case, our assumption is therefore:
\begin{assumption}[Common domain case]
  \label{common}
The set-valued function $s\mapsto D(s)$ is $\mu$-almost
everywhere constant.
\end{assumption}
In the common domain case, Assumptions \ref{A0bnd}--\ref{psi-increase} 
will be sufficient to state our results, whereas in the different domains case,
three supplementary assumptions will be needed:
\begin{assumption} [Different domains case]
\label{linreg} 
$\displaystyle{
\forall x\in E, \ \int d(x,D(s))^2 \, \mu(ds) \geq C \bs d(x)^2}$, 
where $\bs d(\cdot)$ is the distance function to~$\mD$. 
\end{assumption} 

\begin{assumption} [Different domains case]
\label{JBnd-dif} 
For every compact set $\cK\subset E$, there exists $\varepsilon > 0$ such that 
\[
\sup_{\gamma\in (0, \gamma_0], x \in \cK} 
\frac 1{\gamma^{1+\varepsilon}}
\int \| J_\gamma(s, x) - \Pi_{\cl(D(s))}(x) \|^{1+\varepsilon}  
 \, \mu(ds)  < \infty \, . 
\]
\end{assumption} 

\begin{assumption} [Different domains case]
\label{JBgrow} 
For all $\gamma\in (0,\gamma_0]$ and all $x\in E$, 
\[
\int \left( \frac{\| J_\gamma(s, x) - \Pi_{\cl(D(s))}(x) \|}{\gamma}
 + \| B(s, x) \| \right) \, \mu(ds)  
 \leq C ( 1 + \psi_\gamma(x)) \, . 
\]

\end{assumption} 

Assumption~\ref{linreg} is rather mild, and is easy to illustrate in the
case where $\mu$ is a finite sum of Dirac measures.  Following
\cite{bauschke1999strong}, we say that a finite collection of closed and convex
subsets $\{\mC_1,\dots,\mC_m\}$ over $E$ is \emph{linearly regular} if 
there exists $\kappa>0$ such that for every $x$, 
\[ 
\max_{i=1\dots m} d(x,\mC_i)\geq \kappa d(x,\mC) ,  
  \quad \text{where}\ \mC=\bigcap_{i=1}^m \mC_i\, , 
\] 
and where implicitly $\mC\neq \emptyset$. Sufficient conditions for a 
collection of sets to satisfy the above condition can be found in 
\cite{bauschke1999strong} and the references therein.

We know that when $\gamma\to 0$, $J_\gamma(s, x)$ converges to 
$\Pi_{\cl(D(s))}(x)$ for each $(s,x)$. Assumptions~\ref{JBnd-dif} 
and~\ref{JBgrow} add controls on the convergence rate. The instantiations of
these assumptions in the case of the stochastic proximal gradient algorithm
will be provided in Section~\ref{subdif} below.

\subsection{Main result} 

\begin{lemma}
\label{A+B:max} 
Let Assumptions~\ref{Amax} and \ref{BBnd} hold true. Then, the monotone 
operator $\mA + \mB$ is maximal. 
\end{lemma}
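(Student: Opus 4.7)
The plan is to reduce the claim to the standard sum theorem for maximal monotone operators, which states that if $\sA_1, \sA_2 \in \maxmon$ with $\dom(\sA_2) = E$, then $\sA_1 + \sA_2 \in \maxmon$ (see \cite[Cor.~24.4]{bau-com-livre11}). Since $\mA \in \maxmon$ by Assumption~\ref{Amax}, the whole argument reduces to showing that $\mB \in \maxmon$ with $\dom(\mB) = E$. The full domain is already noted right after the definition of $\mB$, and monotonicity of $\mB$ is immediate from the monotonicity of each $B(s,\cdot)$ and linearity of the integral. Since $\mB$ is single-valued with full domain, by \cite[Prop.~2.4]{bre-livre73} already cited in the paper, it suffices to prove that $\mB$ is continuous (even hemicontinuity would do).

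The key step is therefore: prove continuity of $\mB : E \to E$ using Assumption~\ref{BBnd}. Let $x_n \to x$ in $E$, and fix a compact set $\cK$ containing $\{x_n\}\cup\{x\}$. By Assumption~\ref{BBnd}, there exists $\varepsilon>0$ with
\[
\sup_n \int \|B(s,x_n)\|^{1+\varepsilon}\,\mu(ds) < \infty.
\]
This uniform $(1+\varepsilon)$-moment bound makes the family $\{B(\cdot,x_n)\}_n$ uniformly integrable. Since $B(s,\cdot)$ is continuous for each $s$, $B(s,x_n) \to B(s,x)$ pointwise in $s$. The Vitali convergence theorem then yields $\int B(s,x_n)\mu(ds) \to \int B(s,x)\mu(ds)$, i.e.\ $\mB(x_n) \to \mB(x)$.

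With $\mB$ continuous monotone on the whole space $E$, it belongs to $\maxmon$. Applying the sum theorem to the pair $(\mA,\mB)$, which is allowed because $\dom(\mB) = E$ trivially intersects the interior of $\dom(\mA)$ whenever the latter is nonempty (and the full-domain case requires no qualification at all), we conclude that $\mA+\mB$ is maximal monotone.

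I do not foresee any serious obstacle: the only substantive point is the passage from pointwise continuity of $s$-sections to continuity of the integrated operator, which is routine once the $(1+\varepsilon)$ moment bound of Assumption~\ref{BBnd} is in hand. The rest is invocation of two classical results from \cite{bre-livre73} and \cite{bau-com-livre11}.
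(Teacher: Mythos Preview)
Your proof is correct and follows essentially the same approach as the paper's: show $\mB$ is continuous (hence maximal by \cite[Prop.~2.4]{bre-livre73}) and then apply the sum theorem using $\dom(\mB)=E$. The paper's proof simply asserts that Assumption~\ref{BBnd} implies continuity of $\mB$, whereas you spell out the uniform-integrability/Vitali argument behind that assertion; otherwise the two proofs coincide (the paper cites \cite[Cor.~2.7]{bre-livre73} for the sum theorem where you cite \cite[Cor.~24.4]{bau-com-livre11}, but these are the same result).
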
 
\begin{proof} 
Assumption~\ref{BBnd} implies that the monotone operator $\mB$ is
continuous on $ E$. Therefore, $\mB$ is maximal
\cite[Prop.~2.4]{bre-livre73}.  The maximality of $\mA + \mB$ follows, since
$\mA$ is maximal by Assumption~\ref{Amax}, and $\mB$ has a full domain
\cite[Cor.~2.7]{bre-livre73}. 
\end{proof}

Note that $\dom(\mA + \mB) = \mD$.  In the remainder of the paper, we denote as
$\Phi : \cl(\mD) \times \bR_+ \to \cl(\mD)$ the semiflow produced by the DI
$\dot\sx(t) \in - (\mA + \mB)(\sx(t))$. Recall that $\cI(\Phi)$ is the set of
invariant measures for the semiflow $\Phi$. 

We also write 
\[
\bar x_n \eqdef \frac{1}{n+1} \sum_{k=0}^n x_k \, .
\]

We now state our main theorem. 
\begin{theorem}
\label{the:CV}
Let Assumptions \ref{A0bnd}, \ref{Amax},  \ref{BBnd}, and 
\ref{psi-increase}--\ref{psi-sous-lin} be satisfied.
Moreover, assume that either Assumption~\ref{common} 
or Assumptions \ref{linreg}--\ref{JBgrow} are satisfied.

Then, $\cI(\Phi) \neq\emptyset$. Let $\nu\in\cM(E)$ be with a 
finite second moment, and let 
$\mU \eqdef \bigcup_{\pi \in \cI(\Phi)} \support(\pi)$. Then, for all 
$\varepsilon > 0$, 
\begin{equation}
\label{cvg:support}
\limsup_{n\to\infty} 
\frac 1{n+1}\sum_{k=0}^n \bP^{\nu,\gamma}( d(x_k,\mU)>\varepsilon)
 \xrightarrow[\gamma\to 0]{}0\,. 
\end{equation}
In particular, if the operator $\mA + \mB$ is demipositive, then 
\begin{equation}
\label{cvg-1/2pos} 
\limsup_{n\to\infty} 
\frac 1{n+1}\sum_{k=0}^n 
 \bP^{\nu,\gamma}\left( d(x_k,Z(\mA+\mB))>\varepsilon\right)
 \xrightarrow[\gamma\to 0]{}0\,. 
\end{equation}
Moreover, the set $\{ \pi \in \cI(\Phi) \, : \, \pi(\Psi) < \infty \}$ is not
empty. Let $N'\in\bN^*$, and let $f : E \to \bR^{N'}$ be continuous. Assume 
that there exists $M\geq 0$ and $\varphi:\bR^{N'} \to\bR_+$ such that 
$\lim_{\|a\|\to\infty}\varphi(a)/{\|a\|}=\infty$, and
\[
\forall a\in E,\ \varphi(f(a))\leq M(1+\Psi(a))\, . 
\]
Then, for all $n\in\bN$, $\gamma\in (0,\gamma_0]$, the r.v.
$$
F_n \eqdef \frac 1{n+1}\sum_{k=0}^n f(x_k) 
$$
is $\bP$-integrable, and satisfies for all $\varepsilon>0$,
\begin{align}
&\limsup_{n\to\infty}
\bP^{\nu,\gamma}\left(d\left(F_n,\cS_f\right)\geq \varepsilon\right)
  \xrightarrow[\gamma\to 0]{}0\, ,    \label{P:CVS} \\ 
&\limsup_{n\to\infty}\ d\left(\bE^{\nu,\gamma}(F_n),\cS_f\right)
       \xrightarrow[\gamma\to 0]{}0\,.   
\label{E:CVS} 
\end{align}
where $\cS_f\eqdef\{\pi(f)\,:\, \pi\in\cI(\Phi) \}$. 
In particular, if $f(x) = x$, and if 
Assumption \ref{psi-increase}--\ref{psi-linear} is satisfied, then 
\begin{align}
&\limsup_{n\to\infty}\ 
\bP^{\nu,\gamma}\left(d\left(\bar x_n , Z(\mA+\mB) \right)\geq 
      \varepsilon\right)\xrightarrow[\gamma\to 0]{}0\, , \label{Pbarx} \\ 
&\limsup_{n\to\infty}\ d\left(\bE^{\nu,\gamma}(\bar x_n), Z(\mA+\mB)\right)
      \xrightarrow[\gamma\to 0]{}0\,. 
\label{Ebarx} 
\end{align}
\end{theorem}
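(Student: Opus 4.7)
The plan is to realize the left-hand sides of \eqref{cvg:support}--\eqref{Ebarx} as Cesàro averages of marginal laws of the Markov chain and to identify their joint limits in $(n,\gamma)$ as invariant measures for $\Phi$. Write
\[
\bar\nu_{n,\gamma} \eqdef \frac{1}{n+1}\sum_{k=0}^n \nu P_\gamma^k \, ,
\]
so that \eqref{cvg:support} reads $\bar\nu_{n,\gamma}(\{d(\cdot,\mU)>\varepsilon\}) \to 0$. Three ingredients will do most of the work: (i) a Lyapunov-type bound $\sup_{\gamma\in(0,\gamma_0]}\sup_n \bar\nu_{n,\gamma}(\Psi) \leq C$ issued from Assumption~\ref{psi-increase}, whose coercivity in~\ref{psi-sous-lin} forces tightness of $\{\bar\nu_{n,\gamma}\}$; (ii) the Feller property of $P_\gamma$, which via Krylov--Bogoliubov and (i) provides invariant measures $\pi_\gamma\in\cI(P_\gamma)$ that form a tight family with $\pi_\gamma(\Psi)\leq C$; and (iii) the narrow convergence, uniformly on compact time intervals, of the interpolated process $\sx_\gamma$ to the solution of the DI \eqref{eq:di-intro} proved in Section~\ref{sec-apt}.

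With these in hand, $\cI(\Phi)\neq\emptyset$ and \eqref{cvg:support} should follow from a single diagonal extraction. Assuming \eqref{cvg:support} fails, pick $\gamma_j\to 0$ and $n_j\to\infty$ with $n_j\gamma_j\to\infty$ such that $\mu_j\eqdef\bar\nu_{n_j,\gamma_j}$ satisfies $\mu_j(\{d(\cdot,\mU)>\varepsilon\})\geq\delta$, and by (i) extract further to obtain $\mu_j\to\pi$ narrowly. The crux is $\pi\in\cI(\Phi)$. For $t>0$, bounded continuous $f$ on $\cl(\mD)$, and $k_j=\lfloor t/\gamma_j\rfloor$, the telescoping identity
\[
\mu_j P_{\gamma_j}^{k_j} - \mu_j = \frac{1}{n_j+1}\sum_{\ell=0}^{k_j-1} \bigl(\nu P_{\gamma_j}^{n_j+1+\ell} - \nu P_{\gamma_j}^{\ell}\bigr)
\]
has total variation bounded by $2k_j/(n_j+1)\to 0$. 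Simultaneously, the path-space result of Section~\ref{sec-apt} applied to the chain started from $\mu_j$ yields $\mu_j P_{\gamma_j}^{k_j}(f) \to \pi(f\circ\Phi(\cdot,t))$, since after linear interpolation the time-$t$ marginal converges to that of the DI issued from $\pi$. Combining, $\pi = \pi \Phi(\cdot,t)^{-1}$ for every $t\geq 0$, so $\pi\in\cI(\Phi)$ and $\support(\pi)\subset\mU$; testing against a continuous $[0,1]$-valued cutoff of $\{d(\cdot,\mU)>\varepsilon\}$ then contradicts $\mu_j(\cdot)\geq\delta$. The same argument with $\mu_j=\pi_{\gamma_j}$ produces $\cI(\Phi)\neq\emptyset$ and, by a Portmanteau step on a lower-semicontinuous minorant of $\Psi$, populates $\{\pi\in\cI(\Phi):\pi(\Psi)<\infty\}$.

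The remaining assertions follow from refinements. Statement \eqref{cvg-1/2pos} is immediate from Proposition~\ref{prop-Z(A)}, which in the demipositive case gives $\mU\subset Z(\mA+\mB)$. For \eqref{P:CVS}--\eqref{E:CVS}, the bound $\varphi\circ f\leq M(1+\Psi)$ combined with (i) bounds $\bar\nu_{n,\gamma}(\varphi\circ f)$ uniformly; since $\varphi(a)/\|a\|\to\infty$, this upgrades to uniform integrability of $\{f(x_k)\}$ under $\bar\nu_{n,\gamma}$. Repeating the contradiction scheme after truncating $f$, then sending the truncation to infinity via uniform integrability, yields \eqref{P:CVS}, and \eqref{E:CVS} follows directly from uniform integrability of the first moments. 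The special case \eqref{Pbarx}--\eqref{Ebarx} corresponds to $f(x)=x$ with $\varphi=\Psi$, whose growth hypothesis is exactly Assumption~\ref{psi-increase}\ref{psi-linear}, and Proposition~\ref{prop-Z(A)} further places $\cS_{\mathrm{id}}$ inside $Z(\mA+\mB)$.

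The main obstacle lies in the simultaneous limit $n_j\to\infty$, $\gamma_j\to 0$, which requires Markov ergodicity in $n$ and DI approximation in $\gamma$ to interact through the horizon $k_j=\lfloor t/\gamma_j\rfloor$. Two delicate points arise: first, transferring the path-space convergence of Section~\ref{sec-apt}, typically proven starting from a Dirac initial condition, to arbitrary narrowly convergent starting laws $\mu_j\to\pi$ supported near $\cl(\mD)$; second, the fact that $\Psi=\inf_\gamma \psi_\gamma$ is only upper-semicontinuous, so inheriting $\pi(\Psi)<\infty$ and the moment bound $\bar\nu_{n,\gamma}(\varphi\circ f)\leq C$ across narrow limits calls for a dedicated lower-semicontinuous surrogate for $\Psi$ that retains the coercivity from Assumption~\ref{psi-increase}.
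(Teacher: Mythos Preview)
Your approach shares the same three pillars as the paper's --- a Lyapunov/drift inequality, tightness of the resulting family of measures, and the narrow convergence of the interpolated process to the DI solution --- but the architecture differs. The paper does not argue directly on the Ces\`aro averages $\bar\nu_{n,\gamma}$. Instead it proves a drift inequality of the form
\[
P_\gamma(x,\|\cdot-x_\star\|^2)\ \leq\ \|x-x_\star\|^2 - \tfrac{\gamma}{2}\,\psi_\gamma(x) + C\gamma^2
\]
(Lemma~\ref{fb-PH}), uses it with $V=\|\cdot-x_\star\|^2$, $Q=\Psi$ inside the black-box Proposition~\ref{prop:PH}, and separately shows that every narrow cluster point of $\cI(\cP)=\bigcup_\gamma \cI(P_\gamma)$ lies in $\cI(\Phi)$ (Lemma~\ref{inv-mon}). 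In other words, the paper works with the exact invariant measures $\pi_\gamma$ and their cluster points, whereas you work with the approximately invariant $\bar\nu_{n,\gamma}$ and a telescoping error. Both routes are viable; the paper's is more modular because $\pi_\gamma P_\gamma^{k}=\pi_\gamma$ eliminates your telescoping term entirely and reduces everything to the single question ``do cluster points of $\cI(\cP)$ belong to $\cI(\Phi)$?''.

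There are, however, two genuine gaps in your sketch that the paper does address and that you only flag without resolving. First, the Lyapunov bound in your item~(i) is not ``issued from Assumption~\ref{psi-increase}'' --- that assumption only gives coercivity of $\Psi$. The inequality itself (Lemma~\ref{fb-PH}) comes from a careful expansion of $\|x_{n+1}-x_\star\|^2$ via~\eqref{fb-yosida}, splitting the cross term $\ps{A_\gamma,x-x_\star}$ through $J_\gamma$ and using the $\mathcal L^2$ representation $(\varphi,B)$ of $x_\star$; this is where the specific form of $\psi_\gamma$ in~\eqref{psig} arises, and without it you have neither tightness nor the moment control you later invoke. Second, and more seriously, the step ``the path-space result of Section~\ref{sec-apt} applied to the chain started from $\mu_j$ yields $\mu_j P_{\gamma_j}^{k_j}(f)\to\pi(f\circ\Phi(\cdot,t))$'' is exactly where the different-domains case bites. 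Theorem~\ref{fb-apt} is stated for deterministic initial points $a\in\cK\cap\mD_{\gamma M}$; to integrate it against $\mu_j$ you must first show that $\mu_j$ puts mass at most $\varepsilon$ outside $\mD_{M\gamma_j}$ for some $M=M(\varepsilon)$. The paper proves precisely this for $\pi_\gamma\in\cI(P_\gamma)$ in Lemma~\ref{thickening}, by combining the drift inequality with Assumption~\ref{linreg} and Assumption~\ref{JBgrow} to obtain $P_\gamma^{n+1}(a,\bs d)\leq \rho^{n+1}\bs d(a)+C\gamma\sum_k\rho^{n-k}(1+P_\gamma^k(a,\Psi))$ and then Markov's inequality. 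Your Ces\`aro measures $\bar\nu_{n,\gamma}$ would need the analogous estimate, and it is not automatic from tightness alone. Your second ``delicate point'' about the lack of lower semicontinuity of $\Psi$ is real but secondary: once the drift inequality is in place, the uniform bound $\sup_{\pi\in\cI(\cP)}\pi(\Psi)<\infty$ from Proposition~\ref{prop:PH} transfers to cluster points via the $\psi_\gamma$ themselves rather than through $\Psi$ directly.
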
 

By Lem.~\ref{A+B:max} and Prop.~\ref{prop-Z(A)}, the convergences 
\eqref{cvg-1/2pos}, \eqref{Pbarx}, and \eqref{Ebarx} are the consequences of 
\eqref{cvg:support}, \eqref{P:CVS}, and \eqref{E:CVS} respectively.  
We need to prove the latter. 

\subsection{Proof technique} 

We first observe that the Markov kernels $P_\gamma$ are Feller, \emph{i.e.},
they take the set $C_b(E)$ of the real, continuous, and bounded functions on
$E$ to $C_b(E)$. Indeed, for each $f\in C_b(E)$, Eq.~\eqref{Pgamma} shows that
$P_\gamma(\cdot, f) \in C_b(E)$ by the continuity of $J_\gamma(s,\cdot)$ and
$B(s,\cdot)$, and by dominated convergence.

For each $\gamma > 0$, we denote as 
\[
\cI(P_\gamma) \eqdef \{\pi\in\cM(E)\,:\,\pi = \pi P_\gamma\}
\]
the set of invariant probability measures of $P_\gamma$. Define the family of 
kernels $\cP \eqdef \{ P_\gamma \}_{\gamma\in(0,\gamma_0]}$, and let 
\[
\cI(\cP) \eqdef \bigcup_{\gamma\in(0,\gamma_0]}\cI(P_\gamma)
\]
be the set of distributions $\pi$ such that $\pi=\pi P_\gamma$ for at least 
one $P_\gamma$ with $\gamma\in(0,\gamma_0]$. 

The following proposition, which is valid for Feller Markov kernels, has been
proven in \cite{bia-hac-sal-(arxiv)16} in the more general context of
set-valued differential inclusions.

\begin{proposition}
\label{prop:PH}
Let $V:E\to[0,+\infty)$ and $Q:E\to[0,+\infty)$ be measurable. 
Assume that $Q(x) \to \infty$ as $\| x \| \to\infty$. 
Assume that for each $\gamma\in(0,\gamma_0]$, 
\begin{equation}
\label{eq:PH} 
P_\gamma(x, V) \leq V(x) -\alpha(\gamma) Q(x) +\beta(\gamma)\, , 
\end{equation} 
where $\alpha:(0,\gamma_0]\to(0,+\infty)$ and $\beta:(0,\gamma_0]\to\bR$ 
satisfy 
$\sup_{\gamma\in(0,\gamma_0]}\frac{\beta(\gamma)}{\alpha(\gamma)}<\infty$.
Then, the family $\cI(\cP)$ is tight. Moreover, 
$\sup_{\pi\in\cI(\cP)} \pi(Q) < \infty$. 

Assume moreover that, as $\gamma\to 0$, any cluster point of $\cI(\cP)$ is an element of
$\cI(\Phi)$. In particular, $\{\pi \in \cI(\Phi) \, : \, \pi(Q) < \infty \}$ is
not empty. Let $\nu\in\cM(E)$ s.t. $\nu(V)<\infty$.
Let $\mU \eqdef \bigcup_{\pi \in \cI(\Phi)} \support(\pi)$. Then, for all 
$\varepsilon > 0$, 
\[
\limsup_{n\to\infty} \frac 1{n+1}\sum_{k=0}^n 
\bP^{\nu,\gamma}( d(x_k,\mU)>\varepsilon)\xrightarrow[\gamma\to 0]{}0\,. 
\]
Let $N'\in\bN^*$ and $f : E\to \bR^{N'}$ be continuous. Assume that there 
exists $M\geq 0$ and $\varphi:\bR^{N'}\to\bR_+$ such that 
$\lim_{\|a\|\to\infty}\varphi(a)/{\|a\|}=\infty$ and
\[
\forall a\in E,\ \varphi(f(a))\leq M(1+Q(a))\,. 
\]
Then, for all $n\in\bN$, $\gamma\in (0,\gamma_0]$, the r.v.
$$
F_n \eqdef \frac 1{n+1}\sum_{k=0}^n f(x_k)
$$
is $\bP^{\nu,\gamma}$-integrable, and satisfies for all $\varepsilon>0$,
\[
\limsup_{n\to\infty} d\left(\bE^{\nu,\gamma}( F_n)\,,\cS_f\right)
 \xrightarrow[\gamma\to 0]{}0\,, \quad \text{and} \quad 
\limsup_{n\to\infty} 
 \bP^{\nu,\gamma}\left(d\left(F_n\,,\cS_f\right)\geq \varepsilon\right)
  \xrightarrow[\gamma\to 0]{}0\, , 
\]
where $\cS_f \eqdef \{ \pi(f) \, : \, \pi \in \cI(\Phi) \}$. 
\end{proposition}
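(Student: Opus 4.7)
The plan is to use the Foster--Lyapunov drift \eqref{eq:PH} first to establish tightness and bounds on $\pi(Q)$, then to extract $\Phi$-invariant cluster points of the empirical occupation measures by a diagonal/contradiction argument.

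Iterating \eqref{eq:PH} under $\bP^{\nu,\gamma}$ for any initial law $\nu$ with $\nu(V)<\infty$ yields the telescoping bound
\[
\bE^{\nu,\gamma}[V(x_{n+1})]+\alpha(\gamma)\sum_{k=0}^n\bE^{\nu,\gamma}[Q(x_k)]\;\leq\;\nu(V)+(n+1)\beta(\gamma),
\]
so the occupation measure $\nu_n^\gamma\eqdef(n+1)^{-1}\sum_{k=0}^n\nu P_\gamma^k$ satisfies
\[
\nu_n^\gamma(Q)\;\leq\;\frac{\nu(V)}{(n+1)\alpha(\gamma)}+\frac{\beta(\gamma)}{\alpha(\gamma)}\,.
\]
Since $Q(x)\to\infty$ as $\|x\|\to\infty$, Markov's inequality yields uniform tightness of $\{\nu_n^\gamma\}_{n\geq n_0,\,\gamma\in(0,\gamma_0]}$. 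Combined with the Feller property of $P_\gamma$ and the fact that $\|\nu_n^\gamma P_\gamma-\nu_n^\gamma\|_{TV}\to 0$, this is exactly the Krylov--Bogolyubov setup and gives $\cI(P_\gamma)\neq\emptyset$. Applied with $\nu\in\cI(P_\gamma)$, after a truncation of $V$ to avoid an $\infty-\infty$ cancellation, the same telescoping bound yields $\sup_{\pi\in\cI(\cP)}\pi(Q)\leq C$, hence tightness of $\cI(\cP)$. Nonemptiness of $\{\pi\in\cI(\Phi):\pi(Q)<\infty\}$ then follows by picking $\pi^{\gamma_k}\in\cI(P_{\gamma_k})$ along $\gamma_k\to 0$, extracting a weakly converging subsequence whose limit $\pi^*$ lies in $\cI(\Phi)$ by the cluster-point hypothesis, and using Fatou to transfer the $Q$-bound to~$\pi^*$.

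For the Ces\`aro support convergence I argue by contradiction. If it failed, there would exist $\varepsilon,\delta>0$ and $\gamma_k\to 0$ with $\limsup_n\nu_n^{\gamma_k}(\{d(\cdot,\mU)>\varepsilon\})\geq 2\delta$ for each~$k$; hence for each $k$ one can pick $n_{k,j}\to\infty$ (in $j$) with $\nu_{n_{k,j}}^{\gamma_k}(F_\varepsilon)\geq\delta$ for the closed set $F_\varepsilon\eqdef\{d(\cdot,\mU)\geq\varepsilon\}$. The uniform tightness from the previous paragraph allows me to extract, for each fixed $k$, a weakly convergent sub-subsequence $\nu_{n_{k,j}}^{\gamma_k}\rightharpoonup\pi^{\gamma_k}$; by Portmanteau (upper semi-continuity on closed sets), $\pi^{\gamma_k}(F_\varepsilon)\geq\delta$, and Krylov--Bogolyubov gives $\pi^{\gamma_k}\in\cI(P_{\gamma_k})\subseteq\cI(\cP)$. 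A further extraction, using tightness of $\cI(\cP)$ and the cluster-point hypothesis, yields $\pi^{\gamma_k}\rightharpoonup\pi^{**}\in\cI(\Phi)$ with $\pi^{**}(F_\varepsilon)\geq\delta$ by Portmanteau; this contradicts $\support(\pi^{**})\subseteq\mU$.

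For the conclusions on $F_n$, the assumption $\varphi(f(a))\leq M(1+Q(a))$ with $\varphi(y)/\|y\|\to\infty$ forces $\|f(a)\|\leq C(1+Q(a))$, and the uniform bound on $\nu_n^\gamma(Q)$ then gives $\bP^{\nu,\gamma}$-integrability of $F_n$. Moreover $\sup_{n,\gamma}\nu_n^\gamma(\varphi\circ f)<\infty$, which by de la Vall\'ee-Poussin's criterion yields uniform integrability of $\{\nu_n^\gamma\circ f^{-1}\}_{n,\gamma}$. The contradiction scheme of the previous paragraph, applied to $\bE^{\nu,\gamma}[F_n]=\int f\,d\nu_n^\gamma$, then gives the expectation statement, since uniform integrability lets me pass $\int f\,d\nu_{n_{k,j}}^{\gamma_k}\to\pi^*(f)\in\cS_f$ along weakly converging subsequences; the probability statement requires in addition a variance-type concentration of $F_n$ around its mean, obtained from a second-moment estimate for the Markov chain under the drift condition. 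I expect the main obstacle to be the double-limit/diagonal extraction of the previous paragraph, bridging the occupation measures $\nu_n^\gamma$ (in the regime $n\to\infty,\,\gamma\to 0$) with the hypothesis, which only concerns the \emph{invariant} measures of $P_\gamma$; the companion concentration estimate for the probability statement is the other non-routine ingredient.
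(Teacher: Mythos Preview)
The paper's own proof of this proposition is not self-contained: it simply cites \cite[Prop.~6.7 and Sec.~6.4]{bia-hac-sal-(arxiv)16}, so there is no in-paper argument to compare your sketch against in detail. Your overall architecture (iterate the drift to bound the deterministic occupation measures $\nu_n^\gamma$, Krylov--Bogolyubov to produce $\cI(P_\gamma)\neq\emptyset$, truncation of $V$ to get $\sup_{\pi\in\cI(\cP)}\pi(Q)<\infty$, Prokhorov plus the cluster-point hypothesis, and a double-extraction contradiction argument using Portmanteau on the closed set $F_\varepsilon$) is exactly the standard line and is presumably what the companion paper does; the Ces\`aro support statement and the expectation statement for $F_n$ are handled correctly by your scheme.

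The part that is not yet right is the \emph{probability} statement for $F_n$. Your proposed route---``a variance-type concentration of $F_n$ around its mean''---would, if it worked, show that $F_n$ is close to $\bE^{\nu,\gamma}[F_n]=\nu_n^\gamma(f)$ with high probability. But the drift \eqref{eq:PH} gives you no usable variance bound (there is no spectral gap, geometric ergodicity, or Poisson-equation control here), and more importantly the target set $\cS_f$ is defined via $\cI(\Phi)$, not via the single point $\nu_n^\gamma(f)$. The standard argument instead works with the \emph{random} empirical measure $\Lambda_n\eqdef(n+1)^{-1}\sum_{k=0}^n\delta_{x_k}$: one shows (i) tightness of $(\Lambda_n)_n$ $\bP^{\nu,\gamma}$-almost surely from the pathwise Ces\`aro bound on $Q$ obtained from \eqref{eq:PH}, and (ii) that any weak cluster point of $(\Lambda_n)$ lies in $\cI(P_\gamma)$ almost surely, via the martingale $\sum_k(\varphi(x_{k+1})-P_\gamma\varphi(x_k))$ and the Feller property. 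Uniform integrability (your de la Vall\'ee-Poussin step is correct) then gives $F_n=\Lambda_n(f)$ close to $\{\pi(f):\pi\in\cI(P_\gamma)\}$ for large $n$ with high probability, and a second contradiction/extraction in $\gamma$ pushes this to $\cS_f$. You should replace the concentration idea by this pathwise empirical-measure argument. A minor additional point: your ``Fatou to transfer the $Q$-bound to $\pi^*$'' needs $Q$ lower semicontinuous to be literally correct under narrow convergence, whereas the statement only assumes $Q$ measurable; the paper glosses over the same issue.
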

\begin{proof}
Assume that Eq.~(\ref{eq:PH}) holds. By \cite[Prop. 6.7]{bia-hac-sal-(arxiv)16}, 
  $\cI(\cP)$ is tight and $\sup_{\pi\in\cI(\cP)} \pi(Q) < \infty$, which proves the first point.
Assume moreover that, as $\gamma\to 0$, any cluster point of $\cI(\cP)$ is an element of
$\cI(\Phi)$. By the tightness of $\cI(\cP)$ and the Prokhorov theorem, such a cluster point $\pi$
exists, and satisfies $\pi(Q)<\infty$ by the first point just shown.
The rest of the proof follows \cite[Section 6.4]{bia-hac-sal-(arxiv)16} word-for-word.
\end{proof}

In order to prove Th.~\ref{the:CV}, it is enough to show that the
assumptions of Prop.~\ref{prop:PH} are satisfied. Namely, we need to 
establish \eqref{eq:PH} and to show that the cluster points of $\cI(\cP)$
as $\gamma\to 0$ are elements of $\cI(\Phi)$. 

In Sec.~\ref{sec-apt}, we show that the linearly interpolated process
constructed from the sequence $(x_n)$ converges narrowly as $\gamma\to 0$ to a
DI solution in the topology of uniform convergence on compact sets.  The main
result of this section is Th.~\ref{fb-apt}, which has its own interest.  To
prove this theorem, we establish the tightness of the linearly interpolated
process (Lem.~\ref{lem:ui}), then we show that the limit points coincide with
the DI solution (Lem.~\ref{lem:sko}--\ref{lem:cvth}).  In Sec.~\ref{sec-clust},
we start by establishing the inequality \eqref{eq:PH}, which is shown in
Lem.~\ref{fb-PH} with $Q(x) = \Psi(x)$. Using the tightness of $\cI(\cP)$ in
conjunction with Th.~\ref{fb-apt}, Lem~\ref{inv-mon} shows that the cluster
points of $\cI(\cP)$ are elements of $\cI(\Phi)$. In the different domains
case, this lemma requires that the invariant measures of $P_\gamma$ put most of
their weights in a thickening of the domain $\mD$ of order $\gamma$.  This fact
is established by Lem.~\ref{thickening}.

\section{Case studies - Tightness of the invariant measures} 
\label{sec-tightness}

Before proving the main results, we first address 
three important cases: the case of the random
proximal gradient algorithm, the case where $A(s)$ is an affine monotone
operator and $B(s) = 0$, and the case where $\mD$ is bounded.  The main
problem is to ensure that one of the cases of Assumption~\ref{psi-increase} is
verified. We close the section with a general condition ensuring that
Assumption~\ref{psi-increase}--\ref{psi-sous-lin} is verified. 
The proofs are postponed to Appendix~\ref{anx:case-studies}.

\subsection{A random proximal gradient algorithm} 
\label{subdif}
Let $(\Sigma, \mcA, \zeta)$ be a probability space, where $\mcA$ is
$\zeta$-complete.  Denoting as $\epi$ the epigraph of a function, a function 
$h : \Sigma \times  E \to (-\infty, \infty]$ is called a convex normal
integrand~\cite{roc-69(mes)} if the set-valued mapping 
$s \mapsto \epi h(s, \cdot)$ is closed-valued and measurable, and if
$h(s,\cdot)$ is convex. To simplify the presentation, we furthermore assume
that $h$ is finite everywhere, noting that the results can be extended to the
case where $h$ can take the value $\infty$.  Observe that the set-valued
function $s \mapsto \partial h(s,\cdot)$ is a measurable $\Sigma\to\maxmon$
function in the sense of Section~\ref{mon} \cite{att-79} (in all what follows,
the subdifferential or the gradient of a function in $(s,x)$ will be meant to
be taken w.r.t.~$x$).  Assume that $\int | h(s,x) | \zeta(ds) < \infty$ for all
$x\in E$, and consider the convex function 
$H(x) \eqdef \int h(s,x) \, \zeta(ds)$
defined on $E$.  By \emph{e.g.}, \cite[page~179]{roc-wet-82}, 
$\partial H(x) = \int \partial h(s,x) \, \zeta(ds)$. 

Let $f : \Sigma\times E \to \bR$ be such that $f(\cdot, x)$ is
$\mcA$-measurable for all $x\in E$, and $f(s,\cdot)$ is convex and continuously
differentiable for all $s\in \Sigma$. Moreover, assume that 
$\int | f(s,x) | \, \zeta(ds) < \infty$ for all $x\in E$, and define the 
function $F(x) \eqdef \int f(s,x) \, \zeta(ds)$ on $E$. This function is 
differentiable with $\nabla F(x) = \int \nabla f(s,x) \, \zeta(ds)$. 

Finally, given $m\in \bN^*$, let $\{\mC_1,\dots,\mC_m\}$ be a collection of 
closed and convex subsets of $E$. We assume that 
$\bigcap_{i=1}^m \relint(\mC_i) \neq \emptyset$, where $\relint$ is the 
relative interior of a set. 

Our purpose is to approximatively solve the optimization problem
\begin{equation} 
\label{csrt-min} 
\min_{x\in \mC} F(x) + H(x), \quad \mC \eqdef \bigcap_{i=1}^m \mC_i \, 
\end{equation} 
whether the minimum is attained. 
Let $(u_n)$ be an iid sequence on $\Sigma$ with the probability measure 
$\zeta$. 
Let $(I_n)$ be an iid sequence on $\{0, 1, \ldots, m \}$ with the probability 
measure $\alpha$ such that $\alpha(k) = \bP(I_1=k) > 0$ for each $k$. Assume
that $(I_n)$ and $(u_n)$ are independent. In order to solve the 
problem~\eqref{csrt-min}, we consider the iterates 
\begin{equation} 
\label{sto-gra-prox} 
x_{n+1} = \left\{\begin{array}{ll} 
\prox_{\alpha(0)^{-1}\gamma h(u_{n+1}, \cdot)}
   (x_n - \gamma \nabla f(u_{n+1}, x_n)) & \text{if } I_{n+1} = 0, \\
\Pi_{\mC_{I_{n+1}}}(x_n - \gamma \nabla f(u_{n+1}, x_n)) & \text{otherwise}, 
\end{array}\right. 
\end{equation} 
for $\gamma > 0$. 
This problem can be cast in the general framework of the stochastic proximal
gradient algorithm presented in the introduction. On the space  
$\Xi \eqdef \Sigma \times \{0,\ldots, m\}$, define the iid random variables 
$\xi_n \eqdef(u_n, I_n)$ with the measure $\mu \eqdef \zeta\otimes\alpha$. 
Denoting as $\iota_S$ the indicator function of the set $S$, let 
$g : \Xi \times E \to (-\infty, \infty]$ be defined as 
\[
g(s, x) \eqdef \left\{\begin{array}{ll} \alpha(0)^{-1} h(u,x) &
\text{if } i = 0, \\
\iota_{\mC_i}(x) & \text{otherwise}, 
\end{array}\right. 
\]
where $s = (u, i)$. Then, Problem~\eqref{csrt-min} is equivalent to 
minimizing the sum $F(x) + G(x)$, where 
\[
G(x) \eqdef \int g(s,x)\, \mu(ds) = 
\sum_{k=1}^m \iota_{\mC_k}(x) + H(x) \, . 
\]
It is furthermore clear that the algorithm~\eqref{sto-gra-prox} is the instance
of the general algorithm~\eqref{fb-sto} that corresponds to 
$A(s) = \partial g(s,\cdot)$ and $B(s) = \nabla f(u,\cdot)$ for $s = (u,i)$.
With our assumptions, the qualification conditions hold, and the three sets
$\arg\min(F+G)$, $Z(\partial G + \nabla F)$, and $Z(\mA+\mB)$ coincide. 

Before going further, we recall some well known facts regarding the 
coercive functions belonging to $\Gamma_0$. A function $q\in\Gamma_0$ is
said coercive if $\lim_{\|x\|\to\infty} q(x) = \infty$. It is 
said supercoercive if $\lim_{\|x\|\to\infty} q(x)/\| x \| = \infty$. The three following 
conditions are equivalent: i) $q$ is coercive, ii) there exists $a\in\bR$ such
that the level set $\lev_{\leq a} q$ is non empty and compact, 
iii) $\lim\inf_{\| x\|\to\infty} q(x) / \| x \| > 0$ 
(see \emph{e.g.}, \cite[Prop.~11.11 and 11.12]{bau-com-livre11} and
\cite[Prop.~1.1.5]{bor-lew-livre06}). 

The main result of this paragraph is the following: 
\begin{proposition}
\label{tight-stogra} 
Let the following hypotheses hold true: 
\begin{enumerate}[label=H{\arabic*}] 
\item\label{L2} There exists $x_\star \in Z(\partial G + \nabla F)$ admitting a 
$\mathcal L^2$ representation $(\varphi((u,i)), \nabla f(u, x_\star))$. 

\item\label{baillon} There exists $c>0$ s.t. for every $x\in E$,
$$
\int  \ps{\nabla f(s,x) - \nabla f(s,x_\star), x - x_\star} \, \zeta(ds)
\geq c \int \| f(s,x) - f(s,x_\star) \|^2  \, \zeta(ds) .
$$

\item\label{F+G-coerc} 
The function $F+G$ satisfies one of the following properties: 
\begin{enumerate}[label=(\alph*)]
\item\label{Zer-cpct} $F+G$ is coercive. 
\item\label{F+G-super} $F+G$ is supercoercive. 
\end{enumerate}

\end{enumerate} 

Then, Assumption~\ref{psi-increase}--\ref{psi-sous-lin} 
(resp., Assumption~\ref{psi-increase}--\ref{psi-linear}) holds true if 
Hypothesis~\ref{F+G-coerc}--\ref{Zer-cpct} 
(resp., Hypothesis~\ref{F+G-coerc}--\ref{F+G-super}) holds true. 
\end{proposition}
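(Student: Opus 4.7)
The goal is to exhibit constants $c_1>0$ and $c_2\in\bR$ (independent of $\gamma$, after possibly shrinking $\gamma_0$) such that
$$
\psi_\gamma(x) \ \geq\ c_1\bigl[(F+G)(x)-(F+G)(x_\star)\bigr] - c_2
\qquad\forall x\in E,\ \forall\gamma\in(0,\gamma_0].
$$
Since $x_\star$ minimizes $F+G$ (this is a reformulation of hypothesis~\ref{L2}, using the equality $\arg\min(F+G)=Z(\partial G+\nabla F)=Z(\mA+\mB)$ recalled after~\eqref{sto-gra-prox}), the bracket is nonnegative, so this inequality transfers growth of $F+G$ to growth of $\Psi$. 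Coercivity of $F+G$ (hypothesis~\ref{Zer-cpct}) then forces $\liminf_{\|x\|\to\infty}\Psi(x)/\|x\|>0$ via the equivalence between coercivity and positivity of $\liminf(F+G)(x)/\|x\|$ recalled before the proposition, giving Assumption~\ref{psi-increase}--\ref{psi-sous-lin}; supercoercivity (\ref{F+G-super}) directly yields $\Psi(x)/\|x\|\to\infty$, i.e.~\ref{psi-linear}.

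To establish the pointwise bound, I would decompose $\psi_\gamma(x)$ as in~\eqref{psig} into the $A$-part $T_1+T_3$ and the $B$-part $T_2+T_4$. Since $B(s,x)=\nabla f(u,x)$ depends only on the first coordinate of $s=(u,i)$, hypothesis~\ref{baillon} (read with $\|\nabla f(s,x)-\nabla f(s,x_\star)\|^2$ under the integral, as the Baillon--Haddad structure dictates) combined with the choice $\gamma_0\leq c/12$ yields
$$
T_2+T_4\ \geq\ \tfrac{c}{2}\int\|\nabla f(u,x)-\nabla f(u,x_\star)\|^2\,\zeta(du),
$$
which, through a Bregman-type identity and convexity of $f(u,\cdot)$, is further bounded below by a multiple of $F(x)-F(x_\star)-\ps{\nabla F(x_\star),x-x_\star}$. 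For the $A$-part, writing $y_s=x-\gamma B(s,x)$ and $z_s=J_\gamma(s,y_s)$, the two subgradient inequalities
$$
\ps{A_\gamma(s,y_s),z_s-x_\star}\geq g(s,z_s)-g(s,x_\star),\qquad \ps{\varphi(s),z_s-x_\star}\leq g(s,z_s)-g(s,x_\star),
$$
valid because $A_\gamma(s,y_s)\in\partial g(s,z_s)$ and $\varphi(s)\in\partial g(s,x_\star)$, combined with the identity $z_s-x=-\gamma(A_\gamma(s,y_s)+B(s,x))$ and the positive summand $\gamma\int\|A_\gamma(s,y_s)\|^2\mu(ds)$, give after integration and Young's inequality a lower bound of the form $T_1+T_3\geq\int[g(s,z_s)-g(s,x_\star)]\mu(ds)-C\gamma$. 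A final convexity step for $g(s,\cdot)$ compares $\int g(s,z_s)\mu(ds)$ to $G(x)$ using the quantitative estimate $\|z_s-x\|\leq\gamma\|A_\gamma(s,y_s)\|+\gamma\|B(s,x)\|$, producing the $G(x)-G(x_\star)$ contribution up to re-absorbable errors.

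The main obstacle I anticipate is the treatment of the $A$-part, because $g(s,\cdot)$ is non-smooth (it contains the indicators $\iota_{\mC_i}$), so the comparison between $g(s,z_s)$ and $g(s,x)$ cannot rely on a Taylor expansion but only on subgradient inequalities, and one also needs the qualification condition $\bigcap_i\relint(\mC_i)\neq\emptyset$ so that $\partial G$ decomposes nicely. Moreover, the cross-term $\gamma\int\ps{A_\gamma(s,y_s),B(s,x)}\mu(ds)$ that appears along the way must be controlled by Young's inequality; the positive $+\gamma\|A_\gamma(s,y_s)\|^2$ summand built into $\psi_\gamma$ is precisely what enables this absorption, together with the residual $\int\|\nabla f(u,x)-\nabla f(u,x_\star)\|^2\zeta(du)$ surviving from Step~1 for the $B$ counterpart. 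Once these two ingredients are assembled, the coercive and supercoercive cases follow immediately.
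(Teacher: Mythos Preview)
Your target inequality
\[
\psi_\gamma(x)\ \geq\ c_1\bigl[(F+G)(x)-(F+G)(x_\star)\bigr]-c_2
\]
cannot hold for all $x\in E$: whenever $x\notin\mC=\bigcap_i\mC_i$ we have $G(x)=+\infty$, whereas $\psi_\gamma(x)$ is built from the Yosida regularization $A_\gamma(s,\cdot)$ and the resolvent $J_\gamma(s,\cdot)$, which are finite everywhere. Your ``final convexity step'' comparing $\int g(s,z_s)\,\mu(ds)$ to $G(x)$ therefore fails precisely where the indicator functions bite: for $s=(u,i)$ with $i\geq 1$ and $x\notin\mC_i$, $g(s,x)=+\infty$ and $\partial g(s,x)=\emptyset$, so no subgradient inequality lets you pass from $g(s,z_s)$ (finite, since $z_s\in\mC_i$) to $g(s,x)$. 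The ``re-absorbable errors'' you mention are infinite here.

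The paper circumvents this by replacing $G$ with its integrated Moreau envelope $G^\gamma(x)\eqdef\int g_\gamma(s,x)\,\mu(ds)$, which is finite and convex on all of $E$ (Lem.~\ref{Ggamma}). The key pointwise estimate (Lem.~\ref{bnd-psi}) is then
\[
G^\gamma(x)+F(x)-G^\gamma(x_\star)-F(x_\star)\ \leq\ 2\psi_\gamma(x)+\gamma C,
\]
obtained by the subgradient/convexity manipulations you sketch, but applied to $g_\gamma(s,\cdot)$ rather than to $g(s,\cdot)$. This leaves a genuine new obstacle: one must show that coercivity (resp.\ supercoercivity) of $F+G$ transfers to $F+G^{\gamma_0}$. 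The paper does this through recession functions, proving $(G^{\gamma_0}+F)^\infty=(G+F)^\infty$ via the identities $q^\infty=(q_\gamma)^\infty$ and $\bigl(\int q(s,\cdot)\,\mu(ds)\bigr)^\infty=\int q(s,\cdot)^\infty\,\mu(ds)$ (Lem.~\ref{rec-moreau} and~\ref{int<->rec}), together with the characterization of (super)coercivity in terms of the recession function (Lem.~\ref{coer-rec}). This recession-function step is the missing idea in your proposal.
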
 

Let us comment these hypotheses. A light condition ensuring the truth of 
Hypothesis~\ref{L2} is provided by the following lemma. 

\begin{lemma}
\label{lm:L2} 
Assume that there exists $x_\star \in Z(\partial G + \nabla F)$ satisfying the
two following conditions: 
$\int \| \nabla f(u, x_\star) \|^2 \, \zeta(du) < \infty$, and there exists
an open neighborhood $\mathcal N$ of $x_\star$ such that 
$\int h(u,x)^2 \, \zeta(du) < \infty$ for all $x \in {\mathcal N}$. Then, 
Hypothesis~\ref{L2} is verified. 
\end{lemma}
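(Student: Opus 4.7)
The plan is to construct a selection $\varphi$ of the set-valued map $s\mapsto A(s,x_\star)=\partial g(s,x_\star)$ satisfying $\int \varphi\,d\mu = -\nabla F(x_\star)$ and lying in $\mathcal L^2(\Xi,\mathcal G,\mu;E)$. The $\mathcal L^2$ integrability of $B(\cdot,x_\star)=\nabla f(\cdot,x_\star)$ is immediate from the first hypothesis.

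\textbf{Step 1: Decomposition via qualification.} Since $\dom H=E$ and the standing assumption $\bigcap_{i=1}^m \relint(\mathcal C_i)\neq \emptyset$ holds, the Moreau--Rockafellar subdifferential sum formula gives
\[
\partial G(x_\star) \;=\; \partial H(x_\star) + \sum_{k=1}^m N_{\mathcal C_k}(x_\star).
\]
As $-\nabla F(x_\star)\in \partial G(x_\star)$, write $-\nabla F(x_\star) = v_0 + \sum_{k=1}^m v_k$ with $v_0\in \partial H(x_\star)$ and $v_k\in N_{\mathcal C_k}(x_\star)$.

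\textbf{Step 2: Exact Aumann representation of $v_0$.} Because $h$ is a convex normal integrand, $h(s,\cdot)$ is finite on $E$, and $\int|h(u,x)|\,\zeta(du)<\infty$ for every $x$, the classical Hiriart--Urruty / Rockafellar interchange theorem for normal integrands (the same tool invoked in Section~\ref{mon} to justify $\partial H = \int \partial h\,d\zeta$) yields that the Aumann integral of $u\mapsto \partial h(u,x_\star)$ coincides with $\partial H(x_\star)$ \emph{without} closure at this point. Hence there exists a measurable selection $\psi:\Sigma\to E$ with $\psi(u)\in \partial h(u,x_\star)$ $\zeta$-a.e.\ and $\int \psi(u)\,\zeta(du)=v_0$.

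\textbf{Step 3: Every selection is $\mathcal L^2$.} Let $r>0$ be small enough that $x_\star\pm r e_i\in \mathcal N$ for the canonical basis $(e_i)_{i=1}^N$ of $E$. For any $v\in \partial h(u,x_\star)$, convexity gives $\langle v,\pm e_i\rangle \leq r^{-1}(h(u,x_\star\pm r e_i)-h(u,x_\star))$, whence
\[
|\langle v,e_i\rangle|^2 \;\leq\; \frac{|h(u,x_\star+r e_i)-h(u,x_\star)|^2 + |h(u,x_\star-re_i)-h(u,x_\star)|^2}{r^2}.
\]
Summing over $i$ and integrating against $\zeta$, the $\mathcal L^2(\zeta)$-integrability of $h(u,\cdot)$ on $\mathcal N$ (second hypothesis) implies $\int \|\psi(u)\|^2\,\zeta(du)<\infty$. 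The bound is universal, so \emph{every} measurable selection of $u\mapsto \partial h(u,x_\star)$, and in particular the one produced in Step~2, lies in $\mathcal L^2(\Sigma,\mathcal A,\zeta;E)$.

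\textbf{Step 4: Assemble $\varphi$.} Define $\varphi:\Xi\to E$ by
\[
\varphi((u,0)) \;\eqdef\; \alpha(0)^{-1}\psi(u), \qquad \varphi((u,k)) \;\eqdef\; \alpha(k)^{-1} v_k \quad (k=1,\dots,m).
\]
Since $N_{\mathcal C_k}(x_\star)$ is a cone, $\varphi((u,k))\in N_{\mathcal C_k}(x_\star) = \partial g((u,k),x_\star)$, and $\varphi((u,0))\in \alpha(0)^{-1}\partial h(u,x_\star) = \partial g((u,0),x_\star)$. Integrating against $\mu=\zeta\otimes\alpha$,
\[
\int \varphi\,d\mu \;=\; \alpha(0)\cdot\alpha(0)^{-1}\!\int\psi\,d\zeta + \sum_{k=1}^m \alpha(k)\cdot\alpha(k)^{-1} v_k \;=\; v_0+\sum_{k=1}^m v_k \;=\; -\nabla F(x_\star).
\]
Adding $\int B(s,x_\star)\mu(ds) = \int \nabla f(u,x_\star)\zeta(du) = \nabla F(x_\star)$ yields zero, and $\varphi\in \mathcal L^2(\mu)$ by Step~3 together with the finiteness of the constants $\alpha(k)^{-1}v_k$. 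This verifies Hypothesis~\ref{L2}.

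The only delicate point is Step~2: getting the Aumann integral of $\partial h(\cdot,x_\star)$ to equal $\partial H(x_\star)$ \emph{exactly}, rather than up to closure. This is handled by standard normal-integrand machinery given that $h(\cdot,x)$ is $\zeta$-integrable for every $x\in E$; everything else is elementary convex analysis.
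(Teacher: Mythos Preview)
Your proof is correct and follows essentially the same approach as the paper: both hinge on the subgradient sandwich inequality $h(u,x_\star)-h(u,x_\star-d)\leq \langle\psi(u),d\rangle\leq h(u,x_\star+d)-h(u,x_\star)$ to show that \emph{any} measurable selection of $\partial h(\cdot,x_\star)$ lies in $\mathcal L^2(\zeta)$, and both then combine this with the (finitely many) normal-cone components to conclude.

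The only real difference is organizational. The paper first proves the universal $\mathcal L^2$ bound for selections of $\partial g(\cdot,x_\star)$ and then simply invokes the interchange identity $\partial G=\int\partial g\,d\mu$ to assert the existence of a selection $\varphi$ with $\int\varphi\,d\mu=-\nabla F(x_\star)$, which is then automatically $\mathcal L^2$. You instead construct $\varphi$ explicitly: you decompose $-\nabla F(x_\star)$ via Moreau--Rockafellar, represent the $\partial H$ piece by an exact Aumann selection, and patch in the normal-cone vectors by hand. Your route is slightly longer but more self-contained, and your flagging of the exactness issue in Step~2 (Aumann integral without closure) is well placed --- the paper leaves this implicit.
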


We now turn to Hypothesis~\ref{baillon}. When studying the deterministic
Forward-Backward algorithm \eqref{fb-deterministe}, it is standard to assume
that $\sB$ is cocoercive, in other words, that there exists a constant $L > 0$
such that $\ps{ \sB(x) - \sB(y), x- y} \geq L \| \sB(x) - \sB(y) \|^2$
\cite[Th.~25.8]{bau-com-livre11}. A classical case where this is satisfied is
the case where $\sB$ is the gradient of a convex differentiable function having
a $1/L$-Lipschitz continuous gradient, as is shown by the Baillon-Haddad
theorem~\cite[Cor.~18.16]{bau-com-livre11}. In our case, if we assume that
there exists a nonnegative measurable function $\beta(s)$ such that 
$\| \nabla f(s, x) - \nabla f(s, x') \| \leq \beta(s) \| x - x'\|$, then by
the Baillon-Haddad theorem, 
\[
\ps{\nabla f(s,x) - \nabla f(s,x'), x - x'} 
\geq \frac{1}{\beta(s)} \| \nabla f(s, x) - \nabla f(s, x') \|^2 \, . 
\]
Thus, one obvious case where Hypothesis~\ref{baillon} is satisfied is the 
case where $\beta(s)$ is bounded.

Using proposition \ref{tight-stogra}, we can now obtain the following corollary
to Th.~\ref{the:CV}. 

\begin{corollary}
Let Hypotheses \ref{L2}--\ref{F+G-coerc} hold true. Assume in addition the 
following hypotheses:  
\begin{enumerate}[label=C{\arabic*}] 
\item\label{h0bnd} 
For every compact set $\cK\subset E$, there exists $\varepsilon>0$ such 
that
\[
\sup_{x\in\cK \cap \mC} 
\int \| \partial h_0(u, x) \|^{1+\varepsilon} \, \zeta(du) < \infty ,   
\]
where $\partial h_0(u,\cdot)$ is the least norm element of 
$\partial h(u,\cdot)$.

\item\label{nablaf-bnd} 
For every compact set $\cK\subset E$, there 
exists $\varepsilon > 0$ such that 
\[
\sup_{x\in\cK} \int \| \nabla f (u, x) \|^{1+\varepsilon} \, \zeta(du) 
  < \infty\, . 
\] 

\item\label{regfct} The sets $\mC_1,\ldots, \mC_m$ are linearly regular. 

\item\label{JBgrow-fct}  
For all $\gamma\in (0,\gamma_0]$ and all $x\in E$, 
\[
\int \left( 
 \| \nabla h_\gamma(u, x) \| + \| \nabla f(u, x) \| \right) \zeta(du)  
 \leq C ( 1 + | F(x) + H_\gamma(x)| ) \, ,  
\] 
where $h_\gamma(u,\cdot)$ is the Moreau envelope of $h(u,\cdot)$. 
\end{enumerate} 

Then, for each probability measure $\nu$ having a finite second moment, 
\[
\limsup_{n\to\infty} 
\frac 1{n+1}\sum_{k=0}^n 
 \bP^{\nu,\gamma}\left( d(x_k, \arg\min(F+G))>\varepsilon\right)
 \xrightarrow[\gamma\to 0]{}0\,. 
\] 
Moreover, if Hypothesis~\ref{F+G-coerc}--\ref{F+G-super} is satisfied, then 
\begin{gather*} 
\limsup_{n\to\infty}\ 
\bP^{\nu,\gamma}\left(d\left(\bar x_n , \arg\min(F+G) \right)\geq 
      \varepsilon\right)\xrightarrow[\gamma\to 0]{}0, \ \text{and} \\ 
\limsup_{n\to\infty}\ d\left(\bE^{\nu,\gamma}(\bar x_n), \arg\min(F+G) \right)
      \xrightarrow[\gamma\to 0]{}0\,. 
\end{gather*} 
\end{corollary}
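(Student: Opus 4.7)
The plan is to reduce to Theorem~\ref{the:CV} by verifying each of its hypotheses for the random proximal gradient iteration~\eqref{sto-gra-prox} under the identifications of Section~\ref{subdif}. Since $D((u,0)) = E$ while $D((u,i)) = \mC_i$ for $i \geq 1$, this puts us in the different domains case, so we must check Assumptions~\ref{A0bnd}, \ref{Amax}, \ref{BBnd}, \ref{psi-increase} together with~\ref{linreg}--\ref{JBgrow}. First, the qualification condition $\bigcap_i \relint(\mC_i) \neq \emptyset$ combined with the standard interchange of expectation and subdifferentiation (as in~\cite{wal-wet-69}) ensures $\mA + \mB = \partial(F+G)$; this delivers Assumption~\ref{Amax}, identifies the essential intersection $\mD$ as $\mC$, and identifies $Z(\mA+\mB)$ with $\arg\min(F+G)$. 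Moreover, $\mA+\mB$ is demipositive as the subdifferential of a proper convex lsc function admitting minimizers.

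The moment-type assumptions are straightforward. Assumption~\ref{BBnd} is exactly Hypothesis~\ref{nablaf-bnd}. For Assumption~\ref{A0bnd}, note that $A_0((u,i),x) = 0$ when $i \geq 1$ since $0$ lies in every normal cone, while $A_0((u,0),x) = \alpha(0)^{-1} \partial h_0(u,x)$; the required bound then follows from Hypothesis~\ref{h0bnd}. Assumption~\ref{psi-increase}--\ref{psi-sous-lin} (respectively~\ref{psi-linear}) is obtained by invoking Proposition~\ref{tight-stogra} under Hypotheses~\ref{L2}--\ref{baillon} combined with the coercive (respectively supercoercive) case of~\ref{F+G-coerc}.

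For the different domains assumptions, I would argue as follows. For~\ref{linreg}, $d(x,D((u,0)))=0$ yields $\int d(x,D(s))^2\mu(ds) = \sum_{i=1}^m \alpha(i)\, d(x,\mC_i)^2$, which dominates $\max_i d(x,\mC_i)^2$ up to a positive constant, and linear regularity~\ref{regfct} then lower bounds this by $d(x,\mC)^2$. For~\ref{JBnd-dif}, the integrand vanishes for $i \geq 1$ because $J_\gamma((u,i),\cdot) = \Pi_{\mC_i} = \Pi_{\cl(D((u,i)))}$, and for $i=0$ it reduces to $\|\nabla h_{\alpha(0)^{-1}\gamma}(u,x)\|^{1+\varepsilon}$, which is uniformly bounded in $\gamma>0$ by $\|\partial h_0(u,x)\|^{1+\varepsilon}$ thanks to the standard monotonicity of Yosida norms, so Hypothesis~\ref{h0bnd} again suffices. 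The most delicate point is Assumption~\ref{JBgrow}: Hypothesis~\ref{JBgrow-fct} gives the upper bound $C(1 + |F(x)+H_\gamma(x)|)$, so one must compare $|F + H_\gamma|$ to $\psi_\gamma$. This should be done by expanding~\eqref{psig} using the $\mathcal L^2$ representation $(\varphi, \nabla f(\cdot, x_\star))$ and applying convexity of $F$ and $H_\gamma$ together with the subgradient inequality at $x_\star$ to obtain $\psi_\gamma(x) \geq c\bigl(F(x) + H_\gamma(x) - F(x_\star) - H_\gamma(x_\star)\bigr) - C$, yielding~\ref{JBgrow} after absorbing constants.

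With every assumption verified, Theorem~\ref{the:CV} gives~\eqref{cvg-1/2pos} (using demipositivity and $Z(\mA+\mB) = \arg\min(F+G)$), which is the first conclusion of the corollary. Under the supercoercive hypothesis, Proposition~\ref{tight-stogra} upgrades Assumption~\ref{psi-increase} to~\ref{psi-linear}, so Theorem~\ref{the:CV} also delivers~\eqref{Pbarx}--\eqref{Ebarx}, giving the second conclusion. The main obstacle in this scheme is the reconciliation of~\ref{JBgrow-fct} with~\ref{JBgrow} through the lower bound $\psi_\gamma \gtrsim F + H_\gamma$ sketched above, which requires careful accounting of the cross terms in~\eqref{psig}; the other verifications are routine exercises in convex analysis and in the structure of the normal-cone/prox resolvents at play.
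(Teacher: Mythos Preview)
Your approach is exactly the paper's: reduce the corollary to Theorem~\ref{the:CV} by checking Assumptions~\ref{A0bnd}--\ref{JBgrow} from Hypotheses~\ref{L2}--\ref{F+G-coerc} and~\ref{h0bnd}--\ref{JBgrow-fct}, then invoke demipositivity of $\partial(F+G)$ to pass from~\eqref{cvg:support} to~\eqref{cvg-1/2pos}. The paper's own proof is a two-sentence sketch of precisely this reduction; you have in fact supplied more detail than the paper, and your identification of the delicate step (linking~\ref{JBgrow-fct} to Assumption~\ref{JBgrow} via a lower bound of the form $\psi_\gamma \gtrsim F + G^\gamma$) is the content of Lemma~\ref{bnd-psi}.
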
 
\begin{proof}
With the hypotheses \ref{L2}--\ref{F+G-coerc} and 
\ref{h0bnd}--\ref{JBgrow-fct}, 
one can check that the assumptions \ref{A0bnd}--\ref{JBgrow} are 
verified. Note that $\partial G + \nabla F$ is a demipositive operator, being 
the subdifferential of a $\Gamma_0$ function having a minimizer~\cite{bru-75}. 
The results of the corollary follow from those of Th.~\ref{the:CV}. 
\end{proof}

\subsection{The case where $A(s)$ is affine} 

In all the remainder of this section, we shall focus on the validity of 
Assumption~\ref{psi-increase}. We assume that $B = 0$, and that 
\[
A(s, x) = H(s) x + d(s) , 
\]
where $H : \Xi \to \bR^{N\times N}$ and $d : \Xi \to  E$ are two
$\mcG$-measurable functions. It is easily seen that the linear operator $A(s)$
is monotone if and only if $H(s) + H(s)^T \geq 0$ in the semidefinite ordering
of matrices, a condition that we shall assume in this subsection. 
Moreover, assuming that 
\[
\int ( \| H(s) \|^2 + \| d(s) \|^2 ) \, \mu(ds) < \infty, 
\]
the operator 
\[
\mA(x) = \Bigl( \int H(s)\, \mu(ds) \Bigr) x + \int d(s) \, \mu(ds) 
\eqdef \bs H x + \bs d  
\]
exists and is a maximal monotone operator with the domain $ E$. 
When $\bs d$ belongs to the image of $\bs H$, $Z(\mA) \neq\emptyset$, and every
$x_\star \in Z(\mA)$ has a unique $\mathcal L^2$ representation 
$(\varphi(s) = H(s) x_\star + d(s), 0)$. We have the following proposition: 

\begin{proposition} 
\label{affine} 
If $\bs H + \bs H^T > 0$, then $\bs H$ is invertible, 
$Z(\mA) = \{ x_\star \}$ with $x_\star = - \bs H^{-1} \bs d$, and 
and Assumption~\ref{psi-increase}--\ref{psi-quad} is verified. 
\end{proposition}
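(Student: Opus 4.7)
The two linear-algebraic claims are free. Since $\bs H+\bs H^T$ is positive definite, $\bs H w=0$ forces $\ps{(\bs H+\bs H^T)w,w}=2\ps{\bs H w,w}=0$, hence $w=0$; so $\bs H$ is invertible on $E=\bR^N$ and $\bs H x+\bs d=0$ has the unique solution $x_\star=-\bs H^{-1}\bs d$. The bulk of the work is to verify Assumption~\ref{psi-increase}--\ref{psi-quad}.

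My plan is to derive a closed-form expression for $\psi_\gamma(x)$ that isolates a leading quadratic $\ps{\bs H u,u}$ in $u\eqdef x-x_\star$, together with a remainder of order $\gamma$. Since $B\equiv 0$, the formula~\eqref{psig} collapses to
$$
\psi_\gamma(x)=\int \ps{A_\gamma(s,x)-\varphi(s),J_\gamma(s,x)-x_\star}\,\mu(ds)+\gamma\int \|A_\gamma(s,x)\|^2\,\mu(ds).
$$
Because $A(s,\cdot)$ is single-valued and affine, $A_\gamma(s,x)=A(s,J_\gamma(s,x))=H(s)J_\gamma(s,x)+d(s)$; combined with $\varphi(s)=H(s)x_\star+d(s)$ and the resolvent identity $(I+\gamma H(s))J_\gamma(s,x)=x-\gamma d(s)$, setting $z_s\eqdef J_\gamma(s,x)-x_\star$ and $v_s\eqdef A_\gamma(s,x)$ yields the two key relations $v_s=H(s)z_s+\varphi(s)$ and $z_s=u-\gamma v_s$. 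A short algebraic manipulation, using crucially the identity $\int\varphi\,d\mu=0$ (which expresses $x_\star\in Z(\mA)$), then gives
$$
\psi_\gamma(x)=\ps{\bs H u,u}+\gamma\int\ps{\varphi(s),v_s}\,\mu(ds)-\gamma\int\ps{H(s)v_s,u}\,\mu(ds).
$$

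Once this identity is in hand, the remainder is easy to control. Monotonicity of $H(s)$ gives $\|(I+\gamma H(s))^{-1}\|\le 1$, hence $\|v_s\|\le\|H(s)\|\,\|u\|+\|\varphi(s)\|$; combined with the integrability assumption $\int(\|H(s)\|^2+\|d(s)\|^2)\,\mu(ds)<\infty$ (which also makes $\varphi\in\mathcal L^2$), Cauchy--Schwarz yields a uniform bound of the form $|\text{remainder}|\le\gamma C(\|u\|^2+1)$ for every $\gamma\in(0,\gamma_0]$. Since $\bs H+\bs H^T>0$ gives $\ps{\bs H u,u}\ge c\|u\|^2$ for some $c>0$, shrinking $\gamma_0$ so that $\gamma_0 C\le c/2$ yields $\psi_\gamma(x)\ge (c/2)\|u\|^2-\gamma_0 C$ for all $\gamma\in(0,\gamma_0]$. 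Therefore $\Psi(x)\ge (c/2)\|x-x_\star\|^2-\gamma_0 C$ and $\liminf_{\|x\|\to\infty}\Psi(x)/\|x\|^2\ge c/2>0$, establishing Assumption~\ref{psi-increase}--\ref{psi-quad}. The delicate step is the closed-form identity for $\psi_\gamma$: the cancellations of the $\gamma\int\|v_s\|^2$ terms rely on careful bookkeeping of $z_s$ and $v_s$; once these are straight, the remaining estimates are elementary.
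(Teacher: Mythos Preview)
Your proof is correct, and the algebraic identity
\[
\psi_\gamma(x)=\ps{\bs H u,u}+\gamma\int\ps{\varphi(s),v_s}\,\mu(ds)-\gamma\int\ps{H(s)v_s,u}\,\mu(ds)
\]
checks out (the cancellation of the $\gamma\|v_s\|^2$ and $\gamma^2\ps{H(s)v_s,v_s}$ terms is exactly as you describe once one expands $\ps{H(s)z_s,z_s}$ with $z_s=u-\gamma v_s$ and uses $v_s=H(s)z_s+\varphi(s)$). Your remainder bound via $\|(I+\gamma H(s))^{-1}\|\le 1$ and Cauchy--Schwarz is also fine, and shrinking $\gamma_0$ is explicitly permitted in the paper.

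The paper's own argument is different in spirit. It immediately drops the $\gamma\int\|A_\gamma\|^2\,d\mu$ term and lower-bounds $\psi_\gamma$ by the integrated seminorm $\int\|J_\gamma(s,x)-x_\star\|^2_{(H(s)+H(s)^T)/2}\,\mu(ds)$. Writing $J_\gamma(s,x)-x_\star=(I+\gamma H(s))^{-1}\bigl((x-x_\star)-\gamma\varphi(s)\bigr)$ and applying $\|a-b\|^2\ge\tfrac12\|a\|^2-\|b\|^2$, it isolates a quadratic form $(x-x_\star)^T\bigl(\int I_\gamma(s)\,d\mu\bigr)(x-x_\star)$ with $I_\gamma(s)=(I+\gamma H(s))^{-T}\tfrac{H(s)+H(s)^T}{2}(I+\gamma H(s))^{-1}$, and then uses dominated convergence to show $\int I_\gamma\,d\mu\to(\bs H+\bs H^T)/2$, so that the smallest eigenvalue stays positive for small $\gamma$. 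Your route is more direct: by keeping both terms of $\psi_\gamma$ you get an exact identity whose leading part is $\ps{\bs H u,u}$ itself, with an explicit $O(\gamma)$ remainder, and you avoid the dominated-convergence step entirely. The paper's approach, on the other hand, would survive if the coefficient in front of $\gamma\int\|A_\gamma\|^2$ were different (or absent), since it discards that term anyway.
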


\subsection{The case where the domain $\mD$ is bounded} 
\label{D-bounded} 

\begin{proposition} 
\label{prop-Dbnd} 
Let the following hypotheses hold true: 

\begin{enumerate}[label=H{\arabic*}] 

\item The domain $\mD$ is bounded. 

\item\label{linreg-bnd} There exists a constant $C > 0$ such that
\[
\forall x\in E, \ \int d(s,x)^2 \, \mu(ds) \geq C \bs d(x)^2 . 
\]

\item There exists $x_\star \in Z(\mA+\mB)$ admitting a
${\mathcal L}^{2}$ representation. 

\item\label{cocobnd}  
There exists $c>0$ s.t. for every $x\in E$,
For all $\gamma$ small enough, 
$$
\int  \ps{B(s, x) - B(s,x_\star), x - x_\star}  \, \mu(ds)\geq 
  c\,\int \| B(s,x) - B(s,x_\star) \|^2  \, \mu(ds)  \, .
$$
\end{enumerate} 
Then, Assumption~\ref{psi-increase}--\ref{psi-quad} is satisfied. 
\end{proposition}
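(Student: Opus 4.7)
The plan is to lower-bound $\psi_\gamma(x)$ by a quadratic function of $\bs d(x)$, the distance from $x$ to $\mD$, uniformly in $\gamma\in(0,\gamma_0]$ for $\gamma_0$ small enough. Since $\mD$ is bounded, $\bs d(x)/\|x\|\to 1$ as $\|x\|\to\infty$, so this will immediately yield Assumption~\ref{psi-increase}--\ref{psi-quad}. Writing $y_s\eqdef x-\gamma B(s,x)$, the inclusions $A_\gamma(s,y_s)\in A(s,J_\gamma(s,y_s))$ and $\varphi(s)\in A(s,x_\star)$ combine with the monotonicity of $A(s)$ to give
\[
\langle A_\gamma(s,y_s)-\varphi(s),\, J_\gamma(s,y_s)-x_\star\rangle \geq 0 \quad \mu\text{-a.e.}
\]
Together with Hypothesis~\ref{cocobnd}, which lets me absorb the penalty $-6\gamma\int\|B(s,x)-B(s,x_\star)\|^2\mu(ds)$ as soon as $\gamma\leq c/6$, this reduces $\psi_\gamma(x)$ to
\[
\psi_\gamma(x)\geq \gamma\int\|A_\gamma(s,y_s)\|^2\mu(ds) + (c-6\gamma)\int\|B(s,x)-B(s,x_\star)\|^2\mu(ds).
\]

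The key remaining estimate is a lower bound on the Yosida term. Because $J_\gamma(s,y_s)\in D(s)$, one has $\|y_s-J_\gamma(s,y_s)\|\geq d(y_s,D(s))$, whence $\gamma\|A_\gamma(s,y_s)\|^2\geq d(y_s,D(s))^2/\gamma$. A triangle inequality using $\|y_s-x\|=\gamma\|B(s,x)\|$ yields $d(y_s,D(s))^2\geq \tfrac12 d(x,D(s))^2-\gamma^2\|B(s,x)\|^2$. Splitting $\|B(s,x)\|^2\leq 2\|B(s,x)-B(s,x_\star)\|^2+2\|B(s,x_\star)\|^2$, invoking Hypothesis~\ref{linreg-bnd} to obtain $\int d(x,D(s))^2\mu(ds)\geq C\bs d(x)^2$, and setting $M\eqdef\int\|B(s,x_\star)\|^2\mu(ds)<\infty$ (finite by the $\mathcal{L}^2$ representation hypothesis), a short computation gives
\[
\psi_\gamma(x)\geq \frac{C}{2\gamma}\bs d(x)^2 - 2\gamma M + (c-8\gamma)\int\|B(s,x)-B(s,x_\star)\|^2\mu(ds).
\]
For $\gamma_0\leq c/8$ the last term is nonnegative.

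The map $\gamma\mapsto \tfrac{C}{2\gamma}\bs d(x)^2-2\gamma M$ is strictly decreasing, so its infimum over $(0,\gamma_0]$ is attained at $\gamma=\gamma_0$, giving $\Psi(x)\geq \tfrac{C}{2\gamma_0}\bs d(x)^2-2\gamma_0 M$. Boundedness of $\mD$ then forces $\bs d(x)^2/\|x\|^2\to 1$ as $\|x\|\to\infty$, and therefore $\liminf_{\|x\|\to\infty}\Psi(x)/\|x\|^2\geq C/(2\gamma_0)>0$, which is Assumption~\ref{psi-increase}--\ref{psi-quad}. The main subtlety lies in the fact that the natural Yosida lower bound applies at the perturbed point $y_s=x-\gamma B(s,x)$ rather than at $x$ itself; Hypothesis~\ref{cocobnd} must be used twice—once to neutralize the $-6\gamma\int\|B(s,x)-B(s,x_\star)\|^2$ contribution already present in the definition of $\psi_\gamma$, and once to absorb the extra $2\gamma\int\|B(s,x)-B(s,x_\star)\|^2$ generated by the triangle-inequality step—which is the reason for the margin $\gamma_0\leq c/8$.
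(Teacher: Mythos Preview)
Your proof is correct and follows essentially the same route as the paper's: drop the nonnegative monotonicity term $\langle A_\gamma(s,y_s)-\varphi(s),J_\gamma(s,y_s)-x_\star\rangle$, use Hypothesis~\ref{cocobnd} to control the $B$-terms, lower-bound the Yosida contribution by $\gamma^{-1}d(x,D(s))^2$, apply Hypothesis~\ref{linreg-bnd}, and conclude from the boundedness of $\mD$. The only cosmetic difference is where the shift from $y_s=x-\gamma B(s,x)$ back to $x$ is performed: the paper uses the $1/\gamma$-Lipschitz continuity of $A_\gamma(s,\cdot)$ to pass from $\|A_\gamma(s,y_s)\|$ to $\|A_\gamma(s,x)\|$ before invoking $\gamma\|A_\gamma(s,x)\|^2\geq d(x,D(s))^2/\gamma$, whereas you first bound $\gamma\|A_\gamma(s,y_s)\|^2\geq d(y_s,D(s))^2/\gamma$ and then use the $1$-Lipschitz continuity of $d(\cdot,D(s))$ to reach $d(x,D(s))$. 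Both manipulations produce the same error term $\gamma\|B(s,x)\|^2$ and lead to the identical threshold $\gamma_0\leq c/8$ and the same final bound $\Psi(x)\geq \tfrac{C}{2\gamma_0}\bs d(x)^2 - O(\gamma_0)$.
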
  

\subsection{A case where Assumption~\ref{psi-increase}--\ref{psi-sous-lin} is 
 valid}

We close this section by providing a general condition that guarantees the
validity of Assumption~\ref{psi-increase}--\ref{psi-sous-lin}. For simplicity,
we focus on the case where $B(s) = 0$, noting that the result can be easily
extended to the case where $B(s) \neq 0$ when a cocoercivity hypothesis of the
type of Prop.~\ref{prop-Dbnd}--\ref{cocobnd} is satisfied. 

We denote by $\bs S(\rho, d)$ the sphere of $E$ with center $\rho$ and radius
$d$. We also denote by $\inter S$ the interior of a set $S$. 


\begin{proposition}
\label{coer-gal} 
Assume that $B(s) = 0$, and that there exists $x_\star \in Z(\mA) \cap
\inter{\mD}$ admitting a $\mathcal L^2$ representation 
$\varphi \in \Selec^2_{A(\cdot, x_\star)}$. Assume that there exists
a set $\Sigma \in \mcG$ such that $\mD \subset \cap_{s\in\Sigma} D(s)$,
$\mu(\Sigma) > 0$, and such that for all $s\in \Sigma$, there exists 
$\delta(s) > 0$ satisfying $\bs S(\varphi(s), \delta(s)) \subset \inter\mD$, 
and 
\[
\forall x \in \bs S(\varphi(s), \delta(s)), \ 
\inf_{y\in A(s,x)} \ps{y - \varphi(s), x - x_\star} > 0 .
\]
Then, Assumption~\ref{psi-increase}--\ref{psi-sous-lin} is satisfied. 
\end{proposition}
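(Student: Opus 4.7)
Since $B=0$, the definition~\eqref{psig} of $\psi_\gamma$ collapses to
\[
\psi_\gamma(x) = \int \ps{v_s - \varphi(s),\, z_s - x_\star}\,\mu(ds) + \gamma\int \|v_s\|^2\,\mu(ds),
\]
where $z_s := J_\gamma(s,x)$ and $v_s := A_\gamma(s,x) \in A(s, z_s)$. Both integrands are nonnegative (the first since $\varphi(s) \in A(s, x_\star)$ and $A(s,\cdot)$ is monotone), so $\psi_\gamma \geq 0$. The plan is to produce, for $\|x - x_\star\|$ large, a lower bound of the form $\psi_\gamma(x) \geq c_1\|x - x_\star\| - C_1$ with $c_1, C_1 > 0$ independent of $\gamma \in (0,\gamma_0]$, which yields \ref{psi-increase}--\ref{psi-sous-lin} upon taking the infimum over $\gamma$.

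The heart of the proof is a pointwise linear coercivity of the monotonicity gap: for each $s \in \Sigma$, there exist $c(s) > 0$ and $C(s) \geq 0$ such that $\ps{v - \varphi(s),\, z - x_\star} \geq c(s)\|z - x_\star\| - C(s)$ for all $z \in D(s)$ and $v \in A(s,z)$ with $\|z-x_\star\|$ large. First I would upgrade the pointwise hypothesis to a uniform gap
\[
\epsilon(s) := \inf_{w \in \bs S(\varphi(s),\delta(s))}\ \inf_{y \in A(s,w)} \ps{y - \varphi(s),\, w - x_\star} \,>\, 0.
\]
This follows from the compactness of the sphere together with lower semicontinuity in $w$ of the inner infimum, itself a consequence of the local boundedness of maximal monotone operators on the interior of their domain (valid since $\bs S(\varphi(s),\delta(s)) \subset \inter\mD \subset \inter D(s)$) and the closedness of $\graph A(s,\cdot)$. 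Then, for $z$ with $\|z - x_\star\|$ large, I select $w := \varphi(s) + \delta(s)u$ on the sphere with $u := (z - x_\star)/\|z - x_\star\|$, pick any $y \in A(s,w)$ (bounded by local boundedness), and combine three-point monotonicity $\ps{v - y, z - w} \geq 0$ with the uniform gap $\ps{y - \varphi(s), w - x_\star} \geq \epsilon(s)$. The near-collinearity of $x_\star, w, z$ (up to the bounded offset $\varphi(s) - x_\star$) turns the constant gap into a bound scaling linearly with $\|z - x_\star\|$.

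Given this estimate, I restrict $\Sigma$ to a positive-measure subset $\Sigma'$ on which $\delta, c, C$ are uniform, via $\sigma$-additivity applied to $\{s\in\Sigma : c(s) \geq 1/k,\ C(s) \leq k\}$. For $\|x - x_\star\|$ large and $\gamma \in (0,\gamma_0]$, partition $\Sigma'$ into $\Sigma'_1 := \{s : \|z_s - x_\star\| \geq \|x - x_\star\|/2\}$ and its complement $\Sigma'_2$. On $\Sigma'_1$, the pointwise estimate applied to $(z_s, v_s)$ injects a contribution of order $\|x - x_\star\|$ into the first integral. On $\Sigma'_2$, the identity $\gamma v_s = x - z_s$ forces $\gamma\|v_s\| \geq \|x - x_\star\|/2$, hence $\gamma\|v_s\|^2 \geq \|x - x_\star\|^2/(4\gamma_0)$, a super-linear contribution. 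Since one of $\mu(\Sigma'_1), \mu(\Sigma'_2)$ is at least $\mu(\Sigma')/2$, in either case $\psi_\gamma(x) \geq c_1\|x - x_\star\| - C_1$, and taking $\inf_\gamma$ gives the claim. The main technical obstacle is the pointwise linear coercivity: because $\bs S(\varphi(s),\delta(s))$ is centered at $\varphi(s)$ rather than $x_\star$, a ray from $x_\star$ in an arbitrary direction need not pierce the sphere unless $\delta(s) \geq \|\varphi(s) - x_\star\|$, so the three-point argument must be buttressed either by an integrated variant exploiting how the sphere varies with $s \in \Sigma$ to collectively cover every direction from $x_\star$, or by an intrinsic argument using the maximal monotone structure of $A(s,\cdot)$ near $x_\star \in \inter \mD$.
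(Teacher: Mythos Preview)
Your approach is correct and closely parallels the paper's. Both hinge on the same key lemma (your ``pointwise linear coercivity''; the paper's Lemma~\ref{ps->infty}): for each $s\in\Sigma$,
\[
\liminf_{z\in D(s),\,\|z\|\to\infty}\frac{\inf_{y\in A(s,z)}\ps{y-\varphi(s),\,z-x_\star}}{\|z\|}>0,
\]
proved via the compactness/lower-semicontinuity argument on the sphere that you outline. From there the two arguments diverge only stylistically. You extract a positive-measure subset $\Sigma'$ with uniform constants and split explicitly into $\Sigma'_1,\Sigma'_2$ according to whether $\|J_\gamma(s,x)-x_\star\|$ is comparable to $\|x-x_\star\|$. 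The paper instead sets $g(s,x)=\inf_\gamma f(\gamma,s,x)$ where $f(\gamma,s,x)$ is the integrand of $\psi_\gamma(x)/\|x\|$, shows $\liminf_{\|x\|\to\infty}g(s,x)>0$ for each $s\in\Sigma$ by a contradiction argument that implicitly contains your dichotomy (if the second term of $f$ vanishes then $\|J_{\gamma_k}(s,x_k)\|/\|x_k\|\to1$, whence Lemma~\ref{ps->infty} forces the first term to stay bounded away from zero), and concludes via $\inf_\gamma\int f\,d\mu\geq\int g\,d\mu$ and Fatou. Your route is a bit more constructive; the paper's is shorter and avoids the measurable-selection step for uniform constants.

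The ``main technical obstacle'' you flag---that the sphere $\bs S(\varphi(s),\delta(s))$ is centered at $\varphi(s)$ rather than at $x_\star$---is a typo in the statement. The paper's own Lemma~\ref{ps->infty} requires the sphere to be centered at $x_*$, and the proof of Proposition~\ref{coer-gal} invokes that lemma verbatim with $x_*=x_\star$ and $y_*=\varphi(s)$. So you should read $\bs S(x_\star,\delta(s))$ throughout; with that correction your three-point monotonicity argument (rays from $x_\star$ hitting the sphere) goes through directly, and the contingency you mention in your last sentence is not needed.
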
 

Note that the $\inf$ in the statement of this proposition is attained, as is revealed by the proof.

\section{Narrow convergence towards the DI solutions} 
\label{sec-apt} 

\subsection{Main result}

The set $C(\bR_+,E)$ of continuous functions from $\bR_+$ to $E$ is equipped
with the topology of uniform convergence on the compact intervals, who is known
to be compatible with the distance $\distC$ defined as
\[
\distC(\sx,\sy)\eqdef \sum_{n\in\mathbb N^*}2^{-n} 
\left(1\wedge \sup_{t\in [0,n]}\|\sx(t)-\sy(t)\|\right)\, . 
\]
For every $\gamma>0$, we introduce the measurable map 
$\sX_\gamma : (E^\bN, \mcB(E)^{\otimes \bN})\to (C(\bR_+,E),\mcB(C(\bR_+,E)))$,
defined for every $x=(x_n:n\in \bN)$ in $E^\bN$ as 
\[
  \sX_\gamma(x)\,:t \mapsto x_{\lfloor \frac t\gamma\rfloor} + (t/\gamma-\lfloor t/\gamma\rfloor)(x_{\lfloor \frac t\gamma\rfloor+1}-x_{\lfloor \frac t\gamma\rfloor})
\,.
\]
This map will be referred to as the linearly interpolated process. When 
$x = (x_n)$ is the process with the probability measure $\bP^{\nu,\gamma}$ 
defined above, the  distribution of the r.v.~$\sX_\gamma$ is 
$\bP^{\nu,\gamma}\sX_\gamma^{-1}$. 
If $S$ is a subset of $E$ and $\varepsilon>0$, we denote by 
$S_\varepsilon\eqdef\{a\in E: d(a,S) <\varepsilon\}$ the 
$\varepsilon$-neighborhood of $S$. The aim of the present section is to 
establish the following result: 
\begin{theorem}
\label{fb-apt}
Let Assumptions \ref{A0bnd}--\ref{BBnd} hold true. 
Let either Assumption~\ref{common} or Assumptions~\ref{linreg}-\ref{JBnd-dif}
hold true. Then, for every $\eta>0$, for every compact set $\cK\subset E$ s.t.  $\cK\cap \cD\neq \emptyset$,
\begin{equation}
\forall M \geq 0, 
\sup_{a\in \cK\cap \mD_{\gamma M}} \bP^{a,\gamma} \left( 
 \distC ( \sX_\gamma, \Phi(\Pi_{\cl(\mD)}(a), \cdot) ) > \eta \right) 
\xrightarrow[\gamma\to 0]{} 0 .  \label{eq:apt}
\end{equation}
\end{theorem}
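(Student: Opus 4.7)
I would follow the classical ODE-method strategy adapted to maximal monotone DIs: first establish tightness of the laws of $\sX_\gamma$ on $C(\bR_+, E)$, then identify any narrow cluster point as the unique solution of the DI via Skorokhod embedding and an averaging argument on the noise, and finally promote the pointwise (in initial condition) convergence to the uniform bound \eqref{eq:apt}. The starting point is to rewrite the iteration as
\[
x_{n+1} - x_n = -\gamma \bigl[ A_\gamma(\xi_{n+1}, y_n) + B(\xi_{n+1}, x_n)\bigr],
\qquad y_n \eqdef x_n - \gamma B(\xi_{n+1}, x_n),
\]
so that $\sX_\gamma$ is a piecewise affine curve whose slope on $[n\gamma,(n+1)\gamma]$ is the bracketed term. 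The identity $A_\gamma(s,y) \in A(s, J_\gamma(s,y))$ combined with the fact that $\|A_\gamma(s,y)\| \le \|A_0(s, J_\gamma(s,y))\|$ and the nonexpansiveness of $J_\gamma(s,\cdot)$ reduces the control of the slope to the moment bounds in Assumptions~\ref{A0bnd} and~\ref{BBnd}, evaluated at points drifting in a neighborhood of $\cK$.

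\textbf{Step 1 (tightness, Lem.~\ref{lem:ui}).} For $a \in \cK \cap \mD_{\gamma M}$ I would derive uniform bounds on $\bE^{a,\gamma} \|x_n - z\|^{1+\varepsilon}$ over compact time windows, for $z$ a fixed reference point in $\mD$, by expanding $\|x_{n+1}-z\|^2$ via the monotonicity of $A(\xi_{n+1},\cdot)$ and $B(\xi_{n+1},\cdot)$ at $z$, together with a martingale-plus-predictable decomposition of the increments. In tandem with the $L^{1+\varepsilon}$ control of the one-step slope, this yields Kolmogorov/Aldous-type equicontinuity of $\sX_\gamma$, hence tightness of $\{\bP^{a,\gamma}\sX_\gamma^{-1} : \gamma \in (0,\gamma_0], \, a \in \cK \cap \mD_{\gamma M}\}$. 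In the different-domains case Assumption~\ref{JBnd-dif} is used to show that the first resolvent application moves $a \in \mD_{\gamma M}$ into a point at distance $O(\gamma)$ from $\cl(D(\xi_1))$ in $L^{1+\varepsilon}$, so that the initial transient vanishes in $C(\bR_+, E)$ as $\gamma \to 0$.

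\textbf{Step 2 (identification, Lem.~\ref{lem:sko}--\ref{lem:cvth}).} Along a subsequence $\gamma_k \to 0$, Prokhorov and Skorokhod deliver an a.s. uniform limit $\sx$ on compacts. Pointwise convergence $J_\gamma(s, a) \to \Pi_{\cl(D(s))}(a)$, together with Assumption~\ref{linreg} to pass from the $D(s)$ to $\mD$, gives $\sx(0) = \Pi_{\cl \mD}(a)$. The crux is to show $\dot\sx(t) \in -(\mA+\mB)(\sx(t))$ a.e.\ via the monotone-operator characterization of maximality: for arbitrary $(z,w) \in \graph(\mA+\mB)$, I would partition the sum $\sum_{k=n_1}^{n_2-1}(x_{k+1}-x_k)$ over a mesoscopic window $[t, t+h]$ into a conditional-mean piece and a martingale-difference piece. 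Testing against $\sx(t)-z$, the monotonicity of each $A(\xi_{k+1},\cdot)$ and $B(\xi_{k+1},\cdot)$ at an approximating selection $\varphi(s) + w - \mB(z)$ for $z$, combined with the law of large numbers for the $\xi_k$, yields, in the limit $k\to\infty$ and then $h\to 0$,
\[
\ps{\dot \sx(t) + w,\, \sx(t) - z} \le 0 \quad \text{for a.e.\ } t \ge 0.
\]
Maximality of $\mA+\mB$ (Lem.~\ref{A+B:max}) then gives the DI inclusion. Uniqueness of the DI solution from $\Pi_{\cl\mD}(a)$ forces $\sx = \Phi(\Pi_{\cl\mD}(a),\cdot)$, so every narrow cluster point is the Dirac mass at this trajectory; narrow convergence to a Dirac mass is equivalent to convergence in probability for $\distC$, yielding \eqref{eq:apt} for each fixed $a$.

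\textbf{Step 3 (uniformity) and main obstacle.} All estimates above are uniform in $a$ over the compact set $\cK \cap \mD_{\gamma M}$, and the semiflow $\Phi(\cdot,t)$ is $1$-Lipschitz in the initial condition; a standard diagonal/compactness argument then upgrades pointwise convergence to the uniform bound stated in \eqref{eq:apt}. The main obstacle will be Step 2, specifically extracting the integrated operator $\mA$ out of the time-averaged Yosida-regularized noisy operator $A_{\gamma}(\xi_{k+1},\cdot)$ while the intermediate iterates $J_{\gamma}(\xi_{k+1},y_k)$ live in different domains $D(\xi_{k+1})$ rather than in $\mD$. Assumptions~\ref{linreg} and~\ref{JBgrow} are tailored to this: the former converts an average of squared distances to the domains $D(s)$ into a distance to $\mD$, while the latter bounds the discrepancy $J_\gamma(s,x) - \Pi_{\cl(D(s))}(x)$ by $\gamma(1+\psi_\gamma(x))$, so that the boundary effects are negligible after integration against the invariant-type estimates provided by Step 1.
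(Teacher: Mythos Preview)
Your outline follows the right tightness--identification--uniformity template, but there are two substantive gaps relative to the paper's argument.

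\textbf{Localization is missing.} Assumptions~\ref{A0bnd} and~\ref{BBnd} bound moments of $A_0$ and $B$ only over compact sets, and Theorem~\ref{fb-apt} does \emph{not} assume Assumption~\ref{psi-increase}. Your Step~1 claims uniform bounds on $\bE^{a,\gamma}\|x_n-z\|^{1+\varepsilon}$ via the monotonicity expansion of $\|x_{n+1}-z\|^2$, but that expansion (essentially Lemma~\ref{fb-PH}) only controls the iterates under a coercivity hypothesis you do not have here. The paper instead introduces a stopping time $\tau_R$ and works with the truncated law $\bar\bP^{a,\gamma}=\bP^{a,\gamma}C_R^{-1}$, proving the interpolated truncated process converges to a solution of the localized DI $\dot\sx\in\sH_R(\sx)$; only afterwards is $R$ chosen large enough so that this coincides with $\Phi$ on $[0,T]$. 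Without this localization your moment bounds, and hence your tightness and identification steps, are unjustified. Relatedly, in the different-domains case you need more than the ``first resolvent application'' to land near $\cl(D(\xi_1))$: the paper proves (Lem.~\ref{lem:ui}) that $\bs d(x_n)/\gamma$ stays bounded in $L^{1+\varepsilon}$ uniformly in $n$, via a contraction argument that uses Assumption~\ref{linreg} to show $\bE_n\|\Pi_{\cl(D(\xi_{n+1}))}(x_n)-\Pi_{\cl(\mD)}(x_n)\|\le\rho\,\bs d(x_n)$ with $\rho<1$. This $\bs d(x_n)/\gamma$ control is what feeds both the tightness and the identification, and you have not established it. Finally, you invoke Assumption~\ref{JBgrow} in Step~3, but that assumption is \emph{not} a hypothesis of Theorem~\ref{fb-apt}; only Assumptions~\ref{linreg}--\ref{JBnd-dif} are available here.

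\textbf{Identification: a different route.} Your Step~2 tests the discrete increments against a fixed $(z,w)\in\graph(\mA+\mB)$ via a selection $\varphi\in\Selec^1_{A(\cdot,z)}$ and an LLN over the window. This can be made to work, but the replacement of $x_k$ by $J_\gamma(\xi_{k+1},y_k)$ in the $A$-monotonicity term generates an error $\langle A_\gamma-\varphi,\,x_k-J_\gamma\rangle$ that you must control uniformly; since $A(s,\cdot)$ is not upper semicontinuous in general, this is where the argument is delicate. The paper takes a different path: it views the random driver $v_n(s,t)=h_{\gamma_n,R}(s,u_n(t))$ as a bounded sequence in $\mathcal L^{1+\varepsilon/2}(\Xi\times[0,T])$, extracts a weak limit $v$, and then uses Mazur's theorem to pass to strongly (hence a.e.) convergent convex combinations. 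The identification $(\sz(t),v(s,t))\in\graph(-A(s,\cdot)-B(s,\cdot))$ is obtained by testing against an arbitrary $(p,q)\in\graph(A(s,\cdot)+B(s,\cdot))$ and using monotonicity of the \emph{individual} operators $A(s,\cdot)+B(s,\cdot)$, never invoking $\mA+\mB$ directly. The selection integral $\sH_R$ appears only at the very end, once $v(\cdot,t)$ is recognized as an integrable selection. This Mazur/weak-limit device is the paper's key technical idea for handling non-usc $A(s,\cdot)$, and it bypasses the error-term analysis your approach would require.
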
 
Using the Yosida regularization $A_\gamma(s,x)$ of $A(s,x)$, the
iterates~\eqref{fb-sto} can be rewritten as $x_0 = a \in \mD_{\gamma M}$ and
\begin{equation} 
\label{fb-yosida} 
x_{n+1} = x_n - \gamma B(\xi_{n+1}, x_n) 
- \gamma A_\gamma(\xi_{n+1}, x_n - \gamma B(\xi_{n+1}, x_n) ) . 
\end{equation} 
Setting $h_\gamma(s,x)\eqdef - B(s, x)-  A_\gamma(s, x - \gamma B(s,x) )$,
the iterates~\eqref{fb-sto} can be cast into the same form as the one studied in~\cite{bia-hac-sal-(arxiv)16}.
The following result, which we state here mainly for the ease of the reading, is a straightforward
consequence of \cite[Th. 5.1]{bia-hac-sal-(arxiv)16}.
\begin{proposition}
Let Assumptions \ref{A0bnd}--\ref{BBnd} hold true.  Assume moreover that for 
every $s\in \Xi$, $D(s) = E$. 
Then, Eq.~(\ref{eq:apt}) holds true.
\label{prop:apt-usc}
\end{proposition}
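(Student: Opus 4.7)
The plan is to verify the hypotheses of \cite[Th.~5.1]{bia-hac-sal-(arxiv)16} for the iteration cast in the form $x_{n+1} = x_n + \gamma h_\gamma(\xi_{n+1}, x_n)$, with
\[
h_\gamma(s,x) \eqdef -B(s,x) - A_\gamma(s, x - \gamma B(s,x)).
\]
Under the hypothesis $D(s) = E$ for every $s$, one has $\mD = E$, so that $\cl(\mD) = E$, $\Pi_{\cl(\mD)}(a) = a$, and $\cK \cap \mD_{\gamma M} = \cK$; the conclusion to be proved therefore reduces to the stochastic-approximation convergence of the linearly interpolated process to the semiflow orbit $\Phi(a,\cdot)$ of the DI driven by $-(\mA+\mB)$, uniformly in the starting point $a \in \cK$.

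The easy step is the uniform $(1+\varepsilon)$-moment bound on $h_\gamma(\xi_1,x)$. Combining the $1/\gamma$-Lipschitz continuity of $A_\gamma(s,\cdot)$ with the inequality $\|A_\gamma(s,x)\| \leq \|A_0(s,x)\|$, valid for every $x \in \dom A(s) = E$, one gets the envelope
\[
\|h_\gamma(s,x)\| \leq 2\,\|B(s,x)\| + \|A_0(s,x)\|,
\]
and Assumptions~\ref{A0bnd}--\ref{BBnd} yield the desired integrability uniformly in $\gamma \in (0,\gamma_0]$ and $x$ in compact sets.

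The subtler step, which is the main obstacle, is the upper semi-continuous convergence of the mean drift $H_\gamma(x) \eqdef \int h_\gamma(s,x)\,\mu(ds)$ to $-(\mA+\mB)(x)$ along sequences $\gamma_n \downarrow 0$, $y_n \to x$. The $B$-term is easy: the continuity of $B(s,\cdot)$ and the domination from Assumption~\ref{BBnd} give $\int B(s, y_n)\,\mu(ds) \to \mB(x)$. For the Yosida term, the standard identity $A_\gamma(s, u) \in A(s, J_\gamma(s, u))$ combined with $J_{\gamma_n}(s, \cdot) \to \mathrm{id}$ on $E$ shows that any pointwise-in-$s$ cluster point of $A_{\gamma_n}(s, y_n - \gamma_n B(s, y_n))$ is an element of $A(s, x)$ by the closed-graph property of $A(s,\cdot)$. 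The envelope above provides the uniform integrability needed to pass to the limit under the $\mu$-integral, producing a cluster point of the form $-\mB(x) - \int \alpha(s)\,\mu(ds)$ with $\alpha(s) \in A(s,x)$ $\mu$-a.e., which lies in $-(\mA+\mB)(x)$ by the very definition of the selection integral.

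Together with the maximal monotonicity of $\mA + \mB$ (Lem.~\ref{A+B:max}), which provides the closed-graph/upper-semicontinuous limiting object required by the abstract framework, this verifies all hypotheses of \cite[Th.~5.1]{bia-hac-sal-(arxiv)16}, and~\eqref{eq:apt} follows. The delicate part of the argument is precisely the matching of the Yosida closure step with the exact upper-semicontinuity/measurability formulation in that reference; all the remaining ingredients are direct translations.
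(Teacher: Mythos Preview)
Your overall strategy---reduce to \cite[Th.~5.1]{bia-hac-sal-(arxiv)16} via the drift $h_\gamma(s,x)=-B(s,x)-A_\gamma(s,x-\gamma B(s,x))$, and control this drift by the envelope $\|h_\gamma(s,x)\|\le 2\|B(s,x)\|+\|A_0(s,x)\|$---is exactly the paper's. The substantive difference lies in \emph{where} you verify the convergence hypothesis. The reference's Assumption (RM) is stated pointwise in $s$: item (ii) asks that $h_{\gamma_n}(s,u_n)\to H(s,u^\star)$ for each $s$, and items (iii)--(iv) ask that the \emph{pointwise} limiting map $H(s,\cdot)=-A(s,\cdot)-B(s,\cdot)$ be proper, upper semicontinuous with closed convex values, and $\mu$-integrable. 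The paper checks exactly these: since $D(s)=E$, each $A(s,\cdot)$ is usc, hence so is $H(s,\cdot)$; and the convergence (ii) follows from $J_{\gamma_n}(s,u_n-\gamma_n B(s,u_n))\to u^\star$ together with $A_{\gamma_n}(s,w)\in A(s,J_{\gamma_n}(s,w))$ and the usc of $A(s,\cdot)$.

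You instead frame the key step as convergence of the \emph{integrated} drift $H_\gamma(x)=\int h_\gamma(s,x)\,\mu(ds)$ to $-(\mA+\mB)(x)$, and invoke the maximal monotonicity of $\mA+\mB$ as the limiting usc object. This is not what the reference theorem asks for, so as written it does not literally verify its hypotheses. Your pointwise closed-graph argument for the Yosida term is in fact exactly the paper's ingredient for (RM)-(ii); had you stopped there and noted that $D(s)=E$ makes $A(s,\cdot)$ usc (hence (RM)-(iii)--(iv) hold), you would be done. The subsequent integration step you propose---passing from pointwise cluster points $\alpha(s)\in A(s,x)$ to a selection $\int\alpha\,d\mu\in\mA(x)$ via uniform integrability---is both unnecessary here and not immediate: it requires a weak-$L^1$ compactness plus Mazur-type argument (cf.\ the proof of Lem.~\ref{lem:cvth}), which you have not supplied. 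In short, your ingredients are correct but organized at the wrong level; realigning them to the pointwise formulation of (RM) removes both the mismatch and the unjustified step.
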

\begin{proof}
It is sufficient to check that the mapping $h_\gamma$ satisfies the Assumption
(RM) of \cite[Th. 5.1]{bia-hac-sal-(arxiv)16}. Assumption i) in~\cite[As. (RM)]{bia-hac-sal-(arxiv)16}
is satisfied by definition of $h_\gamma$. As $D(\cdot)$ is a constant
equal to $E$, the operator $A(s,\cdot)$ is upper semi continuous as a 
set-valued operator \cite{phe-97}. Thus, 
$H(s,\cdot)\eqdef -A(s,\cdot)-B(s,\cdot)$ is proper,
upper semi continuous with closed convex values, and $\mu$-integrable.  Hence,
the assumptions iii-iv) in \cite[As. (RM)]{bia-hac-sal-(arxiv)16} are
satisfied. Assumption v) is satisfied by the natural properties of the
semiflow induced by the maximal monotone map $\mA+\mB$, whereas Assumption vi)
in \cite[As. (RM)]{bia-hac-sal-(arxiv)16} directly follows from the present
Assumptions~\ref{A0bnd} and \ref{BBnd} and the definition of $h_\gamma$. One should finally verify Assumption ii)
in \cite[As. (RM)]{bia-hac-sal-(arxiv)16}, which states that for every
converging sequence $(u_n,\gamma_n)\to (u^\star,0)$, $h_{\gamma_n}(s,u_n)\to
H(s,u^\star)$, for every $s\in \Xi$. To this end, it is sufficient to prove 
that 
\begin{equation}
  \label{eq:cvAgamma}
  A_{\gamma_n}(s,u_n-\gamma_n B(s,u_n)) \to A(s,u^\star)\,.
\end{equation}
Choose $\varepsilon>0$.
As $A(s,\cdot)$ is upper semi continuous, there exists $\eta>0$ s.t. $\forall u$, $\|u-u^\star\|<\eta$ implies $A(s,u)\subset A(s,u^\star)_\varepsilon$.
Let $v_n\eqdef J_{\gamma_n}(s,u_n-\gamma B(s,u_n))$. 
By the triangular inequality and the non-expansiveness of $J_{\gamma_n}$,  
$$
\|v_n-u^\star\|\leq \|u_n-u^\star\|+\gamma_n\|B(s,u_n)\|+\|J_{\gamma_n}(u^\star)-u^\star\|\,,
$$
where it is clear that each of the three terms in the right hand side tends to zero. Thus, there exists $N\in \mathbb N$
s.t. $\forall n\geq N$, $\|v_n-u^\star\|\leq \eta$, which in turn implies 
$A(s,v_n)\subset A(s,u^\star)_\varepsilon$. As $A_{\gamma_n}(s,u_n-\gamma_n B(s,u_n))\in A(s,v_n)$, the convergence \eqref{eq:cvAgamma} is established. 
\end{proof}

\subsection{Proof of Th.~\ref{fb-apt}}
In the sequel, we prove Theorem~\ref{fb-apt} under the set of Assumptions~\ref{linreg}-\ref{JBnd-dif}.
The proof in the common domain case \emph{i.e.}, when Assumption~\ref{common} holds, 
is somewhat easier and follows from the same arguments.

In order to prove Theorem~\ref{fb-apt}, we just have to weaken the assumptions of Proposition~\ref{prop:apt-usc}: 
for a given $s\in \Xi$, the domain
$D(s)$ is not necessarily equal to $E$ and the monotone operator $A(s,\,.\,)$ is not necessarily upper semi continuous.
Up to these changes, the proof is similar to the proof of \cite[Th. 3.1]{bia-hac-sal-(arxiv)16}
and the modifications are in fact confined to specific steps of the proof.

Choose a compact set $\cK\subset E$ s.t. $\cK\cap \bmD\neq \emptyset$. Choose $R>0$  s.t. $\cK$ is contained in
the ball of radius $R$.
For every $x=(x_n:n\in \bN)$ in $E^\bN$, define 
$\tau_R(x)\eqdef\inf\{n\in \bN:x_n>R\}$ and
introduce the measurable mapping $C_R:E^\bN\to E^\bN$, given by
$$
C_R(x) : n\mapsto x_n\1_{n< \tau_R(x)} + x_{\tau_R(x)}\1_{n\geq \tau_R(x)}\,.
$$
Consider the image measure $\bar \bP^{a,\gamma}\eqdef \bP^{a,\gamma} B_R^{-1}$, which corresponds
to the law of the \emph{truncated} process $B_R(x)$. 
The crux of the proof consists in showing that for every $\eta>0$ and every $M>0$,
\begin{equation}
  \label{eq:apt-trk}
\sup_{a\in \cK\cap \mD_{\gamma M}} \bar \bP^{a,\gamma} \left( 
 \distC ( \sX_\gamma, \Phi(\Pi_{\cl(\mD)}(a), \cdot) ) > \eta \right) 
\xrightarrow[\gamma\to 0]{} 0 .
\end{equation}
Eq.~(\ref{eq:apt-trk}) is the counterpart of \cite[Lemma 4.3]{bia-hac-sal-(arxiv)16}. Once it has been proven,
the conclusion follows verbatim from \cite[Section 4, End of the proof]{bia-hac-sal-(arxiv)16}. 
Our aim is thus to establish Eq.~(\ref{eq:apt-trk}).
The proof follows the same steps as the proof of 
\cite[Lemma 4.3]{bia-hac-sal-(arxiv)16} up to some confined changes. Here, the 
steps of the proof which do not need any modification are recalled rather 
briefly (we refer the reader to \cite{bia-hac-sal-(arxiv)16} for the details). 
On the other hand, the parts which require an adaptation are explicitly stated 
as lemmas, whose detailed proofs are provided in Appendix~\ref{anx:apt}. 

Define 
$h_{\gamma,R}(s,a)\eqdef h_{\gamma}(s,a)\1_{\|a\|\leq R}$.
First, we recall the following decomposition, established in \cite{bia-hac-sal-(arxiv)16}:
$$
\sX_\gamma = \Pi_0 + \sG_{\gamma,R}\circ \sX_\gamma + \sX_\gamma\circ \Delta_{\gamma,R}\,,
$$
$\bar \bP^{a,\gamma}$ almost surely, where $\Pi_0:E^\bN\to C(\bR_+,E)$, $\sG_{\gamma,R}:C(\bR_+,E)\to C(\bR_+,E)$ and $\Delta_{\gamma,R}:E^\bN\to E^\bN$ are the mappings respectively defined by
\begin{align*}
  &\Pi_0(x):t\mapsto x_0 \\
  &\Delta_{\gamma,R}(x):n\mapsto (x_n-x_{0}) - \gamma \sum_{k = 0}^{n-1} \int h_{\gamma,R}(s,x_k)\mu(ds) \\
  & \sG_{\gamma,R}(\sx): t\mapsto \int_0^t\int h_{\gamma,R}(s, \sx(\gamma \lfloor  u/\gamma\rfloor)) \, \mu(ds) du\ ,
\end{align*}
for every $x=(x_n:n\in \bN)$ and every $\sx\in C(\bR_+,E)$ .

\begin{lemma}
  \label{lem:ui}
For all $\gamma\in (0,\gamma_0]$ and all $x\in E^\bN$, define $Z^\gamma_{n+1}(x) \eqdef \gamma^{-1}(x_{n+1}-x_{n})$. 
There exists $\varepsilon>0$ such that:
\begin{align}
&\sup_{{n\in\bN, a\in \cK\cap \mD_{\gamma M} , \gamma\in (0,\gamma_0]}} \bar \bE^{a,\gamma}\left( \left(\|Z^\gamma_{n}\|+\frac{\bs d(x_{n})}{\gamma}\1_{\|x_{n}\|\leq R}\right)^{1+\varepsilon}\right)< +\infty \label{eq:ui} 
\end{align}
\end{lemma}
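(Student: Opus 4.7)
The plan is to bound the two summands $\|Z^\gamma_n\|^{1+\varepsilon}$ and $(\bs d(x_n)/\gamma\,\1_{\|x_n\|\leq R})^{1+\varepsilon}$ separately, uniformly in $(n,a,\gamma)$. Using the Yosida rewriting~\eqref{fb-yosida}, I first decompose $Z^\gamma_n = -A_\gamma(\xi_n, y_{n-1}) - B(\xi_n, x_{n-1})$ with $y_{n-1} = x_{n-1}-\gamma B(\xi_n,x_{n-1})$. Under $\bar\bP^{a,\gamma}$ the non-trivial contributions arise only for $n\leq \tau_R$, on which $\|x_{n-1}\|\leq R$, so Assumption~\ref{BBnd} handles the $B$-term in $L^{1+\varepsilon}$ uniformly. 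For the Yosida term, combining $\|A_\gamma(s,z)\|\leq \|A_0(s,z)\|$ for $z\in D(s)$ with the $1/\gamma$-Lipschitzness of $A_\gamma(s,\cdot)$ gives the key inequality $\|A_\gamma(s,y)\| \leq \|A_0(s,z)\| + \|y-z\|/\gamma$ for every $z\in D(s)$. Evaluated at $z=\bar x_{n-1}:=\Pi_{\cl(\mD)}(x_{n-1})$, which belongs to $D(\xi_n)$ $\mu$-a.s.\ (up to a harmless density approximation if $\bar x_{n-1}\in\cl(\mD)\setminus\mD$), this yields $\|A_\gamma(\xi_n,y_{n-1})\|\leq \|A_0(\xi_n,\bar x_{n-1})\| + \bs d(x_{n-1})/\gamma + \|B(\xi_n,x_{n-1})\|$, whose $A_0$-contribution is $L^{1+\varepsilon}$-bounded by Assumption~\ref{A0bnd}, since $\bar x_{n-1}$ remains in a compact subset of $\cl(\mD)$ under the truncation.

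A parallel computation controls $\bs d(x_n)/\gamma$: writing $x_n=J_\gamma(\xi_n,y_{n-1})$ and exploiting the nonexpansiveness of $J_\gamma(\xi_n,\cdot)$ together with the triangle inequality,
$$
\bs d(x_n) \leq \|x_n-\bar x_{n-1}\| \leq \|y_{n-1}-\bar x_{n-1}\| + \|J_\gamma(\xi_n,\bar x_{n-1})-\Pi_{\cl(D(\xi_n))}(\bar x_{n-1})\|,
$$
whose second summand divided by $\gamma$ has uniform $(1+\varepsilon)$-moment by Assumption~\ref{JBnd-dif}, and whose first is at most $\bs d(x_{n-1})+\gamma\|B(\xi_n,x_{n-1})\|$. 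Both decompositions thus reduce the entire statement to a uniform $L^{1+\varepsilon}$ bound on $\bs d(x_{n-1})/\gamma$; the initial datum $a\in\cK\cap\mD_{\gamma M}$ provides $\bs d(x_0)/\gamma\leq M$.

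The main obstacle is closing this loop uniformly in $n$: iterating the two recursions naively yields only linear-in-$n$ growth, since $J_\gamma(\xi_n,\cdot)$ is merely nonexpansive. This is where Assumption~\ref{linreg} becomes essential. Applying $\int d(x,D(s))^2\mu(ds)\geq C\bs d(x)^2$ at $x=x_n$ and using $d(x_n,D(s))\leq \|x_n-J_\gamma(s,x_n)\|=\gamma\|A_\gamma(s,x_n)\|$ (which holds since $J_\gamma(s,x_n)\in D(s)$), I obtain $\bs d(x_n)^2/\gamma^2 \leq C^{-1}\int\|A_\gamma(s,x_n)\|^2\mu(ds)$. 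Bounding the right-hand side via the Yosida inequality above with $y=x_n$ and $z=\bar x_n$, applying Young's inequality with a parameter tuned to absorb the resulting $\bs d(x_n)^2$ self-term, and invoking Assumption~\ref{A0bnd} to control the residual $\int\|A_0(s,\bar x_n)\|^2\mu(ds)$, I obtain, after taking conditional expectation, a Foster--Lyapunov drift
$$
\bar\bE^{a,\gamma}\bigl[(\bs d(x_n)/\gamma)^2\,\big|\,\mcF_{n-1}\bigr] \leq (1-c)\,(\bs d(x_{n-1})/\gamma)^2 + C
$$
for some $c\in(0,1)$. Iterating this drift yields a uniform bound on $\bar\bE^{a,\gamma}[(\bs d(x_n)/\gamma)^2]$, whence on its $(1+\varepsilon)$-moment for $\varepsilon>0$ sufficiently small by interpolation with the trivial $L^\infty$ bound available under truncation. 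Combined with the decomposition of $\|Z^\gamma_n\|$ established above, this proves~\eqref{eq:ui}.
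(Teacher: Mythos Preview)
Your decomposition of $\|Z^\gamma_n\|$ and the preliminary bound on $\bs d(x_n)$ via $\bar x_{n-1}$ are fine and parallel the paper's argument. The gap is in the Foster--Lyapunov step. From Assumption~\ref{linreg} you correctly obtain
\[
\frac{\bs d(x_n)^2}{\gamma^2}\leq C^{-1}\int \|A_\gamma(s,x_n)\|^2\,\mu(ds),
\]
but since $d(x,D(s))\leq \bs d(x)$ always, the linear-regularity constant satisfies $C\leq 1$, hence $C^{-1}\geq 1$. When you then plug in $\|A_\gamma(s,x_n)\|\leq \|A_0(s,\bar x_n)\|+\bs d(x_n)/\gamma$ and apply Young's inequality, the self-term $\bs d(x_n)^2/\gamma^2$ reappears on the right with coefficient at least $C^{-1}\geq 1$, so it cannot be absorbed into the left-hand side. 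Worse, this inequality involves only $\bs d(x_n)$: no $\bs d(x_{n-1})$ appears, so there is no recursion, and the claimed drift $\bar\bE^{a,\gamma}[(\bs d(x_n)/\gamma)^2\mid\mcF_{n-1}]\leq (1-c)(\bs d(x_{n-1})/\gamma)^2+C$ does not follow. (The interpolation argument at the end also fails: under the truncation $\|x_n\|\leq R$ only $\bs d(x_n)$ is bounded, not $\bs d(x_n)/\gamma$.)

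The paper obtains the contraction by a different decomposition: it passes through $\Pi_{\cl(D(\xi_{n+1}))}(x_n)$ rather than $\Pi_{\cl(\mD)}(x_{n-1})$, writing
\[
\bs d(x_{n+1})\leq \|x_{n+1}-\Pi_{\cl(D(\xi_{n+1}))}(x_n)\|+\|\Pi_{\cl(D(\xi_{n+1}))}(x_n)-\Pi_{\cl(\mD)}(x_n)\|\,.
\]
The first summand is $O(\gamma)$ via Assumptions~\ref{JBnd-dif} and~\ref{BBnd}. For the second, firm nonexpansiveness of $\Pi_{\cl(D(s))}$ gives $\|\Pi_{\cl(D(s))}(x_n)-\Pi_{\cl(\mD)}(x_n)\|^2\leq \bs d(x_n)^2-d(x_n,D(s))^2$, and now Assumption~\ref{linreg} is used in the productive direction: integrating in $s$ yields $\int\|\Pi_{\cl(D(s))}(x_n)-\Pi_{\cl(\mD)}(x_n)\|^2\mu(ds)\leq (1-C)\bs d(x_n)^2$ with $1-C<1$. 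This is what produces the genuine contraction factor $\rho<1$ in the $(1+\varepsilon)$-moment recursion.
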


Using \cite[Lemma 4.2]{bia-hac-sal-(arxiv)16}, the uniform integrability
condition~(\ref{eq:ui}) implies\footnote{Lemma 4.2 of
\cite{bia-hac-sal-(arxiv)16} was actually shown with condition $[a\in \cK]$
instead of  $[a\in\cK\cap \mD_{\gamma M}]$, but the proof can be easily adapted
to the latter case.} 
that $\{\bar \bP^{a,\gamma}\sX_\gamma^{-1}: a\in  \cK\cap \mD_{\gamma M}, 
 \gamma\in (0,\gamma_0]\}$ is tight, and for any 
$T>0$,
\begin{equation}
\sup_{a\in \cK\cap \mD_{\gamma M}}\bar \bP^{a,\gamma}(\|\sX_\gamma\circ \Delta_{\gamma,R}\|_{\infty,T}>\varepsilon) \xrightarrow[]{\gamma\to 0}0\,,
\label{eq:rate-of-change-continuous}
\end{equation}
where the notation $\|\sx\|_{\infty,T}$ stands for the uniform norm of $\sx$ on $[0,T]$.

\begin{lemma}
  \label{lem:sko}
For an arbitrary sequence $(a_n,\gamma_n)$ such that 
$a_n\in \cK\cap \mD_{\gamma_n M}$ and $\gamma_n \to 0$, there 
exists a subsequence (still denoted as $(a_n,\gamma_n)$) such that 
$(a_n,\gamma_n)\to (a^*,0)$ for some $a^*\in \cK\cap \bmD$,
and there exists r.v. $\sz$ and $(\sx_n:n\in \bN)$ defined on some probability space $(\Omega',\mcF',\bP')$
into $C(\bR_+,E)$ s.t. $\sx_n$ has the distribution $\bar \bP^{a_n,\gamma_n}\sX_{\gamma_n}^{-1}$ and $\sx_n(\omega)\to \sz(\omega)$
for all $\omega\in \Omega'$.
Moreover, defining
$$
 u_n(t) \eqdef \sx_n(\gamma_n\lfloor t/\gamma_n\rfloor)\ ,
$$
the sequence $(a_n,\gamma_n)$ and $(\sx_n)$ can be chosen in such a way that 
the following holds $\bP'$-a.e.
\begin{equation}
\sup_n \int_0^T \left(\frac{\bs d(u_{n}(t))}{\gamma_n}\1_{\|u_{n}(t)\|\leq R}\right)^{1+\frac\varepsilon 2}dt\,<+\infty\quad (\forall T>0)\,,
\label{eq:int-d-bornee}
\end{equation}
where $\varepsilon>0$ is the constant introduced in Lem.~\ref{lem:ui}.
\end{lemma}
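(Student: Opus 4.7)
The plan is to combine the tightness recorded just before the lemma statement with a Skorokhod representation argument, enlarged so as to extract the pathwise bound \eqref{eq:int-d-bornee}. First, since $\cK$ is compact, a subsequential extraction gives $a_n\to a^\star\in \cK$; the inclusion $a_n\in \mD_{\gamma_n M}$ forces $\bs d(a_n)\leq \gamma_n M\to 0$, so $a^\star\in\cl(\mD)=\bmD$. The tightness of $\{\bar\bP^{a_n,\gamma_n}\sX_{\gamma_n}^{-1}\}_n$ on $C(\bR_+,E)$ has already been recorded as a consequence of Lem.~\ref{lem:ui} and \cite[Lem.~4.2]{bia-hac-sal-(arxiv)16}.

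To handle the extra integral bound, I introduce, for each $T\in \bN^*$, the auxiliary real-valued random variable
\[
\tilde Y_n^T \eqdef \int_0^T\!\Bigl(\tfrac{\bs d(u_n(t))}{\gamma_n}\1_{\|u_n(t)\|\leq R}\Bigr)^{1+\varepsilon}dt\, ,
\]
which is a measurable functional of $\sX_{\gamma_n}$ (recall that $u_n(t)=x_{\lfloor t/\gamma_n\rfloor}$ on the canonical path). Since $u_n$ is piecewise constant on intervals of length $\gamma_n$, Fubini and Lem.~\ref{lem:ui} yield $\sup_n\bar\bE^{a_n,\gamma_n}[\tilde Y_n^T]\leq C\,T$, hence tightness of $(\tilde Y_n^T)_n$ in $\bR_+$ for each $T$. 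The joint laws of $\bigl(\sX_{\gamma_n},(\tilde Y_n^T)_{T\in \bN^*}\bigr)$ are therefore tight on the Polish space $C(\bR_+,E)\times \bR_+^{\bN^*}$. By Prokhorov, extract a further (diagonal) subsequence along which they converge weakly, and by Skorokhod realize the convergence a.s.\ on some $(\Omega',\mcF',\bP')$ as $(\sx_n,(Y_n^T)_T)\to(\sz,(Y_\infty^T)_T)$, with $\sx_n$ having the prescribed marginal law $\bar\bP^{a_n,\gamma_n}\sX_{\gamma_n}^{-1}$ and $Y_n^T=\tilde Y_n^T(\sx_n)$ $\bP'$-a.s.

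Fatou applied to the uniform $L^1$ bound gives $\bE'[Y_\infty^T]\leq CT<\infty$, so $Y_\infty^T$ is $\bP'$-a.s.\ finite; combined with $Y_n^T\to Y_\infty^T$ $\bP'$-a.s., this yields $\sup_n Y_n^T<\infty$ $\bP'$-a.s.\ for every $T\in\bN^*$. Applying H\"older's inequality with $\theta=(1+\varepsilon/2)/(1+\varepsilon)\in(0,1)$ gives
\[
\int_0^T\Bigl(\tfrac{\bs d(u_n(t))}{\gamma_n}\1_{\|u_n(t)\|\leq R}\Bigr)^{1+\varepsilon/2}dt\ \leq\ T^{1-\theta}\,(Y_n^T)^{\theta}\, ,
\]
and intersecting the corresponding full events over $T\in \bN^*$, then bounding arbitrary $T>0$ by $\lceil T\rceil$ using monotonicity, yields \eqref{eq:int-d-bornee} on a single $\bP'$-full event.

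The main obstacle is precisely this last step: Lem.~\ref{lem:ui} only provides an $L^1$ control on $\tilde Y_n^T$, which does not by itself deliver an a.s.\ $\sup_n$ bound. The remedy is the two-fold trick above: (i) adjoin the countable family $(\tilde Y_n^T)_{T\in \bN^*}$ to the Skorokhod coupling so that a.s.\ convergence to finite limits automatically produces a.s.\ boundedness along the extracted subsequence; and (ii) absorb the gap between the exponents $1+\varepsilon$ and $1+\varepsilon/2$ through H\"older's inequality, which is where the strict inequality $\varepsilon/2<\varepsilon$ in Lem.~\ref{lem:ui} becomes essential.
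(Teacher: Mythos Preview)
Your argument is correct. The convergence $a_n\to a^\star\in\cK\cap\bmD$ is as in the paper; the tightness of the path laws is already on record; the adjunction of the scalars $(\tilde Y_n^T)_{T\in\bN^*}$ to the Skorokhod coupling is legitimate because $\tilde Y_n^T$ is a measurable functional of $\sX_{\gamma_n}$ (the grid values $x_k=\sX_{\gamma_n}(k\gamma_n)$ are recoverable from the interpolated path), so the joint law is supported on the graph of that functional and the identity $Y_n^T=\tilde Y_n^T(\sx_n)$ transfers to the Skorokhod copies. The Fatou/finite-limit step then gives $\sup_n Y_n^T<\infty$ a.s., and your H\"older step with exponent $\theta=(1+\varepsilon/2)/(1+\varepsilon)$ is exactly right.

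The paper takes a different route: it lifts the canonical process to $E\times\bR$ by tracking the running sums $Y_n^\gamma(x)=\sum_{k<n}\gamma^{-\varepsilon/2}\bs d(x_k)^{1+\varepsilon/2}\1_{\|x_k\|\leq R}$, and re-applies the interpolation-tightness lemma \cite[Lem.~4.2]{bia-hac-sal-(arxiv)16} to the $(E\times\bR)$-valued process. The $\varepsilon$-to-$\varepsilon/2$ gap is used there to obtain uniform integrability of the enlarged increments $\tilde Z_n^\gamma$, rather than via H\"older at the end. After Skorokhod, the paper identifies the second component $\sy_n$ with the map $t\mapsto\int_0^t(\gamma_n^{-1}\bs d(u_n(s))\1_{\|u_n(s)\|\leq R})^{1+\varepsilon/2}ds$ and reads off $\sup_n\sy_n(T)<\infty$ from convergence in $C(\bR_+,\bR)$. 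Your approach is a bit more elementary in that it avoids re-invoking the interpolation-tightness machinery in the enlarged state space and replaces it by a bare $L^1$ bound plus Markov's inequality on countably many scalars; the paper's approach has the minor advantage of delivering the bound simultaneously for all $T$ as a continuous function, without the final countable-to-arbitrary step.
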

The limit $\sz$ satisfies the following:
\begin{lemma}
\label{lem:z-in-D}
Introduce the open ball $B_R\eqdef \{u\in E:\|u\|<R\}$.  The following holds $\bP'$-a.e.:
\begin{equation}
\forall t\geq 0,\ \sz(t)\in \bmD\cup B_R^c\,.\label{eq:z-in-D}
\end{equation}
\end{lemma}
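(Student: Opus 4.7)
The plan is to argue by contradiction using the integrability bound \eqref{eq:int-d-bornee} from Lem.~\ref{lem:sko}. I would fix a $\bP'$-full-measure event on which both $\sx_n(\omega)\to\sz(\omega)$ uniformly on compact subsets of $\bR_+$ and the bound \eqref{eq:int-d-bornee} hold, and work with such an $\omega$ throughout. Assume, for contradiction, that there exists $t_0\geq 0$ with $\sz(t_0)\notin \bmD\cup B_R^c$, meaning $\bs d(\sz(t_0))>0$ and $\|\sz(t_0)\|<R$.

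First, I would show that $u_n(t)\to \sz(t)$ for every $t\geq 0$. Since $|t - \gamma_n\lfloor t/\gamma_n\rfloor|\leq \gamma_n\to 0$, this follows by combining the uniform convergence of $\sx_n$ to $\sz$ on compact intervals with the continuity of $\sz$. Because $\bs d(\cdot)$ is $1$-Lipschitz and $\|\cdot\|$ is continuous, this yields $\bs d(u_n(t))\to \bs d(\sz(t))$ and $\|u_n(t)\|\to \|\sz(t)\|$ pointwise.

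Next, using the continuity of $\sz$ together with $\bs d(\sz(t_0))>0$ and $\|\sz(t_0)\|<R$, I would extract an interval $I\subset \bR_+$ of positive Lebesgue measure containing $t_0$ and constants $c_1,c_2>0$ such that $\bs d(\sz(t))\geq c_1$ and $\|\sz(t)\|\leq R-c_2$ for all $t\in I$ (this is where the boundary case $t_0=0$ is handled by taking a one-sided interval). Then for each $t\in I$ and $n$ large enough (depending on $t$), we have $\|u_n(t)\|\leq R$ and $\bs d(u_n(t))\geq c_1/2$, so
\[
\liminf_{n\to\infty} \frac{\bs d(u_n(t))}{\gamma_n}\1_{\|u_n(t)\|\leq R} = +\infty \quad \text{for every } t\in I.
\]
Choosing any $T$ with $I\subset [0,T]$ and applying Fatou's lemma to the nonnegative integrands would give
\[
\liminf_{n\to\infty} \int_0^T \left(\frac{\bs d(u_n(t))}{\gamma_n}\1_{\|u_n(t)\|\leq R}\right)^{1+\varepsilon/2} dt = +\infty,
\]
contradicting \eqref{eq:int-d-bornee}. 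Hence $\sz(t)\in \bmD\cup B_R^c$ for every $t\geq 0$.

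The only delicate point I anticipate is ensuring that the contradiction is drawn on a set of positive Lebesgue measure rather than at the single point $t_0$, but this is handled cleanly by the continuity of $\sz$ together with the openness of the conditions $\bs d(\cdot)>0$ and $\|\cdot\|<R$. Everything else reduces to pointwise convergence plus Fatou, so no fine measurability or uniform-integrability subtleties should arise beyond those already absorbed into the hypothesis \eqref{eq:int-d-bornee}.
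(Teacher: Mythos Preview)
Your argument is correct, but it follows a genuinely different route from the paper's proof.

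The paper works \emph{in expectation for each fixed $t$}: it applies Fatou's lemma over the probability space $(\Omega',\mcF',\bP')$ to obtain
\[
\bE'\bigl(\bs d(\sz(t))\,\1_{\|\sz(t)\|<R}\bigr)\ \leq\ \liminf_n\,\bE'\bigl(\bs d(u_n(t))\,\1_{\|u_n(t)\|\leq R}\bigr),
\]
and then bounds the right-hand side directly by the moment estimate of Lem.~\ref{lem:ui} (not Lem.~\ref{lem:sko}), which forces it to vanish since $\gamma_n\to 0$. This gives $\sz(t)\in\bmD\cup B_R^c$ for each $t$ on a $t$-dependent full-measure event, after which the paper invokes the continuity of $\sz$ and the closedness of $\bmD\cup B_R^c$ to make the exceptional set independent of~$t$.

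You instead work \emph{pathwise}: you fix $\omega$ in a single full-measure event, use the continuity of $\sz(\omega)$ to thicken a hypothetical bad point $t_0$ into an interval $I$, and then apply Fatou over the time variable to contradict the almost-sure time-integral bound \eqref{eq:int-d-bornee} from Lem.~\ref{lem:sko}. This has the advantage that the ``for all $t$'' conclusion is built in from the start, so no separate upgrade step is needed. The paper's version is marginally more economical in that it appeals directly to the raw moment bound of Lem.~\ref{lem:ui} rather than to its pathwise consequence \eqref{eq:int-d-bornee}, but both arguments are short and natural.
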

Define \begin{equation*}
 v_n(s,t)\eqdef h_{\gamma_n,R}(s, u_n(t))\,.
\end{equation*}
Thanks to the convergence \eqref{eq:rate-of-change-continuous},
the following holds $\bP'$-a.e.: 
\begin{equation}
{\mathsf z}(t) = \sz(0)
 + \lim_{n\to\infty} \int_0^t\int_\Xi v_n(s,u)\, \mu(ds) \, du\,
 \qquad (\forall t\geq 0) \,.\label{eq:zintegral}
\end{equation}
We now select an $\omega\in \Omega'$ s.t. the events (\ref{eq:int-d-bornee}), (\ref{eq:z-in-D}) and (\ref{eq:zintegral}) are all realized, 
and omit the dependence in $\omega$ in the sequel.
Otherwise stated, $u_n$ and $v_n$ are handled from now on as determinitic functions, and no longer as random variables.
The aim of the next lemmas is to analyze the integrand $v_n(s,u)$.
Let $H_R(s,a)\eqdef -A(s,a)-B(s,a)$ if $\|a\|<R$, 
and $H_R(s,a)\eqdef E$ otherwise.
Denote the corresponding selection integral as $\sH_R(a) = \int H_R(s,a)\, \mu(ds)$. 
\begin{lemma}
  \label{lem:cv-HR}
For every $s$ $\mu$-a.e., it holds that for every $t\geq 0$,
$(u_n(t),v_n(s,t))\to \graph(H_R(s,\,.\,))$.
\end{lemma}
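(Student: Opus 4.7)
The plan is to first fix a $\mu$-null exceptional set outside which three facts hold simultaneously. (i) $B(s,\cdot)$ is continuous on $E$; this is true for every $s$ by hypothesis. (ii) For every $x \in E$, $J_\gamma(s,x) \to \Pi_{\cl(D(s))}(x)$ as $\gamma \downarrow 0$; this is a standard pointwise property recalled in Section~\ref{basic}, so holds for every $s$. (iii) $\bmD \subset \cl(D(s))$. For (iii) I would pick a countable dense subset $Q$ of $\mD$; by the definition of the essential intersection, each $x \in Q$ lies in $D(s)$ for $s$ outside some $\mu$-null set, so outside the countable union of these exceptional sets one has $Q \subset D(s)$, whence $\bmD = \cl(Q) \subset \cl(D(s))$. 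The goal is then to show, for $s$ in this good set and every $t \geq 0$, that $d((u_n(t), v_n(s,t)), \graph(H_R(s,\cdot))) \to 0$.

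For fixed $t \geq 0$, the uniform convergence $\sx_n \to \sz$ on compact intervals, the continuity of $\sz$, and $\gamma_n \to 0$ yield $u_n(t) = \sx_n(\gamma_n \lfloor t/\gamma_n \rfloor) \to \sz(t)$. By Lem.~\ref{lem:z-in-D}, $\sz(t) \in \bmD \cup B_R^c$. I would next introduce the Yosida decomposition: set $w_n \eqdef u_n(t) - \gamma_n B(s, u_n(t))$, $\tilde w_n \eqdef J_{\gamma_n}(s, w_n)$, and $y_n \eqdef A_{\gamma_n}(s, w_n) = \gamma_n^{-1}(w_n - \tilde w_n)$, so that $(\tilde w_n, y_n) \in \graph(A(s,\cdot))$ and consequently $(\tilde w_n, -B(s, \tilde w_n) - y_n) \in \graph(-A(s,\cdot) - B(s,\cdot))$, while $v_n(s,t) = \1_{\|u_n(t)\| \leq R}(-B(s, u_n(t)) - y_n)$.

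The principal case is $\sz(t) \in \bmD$ with $\|\sz(t)\| < R$. Since $w_n \to \sz(t)$, non-expansiveness of $J_{\gamma_n}$ combined with property (ii) evaluated at $\sz(t)$, together with $\Pi_{\cl(D(s))}(\sz(t)) = \sz(t)$ coming from (iii), imply $\tilde w_n \to \sz(t)$; in particular $\|\tilde w_n\| < R$ eventually, so $(\tilde w_n, -B(s, \tilde w_n) - y_n) \in \graph(H_R(s,\cdot))$. The distance from $(u_n(t), v_n(s,t))$ to this graph point is then bounded by $\|u_n(t) - \tilde w_n\| + \|B(s, u_n(t)) - B(s, \tilde w_n)\|$, which tends to $0$ by continuity of $B(s,\cdot)$.

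The main obstacle is the boundary regime $\|\sz(t)\| \geq R$, where the definition of $H_R$ jumps to the whole space on $\{\|a\| \geq R\}$ and the Yosida identification need no longer supply a graph point of $H_R$. If $\|\sz(t)\| > R$, then for large $n$ also $\|u_n(t)\| > R$, whence $v_n(s,t) = 0$ and $H_R(s, u_n(t)) = E$, so $(u_n(t), 0) \in \graph(H_R(s,\cdot))$ trivially. If $\|\sz(t)\| = R$, I would split: along any subsequence with $\|u_n(t)\| \geq R$ one again has $H_R(s, u_n(t)) = E$ and there is nothing to show; along a subsequence with $\|u_n(t)\| < R$, since $\|u_n(t)\| \to R > 0$, the inflated point $a_n \eqdef (1 + 1/n) u_n(t)$ has $\|a_n\| > R$ eventually, so $(a_n, v_n(s,t)) \in \graph(H_R(s, \cdot))$ while $\|a_n - u_n(t)\| = \|u_n(t)\|/n \to 0$, completing the argument.
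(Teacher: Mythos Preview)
Your principal-case argument for $\|\sz(t)\|<R$ is essentially the paper's: both use the Yosida identification $(\tilde w_n, A_{\gamma_n}(s,w_n))\in\graph(A(s,\cdot))$, establish $\tilde w_n\to\sz(t)$ (equivalently $\|u_n(t)-\tilde w_n\|\to 0$) via non-expansiveness of the resolvent and $J_{\gamma_n}(s,\sz(t))\to\sz(t)$, and conclude by continuity of $B(s,\cdot)$. Your treatment of the $\mu$-exceptional set, via a countable dense subset of $\mD$, is in fact more explicit than the paper, which simply fixes ``any $s$ s.t.\ $\mD\subset D(s)$'' without justification.

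There is, however, a small gap in your boundary argument at $\|\sz(t)\|=R$. The inflated point $a_n=(1+1/n)u_n(t)$ need not satisfy $\|a_n\|>R$ eventually: along the subsequence you only know $\|u_n(t)\|\to R$ from below, and if this convergence is slower than $1/n$ (e.g.\ $\|u_n(t)\|=R-n^{-1/2}$), then $(1+1/n)\|u_n(t)\|<R$ for all large $n$. The paper disposes of the whole range $\|\sz(t)\|\geq R$ in one line, and more simply: since $H_R(s,\sz(t))=E$, the point $(\sz(t),v_n(s,t))$ already lies in $\graph(H_R(s,\cdot))$, so the distance from $(u_n(t),v_n(s,t))$ to the graph is at most $\|u_n(t)-\sz(t)\|\to 0$. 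An equally easy repair of your idea is to take $a_n=R\,u_n(t)/\|u_n(t)\|$, which has norm exactly $R$ and satisfies $\|a_n-u_n(t)\|=\bigl|R-\|u_n(t)\|\bigr|\to 0$.
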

Consider some $T>0$ and let $\lambda_T$ represent the Lebesgue measure on the interval $[0,T]$.
To simplify notations, we set $\cL^{1+\varepsilon}_E\eqdef \cL^{1+\varepsilon}( \Xi\times [0,T], \mcG\otimes\mcB([0,T]), \mu\otimes\lambda_T; E)$.
\begin{lemma}
  \label{lem:v_bounded}
The sequence $(v_n:n\in \bN)$ forms a bounded subset of $\cL^{1+\varepsilon/2}_E$.
\end{lemma}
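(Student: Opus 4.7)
The plan is to bound the integrand $v_n(s,t)=h_{\gamma_n,R}(s,u_n(t))$ pointwise in $(s,t)$ by three simpler quantities and then integrate each of them separately. Starting from $h_\gamma(s,a)=-B(s,a)-A_\gamma(s,a-\gamma B(s,a))$ and the identity $\gamma A_\gamma(s,y)=y-J_\gamma(s,y)$, inserting the projection $\Pi_{\cl(D(s))}(a)$ via two triangle inequalities and using the non-expansiveness of $J_\gamma(s,\cdot)$ to pass from $J_\gamma(s,a-\gamma B(s,a))$ to $J_\gamma(s,a)$ at the cost of $\gamma\|B(s,a)\|$, one obtains
\begin{equation*}
\gamma\|A_\gamma(s,a-\gamma B(s,a))\|\leq d(a,D(s))+2\gamma\|B(s,a)\|+\|J_\gamma(s,a)-\Pi_{\cl(D(s))}(a)\|\,,
\end{equation*}
and therefore, on $\{\|a\|\leq R\}$,
\begin{equation*}
\|h_{\gamma,R}(s,a)\|\leq 3\|B(s,a)\|+\frac{d(a,D(s))}{\gamma}+\frac{\|J_\gamma(s,a)-\Pi_{\cl(D(s))}(a)\|}{\gamma}\,.
\end{equation*}

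I would then raise this estimate to the power $1+\varepsilon/2$ (with $\varepsilon$ the constant of Lemma~\ref{lem:ui}, shrunk if necessary so that it also fits Assumptions~\ref{BBnd} and~\ref{JBnd-dif} applied to the compact set $\bar B(0,R)$) and integrate in $\mu(ds)\,dt$ on $\Xi\times[0,T]$ after substituting $a=u_n(t)$, the indicator $\1_{\|u_n(t)\|\leq R}$ being absorbed into the truncation. By Jensen's inequality together with Assumption~\ref{BBnd}, $\int\|B(s,u_n(t))\|^{1+\varepsilon/2}\,\mu(ds)\leq C$ whenever $\|u_n(t)\|\leq R$, contributing at most $CT$ after integrating in $t$. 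The resolvent--projection term is handled identically by Jensen and Assumption~\ref{JBnd-dif}, yielding another contribution bounded by $CT$.

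The delicate term is the pointwise-in-$s$ distance $d(u_n(t),D(s))/\gamma_n$, which \emph{a priori} blows up as $\gamma_n\to 0$. Here I would use that, by definition of the essential intersection, $\mD\subset D(s)$ for $\mu$-a.e.~$s$, so $d(\cdot,D(s))\leq\bs d(\cdot)$ $\mu$-a.e., and hence
\begin{equation*}
\int_\Xi\frac{d(u_n(t),D(s))^{1+\varepsilon/2}}{\gamma_n^{1+\varepsilon/2}}\,\mu(ds)
 \leq\left(\frac{\bs d(u_n(t))}{\gamma_n}\right)^{1+\varepsilon/2}\,.
\end{equation*}
Integrating in $t\in[0,T]$ and invoking~\eqref{eq:int-d-bornee} from Lemma~\ref{lem:sko}, which transfers the uniform integrability of Lemma~\ref{lem:ui} into a pathwise bound at the selected $\omega$, furnishes a bound uniform in $n$. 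Summing the three contributions yields $\sup_n\|v_n\|_{\cL^{1+\varepsilon/2}_E}<\infty$, which is the claim. The main obstacle is precisely this distance term: without Assumption~\ref{linreg} propagating through Lemma~\ref{lem:ui} and~\eqref{eq:int-d-bornee}, the $1/\gamma_n$ singularity of $A_\gamma$ outside the domain would be uncontrolled; once this is in hand, the remaining estimates are routine applications of Jensen's inequality and of the moment hypotheses on the compact ball $\bar B(0,R)$.
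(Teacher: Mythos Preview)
Your argument is correct and complete; the pointwise bound you derive is sound, and the three integrated contributions are each controlled exactly as you say. However, your decomposition differs from the paper's. The paper recycles the estimate leading to Eq.~\eqref{eq:EspCondZn}: it uses the $\gamma^{-1}$-Lipschitz property of $A_\gamma(s,\cdot)$ to compare $A_\gamma(s,x)$ with $A_\gamma(s,\tilde x)$ for a measurably chosen $\tilde x\in\mD$ with $\|x-\tilde x\|\leq 2\bs d(x)$, and then invokes $\|A_\gamma(s,\tilde x)\|\leq\|A_0(s,\tilde x)\|$ together with Assumption~\ref{A0bnd}. You instead insert the projection $\Pi_{\cl(D(s))}(a)$ and control the resolvent--projection gap via Assumption~\ref{JBnd-dif}. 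Both routes end up reducing to the same $\bs d(u_n(t))/\gamma_n$ term handled by~\eqref{eq:int-d-bornee}; the difference is only in which moment hypothesis absorbs the bounded part. The paper's choice has the minor advantage of using only Assumption~\ref{A0bnd}, which is common to both the common-domain and different-domains cases, whereas your route is specific to the different-domains setting where Assumption~\ref{JBnd-dif} is in force---but since the section explicitly works under that assumption, your proof is equally valid.
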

The sequence of mappings $((s,t)\mapsto (v_n(s,t),\|v_n(s,t)\|))$ is bounded in $\cL^{1+\varepsilon/2}_{E\times\bR}$
and therefore admits a weak cluster point in that space. We denote by $(v,w)$ such a cluster point,
where $v:\Xi\times  [0,T]\to E$ and $w:\Xi\times  [0,T]\to \bR$.
The following lemma is a consequence of Lem.~\ref{lem:cv-HR}.
\begin{lemma}
  \label{lem:cvth}
For every $(s,t)$ $\mu\otimes\lambda_T$-a.e.,
$(\sz(t),v(s,t))\in \graph(H_R(s,\,.\,))$.
\end{lemma}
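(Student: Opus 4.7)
The approach is to combine Mazur's lemma---upgrading the weak convergence of $(v_n, \|v_n\|)$ to pointwise convergence of convex combinations---with the monotonicity and maximality of the operator $A(s, \cdot) + B(s, \cdot)$ in order to pin down the limit $v$ pointwise.

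The first step is to apply Mazur's lemma to the weakly convergent sequence $((v_n, \|v_n\|))_n$ in $\cL^{1+\varepsilon/2}_{E\times\bR}$, producing convex combinations $\tilde v_n = \sum_{k\geq n}\alpha_k^n v_k$ and $\tilde w_n = \sum_{k\geq n}\alpha_k^n \|v_k\|$ built from the \emph{same} weights $\alpha_k^n$ that converge strongly in the product space to $(v, w)$. Passing to a further subsequence, I may assume $(\tilde v_n(s,t), \tilde w_n(s,t)) \to (v(s,t), w(s,t))$ for $\mu\otimes\lambda_T$-a.e.~$(s,t)$. I would fix such an $(s,t)$, also requiring $s$ to lie in the full-measure set provided by Lemma~\ref{lem:cv-HR}.

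The case $\|\sz(t)\| \geq R$ is automatic since $H_R(s,\sz(t)) = E$, so I focus on $\|\sz(t)\| < R$. Setting $y_k(s,t) \eqdef J_{\gamma_k}(s, u_k(t) - \gamma_k B(s, u_k(t)))$ and using the inclusion $A_{\gamma_k}(s, \cdot) \in A(s, J_{\gamma_k}(s, \cdot))$, a direct computation shows
\[
\bigl(y_k(s,t),\ -v_k(s,t) + B(s, y_k(s,t)) - B(s, u_k(t))\bigr) \in \graph\bigl((A+B)(s, \cdot)\bigr).
\]
Since $\sz(t) \in \bmD$ by Lemma~\ref{lem:z-in-D}, one has $\sz(t) \in \cl(D(s))$ for $\mu$-a.e.~$s$, so the nonexpansiveness of the resolvent together with $u_k(t)\to\sz(t)$, $\gamma_k\to 0$, and $J_{\gamma_k}(s,\sz(t))\to\Pi_{\cl(D(s))}(\sz(t))=\sz(t)$, yields $y_k(s,t)\to\sz(t)$; moreover $B(s,y_k)-B(s,u_k)\to 0$ by continuity.

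The heart of the argument is then to test monotonicity against an arbitrary $(\xi, \tau) \in \graph((A+B)(s,\cdot))$, yielding
\[
\langle -v_k(s,t) + B(s, y_k) - B(s, u_k) - \tau,\ y_k(s,t) - \xi\rangle \geq 0,
\]
to split $y_k - \xi = (y_k - \sz(t)) + (\sz(t) - \xi)$, and to average against the Mazur weights $\alpha_k^n$. The $(\sz(t) - \xi)$ part converges cleanly to $\langle -v(s,t) - \tau, \sz(t) - \xi\rangle$ as $n \to \infty$. The main obstacle is the cross-term involving $(y_k - \sz(t))$: since $v_k$ is only weakly controlled, one cannot bound $\|v_k\|\cdot\|y_k - \sz(t)\|$ pointwise. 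This is precisely where the companion weak limit $w$ of $\|v_n\|$ intervenes: the cross-term is dominated by $\sup_{k \geq n}\|y_k(s,t) - \sz(t)\|\cdot(\tilde w_n(s,t) + \|\tau\| + o(1))$, which vanishes as $n\to\infty$. The resulting limiting inequality
\[
\langle -v(s,t) - \tau,\ \sz(t) - \xi\rangle \geq 0 \quad\text{for all }(\xi, \tau) \in \graph((A+B)(s,\cdot)),
\]
combined with the maximality of $A(s,\cdot) + B(s,\cdot)$ (Assumption~\ref{Amax} plus the continuity and full domain of $B(s,\cdot)$, by \cite[Cor.~2.7]{bre-livre73}), forces $-v(s,t) \in (A+B)(s, \sz(t))$, equivalently $v(s,t) \in H_R(s, \sz(t))$.
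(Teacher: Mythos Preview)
Your proof is correct and follows essentially the same strategy as the paper's: apply Mazur's lemma to the pair $(v_n,\|v_n\|)$, test monotonicity of $A(s,\cdot)+B(s,\cdot)$ against an arbitrary graph element, and use the companion limit $w$ of $\|v_n\|$ to kill the cross-terms; the only difference is that you construct the graph-approximating points $(y_k,-v_k+B(s,y_k)-B(s,u_k))$ explicitly, whereas the paper packages this into Lemma~\ref{lem:cv-HR} and works with abstract $\varepsilon$-close pairs $(a_k,b_k)$. One minor slip: your citation of Assumption~\ref{Amax} for the maximality of $A(s,\cdot)+B(s,\cdot)$ is off---that assumption concerns the selection integral $\mA$, not $A(s,\cdot)$---but the conclusion still holds since $A(s)\in\maxmon$ by the paper's standing hypotheses and $B(s,\cdot)$ is continuous with full domain.
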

By Lem.~\ref{lem:cvth} and Fubini's theorem, there is a $\lambda_T$-negligible set s.t.
for every $t$ outside this set, $v(\,.\,,t)$ is an integrable selection of $H_R(\,.\,,\sz(t))$.
Moreover, as $v$ is a weak cluster point of $v_n$ in $\cL^{1+\varepsilon/2}_{E}$, it holds that
$$
{\mathsf z}(t) = {\mathsf z}(0) 
 + \int_0^t\int_\Xi v(s,u)\, \mu(ds) \, du\,,
 \qquad (\forall t\in [0,T]) \,.
$$
Define $\sH_R(a)\eqdef \int H_R(\,.\,,a)d\mu$.  By the above equality,
$\sz$ is a solution to the DI $\dot{\sx}\in \sH_R(\sx)$ with initial
condition $\sz(0)=a^*$. Denoting by $\Phi_R(a^*)$ the set of such
solutions, this reads $\sz\in \Phi_R(a^*)$. As $a^*\in \cK\cap \bmD$,
one has $\sz\in \Phi_R(\cK\cap \bmD)$ where we use the notation
$\Phi_R(S) \eqdef \cup_{a\in S}\Phi_R(a)$ for every set $S\subset E$.
Extending the notation $\distC(x,\mathsf S) \eqdef \inf_{\sy\in
  \mathsf S}\distC(\sx,\sy)$, we obtain that
$\distC(\sx_n,\Phi_R(\cK\cap \bmD))\to 0$.
Thus, for every $\eta>0$, we have shown that 
$\bar \bP^{a_n,\gamma_n}(\distC(X_{\gamma_n},\Phi_R(\cK\cap \bmD))>\eta)\to 0$
as $n\to\infty$. We have thus proven the following result:
$$
\forall \eta>0,\ \lim_{\gamma\to 0}\sup_{a\in\cK\cap \mD_{\gamma M}} \bar\bP^{a,\gamma}(\distC(X_{\gamma},\Phi_R(\cK\cap \bmD))>\eta)=0\,.
$$
Letting $T>0$ and choosing $R>\sup\{\|\Phi(a,t)\|:t\in
[0,T], a\in \cK\cap \bmD\}$ (the latter quantity being finite, see
\emph{e.g.} \cite{bre-livre73}), it is easy to show that any solution
to the DI $\dot{\sx}\in \sH_R(\sx)$ with initial condition
$a\in\cK\cap \bmD$ coincides with $\Phi(a,\,.\,)$ on $[0,T]$. 
By the same arguments as in  \cite[Section 4 - End
of the proof]{bia-hac-sal-(arxiv)16}, Theorem~\ref{fb-apt} follows. 

\section{Cluster points of the $P_\gamma$ invariant measures. 
End of the proof of Th.~\ref{the:CV}}
\label{sec-clust} 

\begin{lemma} 
\label{fb-PH} 
Assume that there exists $x_\star \in Z(\mA + \mB)$ that admits a 
${\mathcal L}^{2}$ representation. Then,  
\[
P_\gamma(x,  \|\cdot-x_\star\|^2) \leq \| x - x_\star\|^2 - 
  0.5 \gamma \psi_\gamma(x) + \gamma^2 C , 
\]
where $\psi_\gamma$ is the function defined in~\eqref{psig}. 
\end{lemma}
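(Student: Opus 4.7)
The plan is to start from the identity
\[
P_\gamma(x,\|\cdot-x_\star\|^2)
=\int \|J_\gamma(s,y(s))-x_\star\|^2\,\mu(ds),
\qquad y(s)\eqdef x-\gamma B(s,x),
\]
and rewrite $J_\gamma(s,y(s))=x-\gamma B(s,x)-\gamma A_\gamma(s,y(s))$. Setting for brevity $B\eqdef B(s,x)$ and $A_\gamma\eqdef A_\gamma(s,y(s))$, I would expand the squared norm by using the splitting $x-x_\star=(J_\gamma(s,y(s))-x_\star)+\gamma(B+A_\gamma)$, which gives the \emph{identity}
\[
\|J_\gamma(s,y(s))-x_\star\|^2
=\|x-x_\star\|^2-2\gamma\langle B+A_\gamma,J_\gamma(s,y(s))-x_\star\rangle-\gamma^2\|B+A_\gamma\|^2.
\]
This is the cleanest expansion, because it produces the scalar product evaluated at $J_\gamma(s,y(s))$ rather than at $x$, which is exactly what monotonicity of $A(s,\cdot)$ needs.

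Next I would split
\[
B+A_\gamma=(A_\gamma-\varphi(s))+(B-B(s,x_\star))+(\varphi(s)+B(s,x_\star))
\]
and integrate term by term. The first piece yields $\int\langle A_\gamma-\varphi,J_\gamma-x_\star\rangle\,\mu(ds)\ge 0$ by monotonicity of $A(s,\cdot)$, because $\varphi(s)\in A(s,x_\star)$ and $A_\gamma(s,y(s))\in A(s,J_\gamma(s,y(s)))$. For the second piece, I rewrite $J_\gamma(s,y(s))-x_\star=(x-x_\star)-\gamma(B+A_\gamma)$ to produce the monotone term $\int\langle B-B(s,x_\star),x-x_\star\rangle\,\mu(ds)\ge 0$, plus an order-$\gamma$ correction. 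For the third piece, I again use $J_\gamma(s,y(s))-x_\star=(x-x_\star)-\gamma(B+A_\gamma)$: the leading term vanishes after integration since $\int(\varphi(s)+B(s,x_\star))\,\mu(ds)=0$ by the $\mathcal{L}^2$ representation, leaving another order-$\gamma$ correction.

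Collecting these computations, and simplifying the quadratic remainder $2\langle\beta+\delta,\alpha+\beta+\delta\rangle-\|\alpha+\beta+\delta\|^2=\|\beta+\delta\|^2-\|\alpha\|^2$ (with $\alpha=A_\gamma-\varphi$, $\beta=B-B(\cdot,x_\star)$, $\delta=\varphi+B(\cdot,x_\star)$), I would obtain the exact identity
\begin{align*}
P_\gamma(x,\|\cdot-x_\star\|^2)
&=\|x-x_\star\|^2
-2\gamma\!\int\!\langle A_\gamma-\varphi,J_\gamma-x_\star\rangle\,\mu(ds)
-2\gamma\!\int\!\langle B-B(s,x_\star),x-x_\star\rangle\,\mu(ds) \\
&\quad +\gamma^2\!\int\!\|B+\varphi\|^2\,\mu(ds)-\gamma^2\!\int\!\|A_\gamma\|^2\,\mu(ds).
\end{align*}
The final step applies $\|B+\varphi\|^2=\|\beta+\delta\|^2\le 2\|\beta\|^2+2\|\delta\|^2$. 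The $2\|\delta\|^2$ integrates to a constant (this is precisely the role of the $\mathcal{L}^2$ representation), contributing an $O(\gamma^2)$ term. Comparing to $-0.5\gamma\,\psi_\gamma(x)$, one sees that the $-6\gamma\int\|\beta\|^2\,\mu(ds)$ ballast in $\psi_\gamma$, weighted by $0.5$, contributes exactly $+3\gamma^2\int\|\beta\|^2$, which dominates the $+2\gamma^2\int\|\beta\|^2$ produced above; similarly the $+\gamma\int\|A_\gamma\|^2$ term (with the factor $0.5$) gives enough room to absorb $-\gamma^2\int\|A_\gamma\|^2$; and the monotone terms in $\psi_\gamma$ are absorbed by the corresponding $-2\gamma$ pieces with room to spare (coefficient $-1.5\gamma$ left over, which is $\le 0$).

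The main obstacle is \emph{neither} monotonicity \emph{nor} the $\mathcal{L}^2$ bound, but the \emph{bookkeeping of constants}: one must check that the numerical slack in $\psi_\gamma$ (the $6$ in front of $\|\beta\|^2$ and the $1$ in front of $\|A_\gamma\|^2$) is calibrated to absorb exactly the cross terms that arise from treating $J_\gamma(s,y(s))-x_\star$ rather than $x-x_\star$ in the Yosida inner product. Once the coefficients line up, the inequality reduces to a sum of non-positive quantities plus a finite $O(\gamma^2)$ term controlled by $\|\varphi+B(\cdot,x_\star)\|_{\mathcal{L}^2}^2$.
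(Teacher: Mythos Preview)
Your approach is correct and follows essentially the same route as the paper. The paper expands $\|x_{n+1}-x_\star\|^2$ around $x_n-x_\star$ and then reworks the inner product $\langle A_\gamma,x-x_\star\rangle$ to produce $\langle A_\gamma-\varphi,J_\gamma-x_\star\rangle$; you instead expand directly around $J_\gamma-x_\star$, which gets the monotone term out in one step. The algebraic content and the final bookkeeping with $\psi_\gamma$ are the same.

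One slip to fix: in your displayed ``exact identity'' the last term should be $-\gamma^2\!\int\|A_\gamma-\varphi\|^2\,\mu(ds)$, not $-\gamma^2\!\int\|A_\gamma\|^2\,\mu(ds)$, since your $\alpha$ equals $A_\gamma-\varphi$. This does not break the argument: using $\|A_\gamma-\varphi\|^2\ge \tfrac12\|A_\gamma\|^2-\|\varphi\|^2$, the correction is absorbed into the $C\gamma^2$ term together with the slack $-0.5\gamma^2\!\int\|A_\gamma\|^2$ coming from $-0.5\gamma\psi_\gamma$.
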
 
\begin{proof}
By assumption, there exists a ${\mathcal L}^{2}$ representation $(\varphi, B)$
of $x_\star$. By expanding 
\[
\|x_{n+1}-x_\star\|^2 = 
\|x_{n}-x_\star\|^2+2\ps{x_{n+1}-x_n,x_n-x_\star} + \|x_{n+1}-x_n\|^2\, , 
\]
and by using \eqref{fb-yosida}, we obtain
\begin{multline}
\label{x-x*} 
\|x_{n+1}-x_\star\|^2    
= \|x_{n}-x_\star\|^2 
 - 2\gamma\ps{A_{\gamma}(\xi_{n+1},x_n-\gamma B(\xi_{n+1},x_n)) 
  + B(\xi_{n+1},x_n),x_n-x_\star} \\
  + \gamma^2\| A_{\gamma}(\xi_{n+1},x_n-\gamma B(\xi_{n+1},x_n)) 
   + B(\xi_{n+1},x_n)\|^2 . 
\end{multline}
Write $x = x_n$, 
$A_\gamma = A_{\gamma}(\xi_{n+1},x_n-\gamma B(\xi_{n+1},x_n))$, 
$J_\gamma = J_{\gamma}(\xi_{n+1},x_n-\gamma B(\xi_{n+1},x_n))$,  
$B = B(\xi_{n+1},x_n)$, $B_\star = (\xi_{n+1},x_\star)$, and 
$\varphi = \varphi(\xi_{n+1})$ for conciseness. 
We write 
\begin{align*} 
\ps{A_\gamma, x - x_\star} &= \ps{A_\gamma - \varphi, J_\gamma - x_\star}
+ \ps{A_\gamma - \varphi, x - \gamma B - J_\gamma} + 
\gamma \ps{A_\gamma - \varphi,B} \\
&\phantom{=} + \ps{\varphi, x - x_\star}  \\
&= \ps{A_\gamma - \varphi, J_\gamma - x_\star} + 
\gamma \| A_\gamma\|^2 - \gamma \ps{A_\gamma,\varphi} 
+ \gamma \ps{A_\gamma-\varphi, B} + 
\ps{\varphi, x - x_\star} .  
\end{align*}
We also write 
$\ps{B, x - x_\star} = \ps{B - B_\star, x - x_\star} + 
\ps{B_\star, x - x_\star}$ and 
$\gamma^2\| A_{\gamma} + B \|^2 = \gamma^2 (\| A_\gamma\|^2 + \| B\|^2 + 
2\ps{A_\gamma,B} )$. Plugging these identities at the right hand side 
of~\eqref{x-x*}, we obtain 
\begin{align*}
\|x_{n+1}-x_\star\|^2 &= \|x - x_\star\|^2 
-2\gamma \left\{ \ps{A_\gamma-\varphi, J_\gamma - x_\star} 
  + \ps{B - B_\star, x - x_\star}  \right\} 
 -  \gamma^2 \| A_\gamma\|^2 \\
&\phantom{=} + 2\gamma^2 \ps{A_\gamma,\varphi} 
 +2 \gamma^2 \ps{\varphi, B} + \gamma^2 \| B\|^2 
-2\gamma \ps{\varphi+B_\star, x - x_\star} \\
&\leq \|x - x_\star\|^2 
-2\gamma \left\{ \ps{A_\gamma-\varphi, J_\gamma - x_\star} 
  + \ps{B - B_\star, x - x_\star}  \right\} 
 -  (\gamma^2/2) \| A_\gamma\|^2 \\
&\phantom{=} + (3\gamma^2/2) \| B\|^2 + 4 \gamma^2 \| \varphi \|^2 
-2\gamma \ps{\varphi+B_\star, x - x_\star} \\ 
&\leq \|x - x_\star\|^2 
-2\gamma \left\{ \ps{A_\gamma-\varphi, J_\gamma - x_\star} 
  + \ps{B - B_\star, x - x_\star}  \right\} 
 -  (\gamma^2/2) \| A_\gamma\|^2 \\
&\phantom{=} + 3\gamma^2 \| B - B_\star \|^2 + 3 \gamma^2 \|B_\star\|^2 
 + 4 \gamma^2 \| \varphi \|^2 
-2\gamma \ps{\varphi+B_\star, x - x_\star} 
\end{align*} 
where the first inequality is due to the fact that $2\ps{a,b} \leq \| a\|^2/2 +
2 \|b\|^2$ and the second to the triangle inequality. 
Observe that the term between the braces at the right hand side of the last 
inequality is nonnegative thanks to the monotonicity of $A(s,\cdot)$ and
$B(s,\cdot)$. Taking the conditional expectation $\bE_n$ at each side, the 
contribution of the last inner product at the right hand side disappears, and 
we obtain 
\[
P_\gamma(x, \|\cdot-x_\star\|^2) \leq \| x - x_\star\|^2 - 
  0.5 \gamma \psi_\gamma(x) + 4 \gamma^2 \int \| \varphi(s) \|^2 \mu(ds) 
+ 3 \gamma^2 \int \|B(s,x_\star) \|^2 \mu(ds) 
\]
where $\psi_\gamma$ is the function defined in~\eqref{psig}. 
\end{proof} 

Given $k\in\bN$, we denote by $P_\gamma^k$ the kernel $P_\gamma$ iterated
$k$ times. The iterated kernel is defined recursively as 
$P_\gamma^0(x,dy) = \delta_x(dy)$, and 
\[
P_\gamma^k(x, S) = \int P_\gamma^{k-1} (y, S) \, P_\gamma(x, dy) 
\]
for each $S \in \mcB(E)$. 

\begin{lemma} 
\label{inv-mon}
Let the assumptions of the statement of Th.~\ref{fb-apt} hold true. 
Assume that for all $\varepsilon > 0$, there exists $M > 0$ such that 
\begin{equation}
\label{supp-I(P)} 
\sup_{\gamma \in (0,\gamma_0]} \sup_{\pi\in \cI(P_{\gamma})} \pi((\mD_{M\gamma})^c) \leq \varepsilon .
\end{equation} 
Then, as $\gamma\to 0$, any cluster point of $\cI(\cP)$ is an element of 
$\cI(\Phi)$.
\end{lemma}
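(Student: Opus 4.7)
The plan is to pick a narrow cluster point $\pi$ of $\cI(\cP)$: there exist $\gamma_n\to 0$ and $\pi_n\in\cI(P_{\gamma_n})$ with $\pi_n\to\pi$ narrowly. The goal is to show that for every $t>0$ and every $f\in C_b(E)$, $\int f\,d\pi = \int f(\Phi(x,t))\,\pi(dx)$, which is the invariance defining $\cI(\Phi)$. For each $\varepsilon>0$, tightness of the convergent sequence $\{\pi_n\}$ produces a compact $\cK\subset E$ with $\sup_n \pi_n(\cK^c)\leq \varepsilon$, and hypothesis~(\ref{supp-I(P)}) supplies $M=M(\varepsilon)>0$ with $\pi_n((\mD_{M\gamma_n})^c)\leq \varepsilon$ for every $n$. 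A first consequence is that $\support(\pi)\subset \cl(\mD)$: for $x\notin \cl(\mD)$, choosing $r>0$ such that $B(x,r)\cap \cl(\mD)=\emptyset$ forces $B(x,r/2)\subset (\mD_{r/2})^c\subset (\mD_{M\gamma_n})^c$ for large $n$, hence $\pi_n(B(x,r/2))\leq \varepsilon$; since $B(x,r/2)$ is open, the Portmanteau inequality gives $\pi(B(x,r/2))\leq \liminf_n \pi_n(B(x,r/2))\leq \varepsilon$, and letting $\varepsilon\to 0$ yields $\pi(B(x,r/2))=0$.

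Fix $f\in C_b(E)$ and $t>0$, and let $k_n\eqdef \lfloor t/\gamma_n\rfloor$, so that $k_n\gamma_n\to t$. By $P_{\gamma_n}$-invariance iterated $k_n$ times,
\[
\int f\,d\pi_n \;=\; \int \bE^{x,\gamma_n}\bigl[f(\sX_{\gamma_n}(k_n\gamma_n))\bigr]\,\pi_n(dx) .
\]
Theorem~\ref{fb-apt}, applied with the compact $\cK$ and the constant $M$ just chosen, asserts that the process $\sX_{\gamma_n}$ under $\bP^{a,\gamma_n}$ converges in probability, in $(C(\bR_+,E),\distC)$, to $\Phi(\Pi_{\cl(\mD)}(a),\cdot)$, uniformly over $a\in \cK\cap \mD_{M\gamma_n}$. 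Since convergence in $\distC$ implies uniform convergence on $[0,t+1]$, and since $|k_n\gamma_n-t|\leq \gamma_n$ together with the joint continuity of $\Phi$ on compact subsets of $\cl(\mD)\times\bR_+$ absorbs the discrete-time shift, while $f$ is uniformly continuous on bounded sets and globally bounded, this implies
\[
\sup_{a\in \cK\cap \mD_{M\gamma_n}} \Bigl| \bE^{a,\gamma_n}\bigl[f(\sX_{\gamma_n}(k_n\gamma_n))\bigr] - f\bigl(\Phi(\Pi_{\cl(\mD)}(a),t)\bigr) \Bigr| \xrightarrow[n\to\infty]{} 0 .
\]

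Splitting $\int \bE^{x,\gamma_n}[f(\sX_{\gamma_n}(k_n\gamma_n))]\,\pi_n(dx)$ along $\cK\cap \mD_{M\gamma_n}$ and its complement (the latter contributing at most $2\|f\|_\infty \varepsilon$), the previous display yields
\[
\int f\,d\pi_n = \int_{\cK\cap \mD_{M\gamma_n}} f\bigl(\Phi(\Pi_{\cl(\mD)}(x),t)\bigr)\,\pi_n(dx) + O(\varepsilon) + o(1) .
\]
The function $x\mapsto f(\Phi(\Pi_{\cl(\mD)}(x),t))$ lies in $C_b(E)$, because $\Pi_{\cl(\mD)}$ is $1$-Lipschitz and $\Phi(\cdot,t)$ is non-expansive on $\cl(\mD)$; narrow convergence $\pi_n\to\pi$ therefore sends the right-hand side to $\int f(\Phi(\Pi_{\cl(\mD)}(x),t))\,\pi(dx)$, which equals $\int f(\Phi(x,t))\,\pi(dx)$ by the first paragraph. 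Combined with $\int f\,d\pi_n\to \int f\,d\pi$ and letting $\varepsilon\to 0$, this yields the desired identity. The principal technical obstacle is coordinating three independent sources of slack---tightness of $\{\pi_n\}$, concentration of $\pi_n$ on a $\gamma_n$-thickening of $\mD$, and uniform narrow convergence of $\sX_{\gamma_n}$ to the semiflow---so that the cumulative error is arbitrarily small; hypothesis~(\ref{supp-I(P)}) is precisely what enables Theorem~\ref{fb-apt} to be applied on a set carrying nearly all the $\pi_n$-mass.
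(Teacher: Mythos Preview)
Your proof is correct and follows essentially the same route as the paper's: fix a bounded test function, use $P_{\gamma_n}$-invariance at $k_n=\lfloor t/\gamma_n\rfloor$ steps, split the integral over $\cK\cap\mD_{M\gamma_n}$ and its complement, invoke Theorem~\ref{fb-apt} on the good piece, and pass to the limit via narrow convergence of $x\mapsto f(\Phi(\Pi_{\cl(\mD)}(x),t))$. The only cosmetic differences are that the paper restricts to bounded Lipschitz $f$ (which makes the bound $|f(x_{k_n})-f(\Phi(\cdot))|\leq 2\|f\|_\infty\wedge L\|x_{k_n}-\Phi(\cdot)\|$ immediate, whereas you compress the corresponding step for general $f\in C_b$ into ``uniformly continuous on bounded sets and globally bounded''), separates the discrete-time shift into an explicit term $V_i$, and establishes $\support(\pi)\subset\cl(\mD)$ at the end rather than at the beginning.
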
 
Note that in the common domain case, \eqref{supp-I(P)} is trivially satisfied,
since the supports of all the invariant measures are included in $\bmD$. 

\begin{proof}
Choose two sequences $(\gamma_i)$ and $(\pi_i)$ such that $\gamma_i\to 0$, 
$\pi_i\in \cI(P_{\gamma_i})$ for all $i\in\bN$, and $\pi_i$ converges 
narrowly to some $\pi \in\cM( E)$ as $i\to\infty$. 

Let $f$ be a real, bounded, and Lipschitz function on $E$ with Lipschitz
coefficient $L$. By definition, $\pi_i(f)=\pi_i(P^k_{\gamma_i}f)$
for all $k\in\bN$. Set $t>0$, and let $k_i=\lfloor t/\gamma_i \rfloor$. 
We have 
\begin{align*}
|\pi_i f - \pi_i (f\circ\Phi(\Pi_{\bmD}(\cdot), t))| 
&= \left| \int (P_{\gamma_i}^{k_i}(a,f)-f(\Phi(\Pi_{\bmD}(a),t))) 
                                            \pi_i(da)\right|  \\ 
&\leq \int \left| P_{\gamma_i}^{k_i}(a,f)
   -f(\Phi(\Pi_{\bmD}(a),{k_i}\gamma_i)) \right| \pi_i(da)  \\
&\phantom{=} +  \int \left| f(\Phi(\Pi_{\bmD}(a),{k_i}\gamma_i)) 
 - f(\Phi(\Pi_{\bmD}(a),t))\right| \pi_i(da)   \\ 
&\leq  \int \bE^{a,\gamma_i} \left|f(x_{k_i})
      -f(\Phi(\Pi_{\bmD}(a),{k_i}\gamma_i))\right| \pi_i(da) \\
&\phantom{=} 
+ \int \left| f(\Phi(\Pi_{\bmD}(a),{k_i}\gamma_i)) 
            - f(\Phi(\Pi_{\bmD}(a),t))\right|  \pi_i(da)   \\ 
&\eqdef U_i + V_i \, . 
\end{align*}
By the boundedness and the Lispchitz-continuity of $f$, 
\[
U_i \leq \int  \bE^{a,\gamma_i} \left[2\|f\|_\infty \wedge 
  L \| x_{k_i} - \Phi(\Pi_{\bmD}(a),k_i\gamma_i) \| \right] \pi_i(da)\,.
\]
Fixing an arbitrarily small $\varepsilon > 0$, it holds by \eqref{supp-I(P)} 
that 
$\pi_i((\mD_{M\gamma_i})^c) \leq \varepsilon / 2$ for a large enough $M$. 
By the tightness of $(\pi_i)$, we can choose a compact $\cK\subset E$ s.t.~for 
all $i$, $\pi_i(\cK^c) \leq \varepsilon/2$. With these choices, we obtain 
\[
U_i \leq 
\sup_{a\in\cK\cap \mD_{M\gamma_i}} 
 \bE^{a,\gamma_i} \left[2\|f\|_\infty \wedge 
  L \| x_{k_i} - \Phi(\Pi_{\bmD}(a),k_i\gamma_i) \| \right] 
 + 2 \|f\|_\infty \,\varepsilon\,.
\]
Denoting as $(\cdot)_{[0,t]}$ the restriction of a function to the interval 
$[0,t]$, and observing that 
$\| x_{k_i} - \Phi(\Pi_{\bmD}(a),k_i\gamma_i) \| \leq 
  \| (\sX_\gamma(x) - \Phi(\Pi_{\bmD}(a),\cdot))_{[0,t]} \|_\infty$, we can 
now  apply Th.~\ref{fb-apt} to obtain 
\[
\sup_{a\in\cK\cap \mD_{M\gamma_i}} 
 \bE^{a,\gamma_i} \left[2\|f\|_\infty \wedge 
  L \| x_{k_i} - \Phi(\Pi_{\bmD}(a),k_i\gamma_i) \| \right] 
\xrightarrow[i \to\infty]{} 0\, . 
\]
As $\varepsilon$ is arbitrary, we obtain that $U_i \to_i 0$. Turning to $V_i$, 
fix an arbitrary $\varepsilon > 0$, and choose a compact $\cK \subset E$ such
that $\pi_i(\cK^c) \leq \varepsilon$ for all $i$. We have 
\[
V_i \leq \sup_{a\in\cK} \left| f(\Phi(\Pi_{\bmD}(a),{k_i}\gamma_i)) 
            - f(\Phi(\Pi_{\bmD}(a),t))\right|  + 
2 \| f\|_\infty \varepsilon \, .
\]
By the uniform continuity of the function 
$f \circ \Phi(\Pi_{\bmD}(\cdot), \cdot)$ on the compact $\cK \times [0,t]$, and
by the convergence $k_i\gamma_i \uparrow t$, we obtain that 
$\limsup_i V_i \leq 2 \| f\|_\infty \varepsilon$. As $\varepsilon$ is 
arbitrary, $V_i \to_i 0$.  
In conclusion, $\pi_i f - \pi_i (f\circ\Phi(\Pi_{\bmD}(\cdot), t)) \to_i 0$. 
Moreover, 
$\pi_i f - \pi_i (f\circ\Phi(\Pi_{\bmD}(\cdot), t)) \to_i 
\pi f - \pi (f\circ\Phi(\Pi_{\bmD}(\cdot), t))$ since 
$f(\cdot) - f\circ\Phi(\Pi_{\bmD}(\cdot), t))$ is bounded continuous. Thus, 
$\pi f = \pi (f\circ\Phi(\Pi_{\bmD}(\cdot), t))$. 
Since $\pi_i$ converges narrowly to $\pi$, we obtain that for all $\eta > 0$, 
$\pi(\cl(\mD_\eta)^c) \leq\liminf_i \pi_i(\cl(\mD_\eta)^c) = 0$ by choosing
$\varepsilon$ arbitrarily small in \eqref{supp-I(P)} and making 
$\gamma_i\to 0$. Thus, $\support(\pi) \subset \bmD$, and we obtain in conclusion
that $\pi f = \pi(f\circ\Phi(\cdot, t))$ for an arbitrary real, bounded, 
and Lipschitz continuous function $f$. Thus, $\pi\in\cI(\Phi)$. 
\end{proof}

To establish \eqref{supp-I(P)} in the different domains case, we need the
following lemma. 
\begin{lemma}
\label{thickening} 
Let Assumptions \ref{linreg}, \ref{JBgrow}, and 
\ref{psi-increase}--\ref{psi-sous-lin} hold true. Then, for all 
$\varepsilon > 0$, there exists $M > 0$ such that 
\[
\sup_{\gamma \in (0,\gamma_0]} \sup_{\pi\in \cI(P_{\gamma})} \pi((\mD_{M\gamma})^c) \leq \varepsilon .
\]
\end{lemma}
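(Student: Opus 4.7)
The goal is to establish $\pi(\bs d(\cdot)^2) = O(\gamma^2)$ uniformly in $\pi\in\cI(P_\gamma)$ and $\gamma\in(0,\gamma_0]$; from there, Markov's inequality $\pi((\mD_{M\gamma})^c) = \pi(\bs d>M\gamma) \leq \pi(\bs d^2)/(M\gamma)^2$ absorbs any prescribed $\varepsilon$ by enlarging $M$.

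The first ingredient is a uniform bound on $\pi(\psi_\gamma)$ via the drift inequality of Lemma~\ref{fb-PH}: with $V(x)=\|x-x_\star\|^2$, one has $P_\gamma V \leq V - 0.5\gamma\psi_\gamma + \gamma^2 C$. Under Assumption~\ref{psi-increase}--\ref{psi-sous-lin}, Proposition~\ref{prop:PH} applies (after rewriting the drift with a non-negative $Q$ built from the available lower bound on $\psi_\gamma$), so that $\cI(\cP)$ is tight; integrating the drift against $\pi$, with a standard truncation argument to handle a possibly infinite $\pi(V)$, yields $\pi(\psi_\gamma)\leq 2\gamma C$.

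The second ingredient is a pointwise lower bound of $\psi_\gamma$ in terms of $\bs d(x)^2/\gamma$ up to controllable remainders. Writing $y^s := x - \gamma B(s,x)$ and discarding the two non-negative monotonicity integrals in \eqref{psig} gives $\psi_\gamma(x) \geq \gamma\int\|A_\gamma(s,y^s)\|^2\mu(ds) - 6\gamma\int\|B(s,x) - B(s,x_\star)\|^2\mu(ds)$. Since $J_\gamma(s,y)\in D(s)$, one has $\|A_\gamma(s,y)\| \geq d(y,D(s))/\gamma$; the triangle inequality $d(y^s,D(s)) \geq d(x,D(s)) - \gamma\|B(s,x)\|$, the elementary $(a-b)^2 \geq a^2/2 - b^2$ valid for all real $a,b$, and Assumption~\ref{linreg} then combine to give
\[
\gamma\int\|A_\gamma(s,y^s)\|^2\mu(ds) \;\geq\; \frac{C}{2\gamma}\,\bs d(x)^2 \;-\; \gamma\int\|B(s,x)\|^2\mu(ds),
\]
and rearranging produces the key pointwise inequality
\[
\bs d(x)^2 \;\leq\; \frac{2\gamma}{C}\Bigl(\psi_\gamma(x) + \gamma\int\|B(s,x)\|^2\mu(ds) + 6\gamma\int\|B(s,x)-B(s,x_\star)\|^2\mu(ds)\Bigr).
\]

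Integrating this against $\pi$ and invoking the first ingredient makes the $\psi_\gamma$ contribution $O(\gamma^2)$, so $\pi(\bs d^2)\leq K\gamma^2$ follows once $\pi(\int\|B(s,\cdot)\|^2\mu(ds))$ and $\pi(\int\|B(s,\cdot)-B(s,x_\star)\|^2\mu(ds))$ are bounded uniformly in $\pi$ and $\gamma$. Securing this uniform $L^2$-control is the main technical hurdle, since \ref{BBnd} only supplies $L^{1+\varepsilon}$ moments on compacts and \ref{JBgrow} only a first-moment bound on $\|B\|$; the uniform $L^2$-bound must be pieced together from the tightness of $\cI(\cP)$, the growth control $\|x\| \leq C'(1 + \Psi(x))$ that \ref{psi-increase}--\ref{psi-sous-lin} delivers at infinity, and a localization argument using \ref{BBnd} on a large compact set and \ref{JBgrow} on its complement. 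With this bound in hand, Markov's inequality closes the proof.
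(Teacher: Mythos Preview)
Your pointwise inequality $\bs d(x)^2 \leq \frac{2\gamma}{C}\bigl(\psi_\gamma(x) + \gamma\int\|B(s,x)\|^2\mu(ds) + 6\gamma\int\|B(s,x)-B(s,x_\star)\|^2\mu(ds)\bigr)$ is correct, and the first ingredient $\pi(\psi_\gamma)\leq 2\gamma C$ is fine modulo the truncation you mention. The gap is exactly where you flag it: you need $\sup_{\gamma,\pi}\pi\bigl(\int\|B(s,\cdot)\|^2\mu(ds)\bigr)<\infty$, and the assumptions do not deliver this. Assumption~\ref{BBnd} gives only $L^{1+\varepsilon}$ control on compacts for \emph{some} $\varepsilon>0$, possibly $\varepsilon<1$, and Assumption~\ref{JBgrow} gives only $L^1$ control. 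Neither tightness of $\cI(\cP)$ nor the linear growth bound $\|x\|\leq C'(1+\Psi(x))$ upgrades a pointwise $L^{1+\varepsilon}$ bound on $s\mapsto B(s,x)$ to an $L^2$ bound, so the localization you propose cannot close the argument.

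The paper avoids this entirely by working with $\bs d$ rather than $\bs d^2$ and proving a one-step contraction in conditional expectation. Writing $\bs d(x_{n+1})\leq \|x_{n+1}-\Pi_{\cl(D(\xi_{n+1}))}(x_n)\| + \|\Pi_{\cl(D(\xi_{n+1}))}(x_n)-\Pi_{\cl(\mD)}(x_n)\|$, the first term is bounded in conditional mean by $C\gamma(1+\Psi(x_n))$ using the non-expansiveness of the resolvent together with Assumption~\ref{JBgrow} (only first moments of $B$ and of $\gamma^{-1}\|J_\gamma-\Pi_{\cl(D(s))}\|$ are needed), and the second term has conditional mean at most $\rho\,\bs d(x_n)$ for some $\rho\in[0,1)$ via firm non-expansiveness of projections and Assumption~\ref{linreg}. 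Iterating $\bE_n[\bs d(x_{n+1})]\leq \rho\,\bs d(x_n)+C\gamma(1+\Psi(x_n))$, integrating against $\pi_\gamma$, and using $\sup_\gamma\pi_\gamma(\Psi)<\infty$ from Proposition~\ref{prop:PH} yields $\pi_\gamma(\bs d)=O(\gamma)$, whence Markov's inequality with first power finishes. The key point is that the contraction mechanism requires only first-moment inputs, which is exactly what Assumption~\ref{JBgrow} provides.
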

\begin{proof}
We start by writing 
\begin{align*} 
\bs d(x_{n+1}) 
  \leq \| x_{n+1} - \Pi_{\cl(\mD)}(x_{n}) \|   
\leq \| x_{n+1} - \Pi_{\cl(D(\xi_{n+1}))}(x_n) \| + 
  \|\Pi_{\cl(D(\xi_{n+1}))}(x_n) - \Pi_{\cl(\mD)}(x_n) \| . 
\end{align*} 
On the one hand, we have by Assumption~\ref{JBgrow} and the nonexpansiveness
of the resolvent that 
\begin{align*} 
\bar \bE_n^{a,\gamma} \| x_{n+1} - \Pi_{\cl(D(\xi_{n+1}))}(x_n) \| &\leq 
 \bar \bE_n^{a,\gamma} \| J_\gamma(\xi_{n+1}, x_n) - \Pi_{\cl(D(\xi_{n+1}))}(x_n) \| 
 + \gamma \bar \bE_n^{a,\gamma} \| B(\xi_{n+1}, x_n) \| \\ 
&\leq C \gamma ( 1 + \Psi(x_n) ) \, , 
\end{align*} 
on the other hand, since 
\[
\| \Pi_{\cl(D(\xi_{n+1}))}(x_n) - \Pi_{\cl(\mD)}(x_n) \|^2 \leq 
\bs d(x_n)^2 - d(x_n, D(\xi_{n+1}))^2  
\quad \text{(see \eqref{pi-fne})}, 
\]
we can make use of Assumption \ref{linreg} to obtain 
\[
\bar \bE_n^{a,\gamma} \| \Pi_{\cl(D(\xi_{n+1}))}(x_n) - \Pi_{\cl(\mD)}(x_n) \| 
 \leq 
(\bar \bE_n^{a,\gamma} \| \Pi_{\cl(D(\xi_{n+1}))}(x_n) 
  - \Pi_{\cl(\mD)}(x_n) \|^2)^{1/2} 
\leq 
\rho \bs d(x_n) \, , 
\]
where $\rho \in [0, 1)$. We therefore obtain that 
$\bar \bE_n^{a,\gamma} \bs d(x_{n+1}) \leq \rho \bs d(x_n) + C \gamma (1 + \Psi(x_n))$. 
By iterating, we end up with the inequality 
\begin{equation}
\label{Pineq} 
P_\gamma^{n+1}(a, \bs d) \leq \rho^{n+1} \bs d(a) + C \gamma 
\sum_{k=0}^n \rho^{n-k} (1 + P_\gamma^k(a, \Psi) ) . 
\end{equation} 
By Lem.~\ref{fb-PH}, $\psi_\gamma(x) \leq 2 \gamma^{-1} \| x - x_\star \|^2 
+ \gamma C$, thus $P_\gamma(a, \psi_\gamma) \leq 
2 \gamma^{-1} P_\gamma( a, \|\cdot - x_\star \|^2) + \gamma C 
\leq 2 \gamma^{-1} \| a - x_\star \|^2 + C < \infty$. We obtain 
similarly that $P_\gamma^k(a, \psi_\gamma) < \infty$, thus 
$P_\gamma^k(a, \Psi) < \infty$ for all $k\in \bN$. 

Since $(\mD_{M\gamma})^c = \{ x \, : \, \bs d(x) \geq M \gamma \}$, it holds by 
Markov's inequality that 
\[
P_\gamma^k(a, (\mD_{M\gamma})^c) \leq \frac{P_\gamma^k(a, \bs d)}{M\gamma}  
\]
for all $k\in \bN$. Let $\pi_\gamma$ be a $P_\gamma$-invariant probability 
measure. From Assumption~\ref{psi-increase}--\ref{psi-sous-lin} and 
Lem.~\ref{fb-PH}, the inequality \eqref{eq:PH} in the statement
of Prop.~\ref{prop:PH} is satisfied with 
$V(x) = \| x - x_\star\|^2$,  $Q(x) = \Psi(x)$, $\alpha(\gamma) = \gamma/2$, 
and $\beta(\gamma) = C \gamma^2$. By the first part of this proposition, 
$\sup_\gamma \pi_\gamma \Psi < \infty$. In particular, noting that
$\bs d(x) \leq \| x\| + \| \Pi_{\cl(\mD)}(0) \|$, we obtain that 
$\sup_\gamma \pi_\gamma \bs d < \infty$. Getting back to~\eqref{Pineq}, we 
have for all $n\in\bN$, 
\begin{align*}
\pi_\gamma((\mD_{M\gamma})^c) &= 
P_\gamma^{n+1}(\pi_\gamma, (\mD_{M\gamma})^c) \\
&\leq  \frac{P_\gamma^{n+1}(\pi_\gamma, \bs d )}{M\gamma} \\
&\leq  \rho^{n+1} \frac{\pi_\gamma \bs d}{M\gamma} 
+ \frac{C}{M} 
\sum_{k=0}^n \rho^{n-k} (1 + P_\gamma^k(\pi_\gamma, \Psi) ) \\
&= 
\rho^{n+1} \frac{\pi_\gamma \bs d}{M\gamma} + \frac{C}{M} 
\sum_{k=0}^n \rho^{n-k} (1 + \pi_\gamma \Psi ) \\
&\leq \rho^{n+1} \frac{C}{M\gamma} + \frac{C}{M} \, . 
\end{align*} 
By making $n\to\infty$, we obtain that 
$\pi_\gamma((\mD_{M\gamma})^c) \leq C / M$, and the proof is concluded by 
taking $M$ as large as required.  
\end{proof} 

\subsection*{Th.~\ref{the:CV}: proofs of the convergences \eqref{cvg:support}, 
\eqref{P:CVS}, and \eqref{E:CVS}} 

We need to check that the assumptions of Prop.~\ref{prop:PH} are satisfied. 
Lem.~\ref{fb-PH} shows that the inequality \eqref{eq:PH} is satisfied with 
$V(x) = \| x - x_\star\|^2$,  $Q(x) = \Psi(x)$, $\alpha(\gamma) = \gamma/2$, 
and $\beta(\gamma) = C \gamma^2$, and  
Assumption~\ref{psi-increase}--\ref{psi-sous-lin} ensures that 
$\Psi(x) \xrightarrow[\|x\|\to\infty]{} \infty$ as required. 

When the assumptions of Th.~\ref{fb-apt}, are satisfied, Lem.~\ref{inv-mon}
shows with the help of Lem.~\ref{thickening} when needed that any cluster point
of $\cI(\cP)$ belongs to $\cI(\Phi)$. The required convergences follow at
once from Prop.~\ref{prop:PH}. Theorem~\ref{the:CV} is proven.




\appendix 


\section{Proofs relative to Section~\ref{sec-tightness}}
\label{anx:case-studies}

\subsection{Proof of Prop. \ref{tight-stogra}}

It is well known that the coercivity or the supercoercivity of a function 
$q \in \Gamma_0$ can be characterized through the study of the recession
function $q^\infty$ of $q$, which is the function in $\Gamma_0$ whose epigraph
is the recession cone of the epigraph of $q$ \cite[\S 8]{Roc70},
\cite[\S~6.8]{lau-(livre)72}.  
We recall the following fact.
\begin{lemma}
\label{coer-rec} 
The function $q \in \Gamma_0$ is coercive if and only if $0$ is the only 
solution of the inequality $q^\infty(x) \leq 0$. It is supercoercive if and 
only if $q^\infty = \iota_{\{0\}}$. 
\end{lemma}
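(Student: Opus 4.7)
The plan is to reduce both equivalences to standard recession-cone facts in $\Gamma_0$. Recall that for any $q\in\Gamma_0$ the recession function $q^\infty$ is itself in $\Gamma_0$, and two formulas will be crucial. First, choosing any $x_0\in\dom q$, one has for every $x\in E$
\[
q^\infty(x)=\sup_{t>0}\frac{q(x_0+tx)-q(x_0)}{t}
=\lim_{t\to\infty}\frac{q(x_0+tx)-q(x_0)}{t}.
\]
Second, $\epi(q^\infty)$ is the recession cone of $\epi q$; consequently, for any $a\in\bR$ such that $\lev_{\leq a}q\neq\emptyset$,
\[
\rec(\lev_{\leq a}q)=\{x\in E\,:\,q^\infty(x)\leq 0\}.
\]
I will take these identities as known and organize the proof around them.

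For the first equivalence, I will use the criterion (ii) cited in the paper: $q$ is coercive iff some nonempty level set $\lev_{\leq a}q$ is compact. Since $q$ is lsc its level sets are closed, so compactness reduces to boundedness, which in turn, for a closed convex set, is equivalent to its recession cone being $\{0\}$. Combining with the displayed formula for $\rec(\lev_{\leq a}q)$, coercivity is equivalent to $\{x:q^\infty(x)\leq 0\}=\{0\}$, which is the statement.

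For the second equivalence, I would argue both directions directly from the supremum formula for $q^\infty$. For the forward implication, fix $x_0\in\dom q$ and $x\neq 0$; since $q(x_0+tx)/\|x_0+tx\|\to\infty$ by supercoercivity and $\|x_0+tx\|/t\to\|x\|>0$ as $t\to\infty$, the ratio $(q(x_0+tx)-q(x_0))/t$ tends to $+\infty$, so $q^\infty(x)=+\infty$; and $q^\infty(0)=0$ holds in general. Hence $q^\infty=\iota_{\{0\}}$. For the converse, assume $q^\infty=\iota_{\{0\}}$ and suppose, aiming at a contradiction, that $q$ is not supercoercive. Then there exists $M$ and a sequence $(x_n)$ with $\|x_n\|\to\infty$ and $q(x_n)\leq M\|x_n\|$. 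Set $t_n\eqdef\|x_n\|$ and $y_n\eqdef x_n/\|x_n\|$; extracting a subsequence, $y_n\to y$ with $\|y\|=1$. The main step is to pass to the limit in
\[
\frac{q(t_n y_n)-q(x_0)}{t_n}\leq M-\frac{q(x_0)}{t_n}.
\]
Using convexity of $q$, for any fixed $t>0$ and large $n$ one has $x_0+ty_n=(1-t/t_n)x_0+(t/t_n)(x_0+t_n y_n)$ (for $n$ large enough that $t/t_n\leq 1$), whence
\[
q(x_0+ty_n)\leq\bigl(1-t/t_n\bigr)q(x_0)+(t/t_n)q(t_n y_n)
\leq q(x_0)+t\bigl(M-q(x_0)/t_n\bigr).
\]
Letting $n\to\infty$, the lower semicontinuity of $q$ gives $q(x_0+ty)-q(x_0)\leq tM$ for every $t>0$, hence $q^\infty(y)\leq M<\infty$, contradicting $q^\infty(y)=+\infty$ for $y\neq 0$.

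The only slightly delicate point is the last convexity-plus-lsc passage, which I have sketched above; everything else is a direct application of the two preliminary identities. I expect no serious obstacle beyond taking care that $x_0\in\dom q$ is fixed from the start (which is legitimate since $q$ is proper) and that the extracted subsequence $y_n\to y$ remains on the unit sphere, so that $y\neq 0$.
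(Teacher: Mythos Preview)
Your treatment of the first equivalence is essentially identical to the paper's: both reduce coercivity to compactness of a nonempty sublevel set, then invoke the identity $\rec(\lev_{\leq a}q)=\{x:q^\infty(x)\leq 0\}$ and the fact that a closed convex set is bounded iff its recession cone is $\{0\}$.

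For the second equivalence the paper simply cites \cite[Prop.~2.16]{bauschke1997legendre}, whereas you give a self-contained argument via the formula $q^\infty(x)=\sup_{t>0}(q(x_0+tx)-q(x_0))/t$. Your approach is more elementary and has the merit of not relying on an external reference; the price is a few lines of convexity/lsc manipulation.

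There is, however, a slip in your converse argument. The convex-combination identity you write is $x_0+ty_n=(1-t/t_n)x_0+(t/t_n)(x_0+t_n y_n)$, which is algebraically correct, but the convexity bound should then involve $q(x_0+t_n y_n)$, not $q(t_n y_n)$; and your hypothesis $q(x_n)\leq M\|x_n\|$ controls $q(t_n y_n)=q(x_n)$, not $q(x_0+x_n)$. The repair is immediate: work with $z_n\eqdef (x_n-x_0)/t_n$ instead of $y_n$. Then $x_0+tz_n=(1-t/t_n)x_0+(t/t_n)x_n$ is a genuine convex combination of $x_0$ and $x_n$, convexity gives
\[
q(x_0+tz_n)\leq (1-t/t_n)q(x_0)+(t/t_n)q(x_n)\leq q(x_0)+t\bigl(M-q(x_0)/t_n\bigr),
\]
and since $z_n\to y$ (same limit as $y_n$, because $x_0/t_n\to 0$), lower semicontinuity yields $q(x_0+ty)\leq q(x_0)+tM$ for all $t>0$, hence $q^\infty(y)\leq M$, the desired contradiction.
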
 
\begin{proof} 
By \cite[Prop.~6.8.4]{lau-(livre)72}, 
$\lev_{\leq 0} q^\infty$ is the recession cone of any level set $\lev_{\leq a} q$
which is not empty \cite[Th.~8.6]{Roc70}.
Thus, $q$ is coercive if and only if $\lev_{\leq 0} q^\infty$ is the recession cone
of a nonempty compact set, hence equal to $\{0\}$.
The second point follows from \cite[Prop. 2.16]{bauschke1997legendre}.
\end{proof} 

\begin{lemma}
\label{rec-moreau} 
For each $\gamma > 0$, $q^\infty = (q_\gamma)^\infty$. 
\end{lemma}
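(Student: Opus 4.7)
The plan is to identify $q^\infty$ with the support function of $\dom q^*$ — a standard conjugate-duality formula for functions in $\Gamma_0$ — and then to observe that $q$ and $q_\gamma$ share the same conjugate effective domain, so their recession functions coincide. This is cleaner than arguing via the definition $q^\infty(d) = \lim_{t\to\infty} q(x_0+td)/t$, which would require tracking an annoying error term coming from the gap between $q(x)$ and $q(\prox_{\gamma q}(x))$.

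First, I would recall the duality identity $f^\infty = \sigma_{\dom f^*}$ valid for any $f\in\Gamma_0$ (see \emph{e.g.} \cite[\S 6.8]{lau-(livre)72} or \cite[Cor.~11.5]{Roc70} and its analogues), which reduces the lemma to the set equality $\dom((q_\gamma)^*) = \dom(q^*)$. Second, I would invoke the representation of the Moreau envelope as the infimal convolution $q_\gamma = q \,\square\, (\|\cdot\|^2/(2\gamma))$. Since the quadratic term is finite-valued and coercive, Fenchel's formula for conjugates of infimal convolutions applies without qualification and yields
\[
(q_\gamma)^* \;=\; q^* + \bigl(\|\cdot\|^2/(2\gamma)\bigr)^* \;=\; q^* + \tfrac{\gamma}{2}\|\cdot\|^2 \, .
\]
The quadratic term has effective domain equal to $E$, so adding it does not alter $\dom q^*$. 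Hence $\dom((q_\gamma)^*) = \dom(q^*)$, and applying the duality identity to both $q$ and $q_\gamma$ gives $(q_\gamma)^\infty = \sigma_{\dom(q_\gamma)^*} = \sigma_{\dom q^*} = q^\infty$.

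There is no genuine obstacle: the only point requiring care is the justification of the conjugate formula for the infimal convolution, which is immediate because one of the two summands (the quadratic) is finite and continuous on $E$. One might alternatively argue directly from the limit definition of $q^\infty$, using the envelope bounds $q(\prox_{\gamma q}(x)) \leq q_\gamma(x) \leq q(x)$ and the sublinear growth of $\|x - \prox_{\gamma q}(x)\|$ along rays, but this route is noticeably more technical and brings nothing beyond the conjugate argument sketched above.
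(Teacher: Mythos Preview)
Your proposal is correct and follows essentially the same approach as the paper: both compute $(q_\gamma)^* = q^* + (\gamma/2)\|\cdot\|^2$ via the infimal convolution formula, observe that $\dom(q_\gamma)^* = \dom q^*$, and conclude via the duality between recession functions and conjugate domains. The only cosmetic difference is that the paper phrases the duality as $(q^\infty)^* = \iota_{\cl\dom q^*}$ (citing \cite[Th.~6.8.5]{lau-(livre)72}) and matches conjugates, whereas you use the equivalent support-function form $q^\infty = \sigma_{\dom q^*}$ directly.
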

\begin{proof}
By \cite[Th.~6.8.5]{lau-(livre)72}, the Legendre-Fenchel transform $(q^\infty)^*$ of $q^\infty$
satisfies $(q^\infty)^* = \iota_{\cl{\dom q^*}}$.
Since $q_\gamma = q \, \square \, ((2\gamma)^{-1} \| \cdot \|^2 )$ where 
$\square$ is the infimal convolution operator, $(q_\gamma)^* = q^* + 
(\gamma/2) \| \cdot \|^2$. Therefore, $\dom q^* = \dom (q_\gamma)^*$, which 
implies that $(q^\infty)^* = ((q_\gamma)^\infty)^*$, and the result follows. 
\end{proof} 

\begin{lemma}[{\cite[Th.~II.2.1]{hir-76}}] 
\label{int<->rec}
Assume that 
$q : \Xi \times  E \to (-\infty, \infty]$ is a normal integrand
such that $q(s, \cdot) \in \Gamma_0$ for almost every $s$.
Assume that $Q(x) \eqdef \int q(s,x) \, \mu(ds)$ belongs to $\Gamma_0$. Then, 
$Q^\infty(x) = \int q^{\infty}(s,x) \, \mu(ds)$, where $q^\infty(s,\cdot)$ is
the recession function of $q(s,\cdot)$. 
\end{lemma}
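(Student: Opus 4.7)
The plan is to base the proof on the well-known characterization of the recession function of a convex function via a monotone difference quotient, and then exchange the integral with a monotone limit using the monotone convergence theorem.

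\medskip

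First, I fix any $x_0 \in \dom Q$. Since $Q(x_0) = \int q(s,x_0)\,\mu(ds) < \infty$ and $q(s,\cdot) \in \Gamma_0$ for $\mu$-a.e.\ $s$, we have $x_0 \in \dom q(s,\cdot)$ for $\mu$-a.e.\ $s$. For any $s$ in this full-measure set, the standard characterization of the recession function gives
$$
q^\infty(s, x) \;=\; \sup_{t>0} \frac{q(s, x_0 + tx) - q(s, x_0)}{t}
\;=\; \lim_{t\to\infty} \frac{q(s, x_0 + tx) - q(s, x_0)}{t},
$$
where the difference quotient is monotone nondecreasing in $t>0$ (by convexity of $q(s,\cdot)$). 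The same identity holds for $Q$, since $Q\in\Gamma_0$. The goal is then to interchange the $t\to\infty$ limit with the integration in $s$.

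\medskip

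Second, to apply the monotone convergence theorem I need a $\mu$-integrable lower bound on the difference quotient, uniformly in $t>0$. To produce one, I would select a measurable map $\eta : \Xi \to E$ such that $\eta(s) \in \partial q(s, x_0)$ for $\mu$-a.e.\ $s$ and $\eta \in \mathcal L^1(\Xi,\mcG,\mu;E)$. Such a selection is available: the multifunction $s \mapsto \partial q(s,x_0)$ is measurable and closed convex valued (since $q$ is a normal integrand), and the interchange formula $\partial Q(x_0) = \int \partial q(s,\cdot,x_0)\,\mu(ds)$ (valid under the mild hypothesis that $x_0$ lies in the relative interior of $\dom Q$, which one may assume up to a preliminary reduction) ensures that $\Selec^1_{\partial q(\cdot,x_0)}$ is non-empty. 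Using the subgradient inequality, one obtains
$$
\frac{q(s, x_0 + tx) - q(s, x_0)}{t} \;\geq\; \langle \eta(s), x \rangle,
$$
which is the required integrable minorant.

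\medskip

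Third, the monotone convergence theorem, applied to the family indexed by $t>0$ (the monotone nondecreasing property ensures that the supremum equals the limit), yields
$$
\int q^\infty(s,x)\,\mu(ds) \;=\; \lim_{t\to\infty} \int \frac{q(s, x_0 + tx) - q(s, x_0)}{t}\,\mu(ds) \;=\; \lim_{t\to\infty} \frac{Q(x_0 + tx) - Q(x_0)}{t} \;=\; Q^\infty(x),
$$
which is the claimed identity. The main obstacle is the measurable $\mathcal L^1$ selection of a subgradient of $q(s,\cdot)$ at $x_0$: it is genuine only at boundary points of $\dom Q$, and the cleanest workaround is either to pick $x_0$ in the relative interior of $\dom Q$ (using that $Q^\infty$ is unchanged by the choice of base point), or to invoke Rockafellar's interchange theorem for normal convex integrands directly to obtain the integrable selection.
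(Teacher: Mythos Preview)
The paper does not prove this lemma; it is stated with a direct citation to \cite[Th.~II.2.1]{hir-76} and no argument is given. Your approach via the monotone difference-quotient characterization of the recession function together with the monotone convergence theorem is the standard route and is essentially correct.

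The one genuinely delicate point, which you flag yourself, is the existence of an integrable subgradient selection $\eta(\cdot)\in\partial q(\cdot,x_0)$ needed as a minorant. Your proposed fix---choosing $x_0$ in the relative interior of $\dom Q$ and invoking the interchange $\partial Q(x_0)=\int \partial q(s,x_0)\,\mu(ds)$---is exactly the right one; this interchange is the result of Rockafellar--Wets cited elsewhere in the paper as \cite{roc-wet-82}, and at a relative-interior point $\partial Q(x_0)\neq\emptyset$, so $\Selec^1_{\partial q(\cdot,x_0)}\neq\emptyset$. With that in place your monotone-convergence step is fully justified and the identity follows.
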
 

We now enter the proof of Prop.~\ref{tight-stogra}. 
Denote by $g_\gamma(s,\cdot)$ the Moreau envelope of the mapping $g(s,\cdot)$
defined above.
\begin{lemma}
\label{Ggamma} 
Let Hypothesis \ref{L2} hold true. Then, for all $\gamma > 0$, the mapping
\[
G^\gamma:x \mapsto \int g_\gamma(s,x) \, \mu(ds) \, ,  
\]
is well defined on $E\to \bR$, and is convex (hence continuous) on $E$. 
Moreover, $G^\gamma \uparrow G$ as $\gamma\downarrow 0$. 
\end{lemma}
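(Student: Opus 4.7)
The plan is to break the statement into three claims: (i) $G^\gamma$ is finite-valued on $E$; (ii) $G^\gamma$ is convex, hence continuous on $\bR^N$; (iii) $G^\gamma(x)\uparrow G(x)$ as $\gamma\downarrow 0$. The unifying technical tool is a pair of integrable bracketing bounds on $g_\gamma(s,\cdot)$ obtained from the hypothesis \ref{L2}.

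For (i), I first note that $g$ is a convex normal integrand on $\Xi\times E$ (for $i=0$, $g((u,0),\cdot)=\alpha(0)^{-1}h(u,\cdot)$ inherits normality from $h$; for $i\neq 0$, $g((u,i),\cdot)=\iota_{\mC_i}$ is deterministic), so its Moreau envelope $g_\gamma$ is a Carathéodory function of $(s,x)$; in particular $s\mapsto g_\gamma(s,x)$ is measurable for every $x$. For the integrability, plugging $w=x_\star$ in the infimum definition gives the upper bound
\[
g_\gamma(s,x)\leq g(s,x_\star)+\frac{\|x-x_\star\|^2}{2\gamma}.
\]
For the lower bound, write $\varphi(s)\in\partial g(s,x_\star)$ (from \ref{L2}), use convexity $g(s,w)\geq g(s,x_\star)+\ps{\varphi(s),w-x_\star}$, and minimize the resulting affine-plus-quadratic over $w$ (minimizer $w=x-\gamma\varphi(s)$):
\[
g_\gamma(s,x)\geq g(s,x_\star)+\ps{\varphi(s),x-x_\star}-\frac{\gamma}{2}\|\varphi(s)\|^2 .
\]
Both bounds are $\mu$-integrable: $g(\cdot,x_\star)$ is integrable thanks to $\int|h(u,x_\star)|\,\zeta(du)<\infty$ and the fact that $x_\star\in\mC$ makes the indicator terms vanish; and $\varphi\in\mathcal L^2$ by \ref{L2}, so $\|\varphi\|$ and $\|\varphi\|^2$ are integrable.

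For (ii), convexity of $g_\gamma(s,\cdot)$ for each $s$ is the standard convexity preservation of the Moreau envelope, and integration preserves convexity, so $G^\gamma\in\Gamma_0$. Being finite on all of $E=\bR^N$ by (i), $G^\gamma$ is automatically continuous. For (iii), I use the classical monotonicity $g_\gamma(s,x)\uparrow g(s,x)$ as $\gamma\downarrow 0$ pointwise in $(s,x)$ (for $g(s,\cdot)\in\Gamma_0$). Applying the monotone convergence theorem with the integrable minorant furnished in (i) yields $G^\gamma(x)\uparrow G(x)$, including the case $G(x)=+\infty$.

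The only subtle point is the lower bracketing bound, since a naive pointwise lower bound like $g_\gamma(s,x)\geq\inf g(s,\cdot)$ need not be integrable in $s$; the subgradient trick using the $\mathcal L^2$ selection $\varphi$ from \ref{L2} is exactly what provides an integrable floor and legitimizes the monotone passage. Everything else amounts to standard facts about the Moreau envelope of normal convex integrands.
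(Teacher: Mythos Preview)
Your proof is correct and follows essentially the same approach as the paper: the crucial integrable lower bound via the subgradient inequality $g(s,w)\geq g(s,x_\star)+\ps{\varphi(s),w-x_\star}$ (exploiting the $\mathcal L^2$ selection from \ref{L2}) is identical, and the convexity and monotone-limit arguments are the same. Your upper bound $g_\gamma(s,x)\leq g(s,x_\star)+\tfrac{1}{2\gamma}\|x-x_\star\|^2$, obtained by plugging $w=x_\star$, is in fact a cleaner shortcut than the paper's route (which first bounds at $x_\star$ via $g_\gamma\leq g$ and then extends to general $x$ through a gradient integral), and your uniform use of monotone convergence with the integrable minorant avoids the paper's case split between $x\in\dom G$ and $x\notin\dom G$.
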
 
\begin{proof}
Since $x_\star \in \dom G$ from Hypothesis \ref{L2}, it holds from the 
definition of the function $g$ that 
$\int | g(s, x_\star) | \, \mu(ds) < \infty$. Moreover, 
noting that $\varphi(s) \in \partial g(s,x_\star)$, the inequality  
$g(s,x) \geq \ps{\varphi(s), x-x_\star} + g(s,x_\star)$ holds. Thus, 
\begin{align*}
g_\gamma(s,x) &= 
 \inf_w \Bigl( g(s,w) + \frac{1}{2\gamma} \| w - x \|^2 \Bigr)
\geq \inf_w \Bigl( \ps{\varphi(s), w-x_\star} + g(s,x_\star)
+ \frac{1}{2\gamma} \| w - x \|^2 \Bigr) \\
&= \ps{\varphi(s), x-x_\star} + g(s,x_\star) 
  - \frac{\gamma}{2} \| \varphi(s)  \|^2 . 
\end{align*} 
Writing $x = x^+ - x^-$ where $x^+ = x \vee 0$, this inequality shows that 
$g_\gamma(\cdot,x_\star)^-$ is integrable. Moreover, since the Moreau envelope 
satisfies $g_\gamma(s,x) \leq g(s,x)$, we obtain that 
$g_\gamma(\cdot,x_\star)^+ \leq g(\cdot,x_\star)^+ \leq | g(\cdot,x_\star) |$ 
who is also integrable. Therefore, $| g_\gamma(\cdot,x_\star) |$ is integrable. 
For other values of $x$, we have 
\[
g_\gamma(s,x) = g_\gamma(s,x_\star) + \int_0^1 
\ps{ x - x_\star, \nabla g_\gamma( s, x_\star + t(x-x_\star)) - 
\nabla g_\gamma( s, x_\star) } \, dt  + 
\ps{ x - x_\star, \nabla g_\gamma( s, x_\star)}, 
\]
where $\nabla g_\gamma(s,x)$ is the gradient of $g_\gamma(s,x)$ w.r.t.~$x$. 
Using the well know properties of the Yosida regularization 
(see Sec.~\ref{basic}), we obtain 
\[
| g_\gamma(s,x) | \leq | g_\gamma(s,x_\star) | + 
\frac{\| x - x_\star\|^2}{2\gamma} 
 + \| x - x_\star\| \, \| \varphi(s) \|^2 . 
\]
Consequently, $g_\gamma(\cdot, x)$ is integrable, thus, $G^\gamma(x)$ is 
defined for all $x\in E$. The convexity and hence the continuity of 
$G^\gamma$ follow trivially from the convexity of $g_\gamma(s,\cdot)$. 

Since the integrand $g_\gamma(s,x)$ increases as $\gamma$ decreases, so is
the case of $G^\gamma(x)$. If $x \in \dom(G)$, it holds that 
$| g(\cdot, x) |$ is integrable. On the one hand, 
$g_\gamma(s,x)^+ \leq | g(s,x) |$, and on the other hand, 
$g_\gamma(s,x)^- \leq 
\| \varphi(s) \| \| x-x_\star\| + | g(s,x_\star) | +  \| \varphi(s)  \|^2$
for $\gamma\leq 2$. By the dominated convergence, $G^\gamma(x) \to G(x)$ as
$\gamma\to 0$. 
If $x \not\in \dom G$, then $\int g_\gamma(s,x)^+ \mu(ds) \to\infty$ as 
$\gamma\to 0$ by monotone convergence, and $\int g_\gamma(s,x)^- \mu(ds)$ remains 
bound. Thus, $G^\gamma(x) \to\infty$. 
\end{proof} 

\begin{lemma}
\label{bnd-psi} 
Let Hypotheses \ref{L2} and \ref{baillon} hold true. Then, for all $\gamma$ 
small enough, 
\[
G^\gamma(x) + F(x) - G^\gamma(x_\star) - F(x_\star) \leq 
2 \psi_\gamma(x) + \gamma C , 
\]
where $\psi_\gamma$ is given by~\eqref{psig}. 
\end{lemma}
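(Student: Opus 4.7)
The plan is to bound the integrand $g_\gamma(s,x) + f(s,x) - g_\gamma(s,x_\star) - f(s,x_\star)$ pointwise via convexity and the Moreau envelope definition, integrate, and match the result to $\psi_\gamma(x)$. Set $y \eqdef x - \gamma B(s,x)$. Three ingredients enter the pointwise bound: (i) the Moreau envelope inequality $g_\gamma(s,x) \leq g(s, J_\gamma(s,y)) + (2\gamma)^{-1}\|J_\gamma(s,y) - x\|^2$ combined with $J_\gamma(s,y) - x = -\gamma(A_\gamma(s,y) + B(s,x))$; (ii) the subgradient inequality $g(s, J_\gamma(s,y)) \leq g(s, x_\star) + \ps{A_\gamma(s,y), J_\gamma(s,y) - x_\star}$, using $A_\gamma(s,y) \in \partial g(s, J_\gamma(s,y))$; and (iii) the companion lower bound $g_\gamma(s, x_\star) \geq g(s, x_\star) - (\gamma/2)\|\varphi(s)\|^2$, obtained from $\varphi(s) \in \partial g(s, x_\star)$ and completion of squares. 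Adding the convexity bound $f(s,x) - f(s,x_\star) \leq \ps{B(s,x), x - x_\star}$ and integrating, the cross terms $\ps{\int \varphi\, \mu(ds), x-x_\star}$ produced by expanding $J_\gamma - x_\star = -\gamma(A_\gamma + B) + (x - x_\star)$ cancel against $\ps{\int B(\cdot,x_\star)\,\mu(ds), x - x_\star}$ thanks to the ${\mathcal L}^2$ representation identity $\int(\varphi + B(\cdot,x_\star))\,\mu(ds) = 0$, while the $\tfrac{\gamma}{2}\int\|\varphi\|^2\mu(ds)$ from (iii) merges with $-\gamma\int \ps{\varphi, A_\gamma+B}\mu(ds)$ and $\tfrac{\gamma}{2}\int\|A_\gamma+B\|^2\mu(ds)$ via $\tfrac{\gamma}{2}\|u\|^2 - \gamma\ps{\varphi,u} + \tfrac{\gamma}{2}\|\varphi\|^2 = \tfrac{\gamma}{2}\|u-\varphi\|^2$. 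This yields
\begin{align*}
G^\gamma(x) + F(x) - G^\gamma(x_\star) - F(x_\star)
&\leq \int \ps{A_\gamma - \varphi, J_\gamma - x_\star}\,\mu(ds) \\
&\quad + \int \ps{B - B(\cdot,x_\star), x - x_\star}\,\mu(ds)
 + \tfrac{\gamma}{2}\int \|A_\gamma + B - \varphi\|^2\,\mu(ds).
\end{align*}

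Next I would expand the quadratic via $(a+b+c)^2 \leq 2a^2 + 4b^2 + 4c^2$ applied to the splitting $A_\gamma + (B - B(\cdot,x_\star)) + (B(\cdot,x_\star) - \varphi)$. The integral of $\|B(\cdot,x_\star) - \varphi\|^2$ is bounded by a constant thanks to Hypothesis~\ref{L2}, while $\|A_\gamma\|^2$ and $\|B - B(\cdot,x_\star)\|^2$ contribute terms with coefficients $\gamma$ and $2\gamma$ respectively. Collecting everything, the bound rearranges into $\psi_\gamma(x) + 8\gamma \int \|B - B(\cdot,x_\star)\|^2\,\mu(ds) + \gamma C$, the $+\gamma \int\|A_\gamma\|^2$ built into $\psi_\gamma$ matching exactly the coefficient produced by the quadratic expansion.

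To close, I would invoke the monotonicity of $\partial g(s,\cdot)$: since $A_\gamma(s,y) \in \partial g(s, J_\gamma(s,y))$ and $\varphi(s) \in \partial g(s, x_\star)$, the integral $\int \ps{A_\gamma - \varphi, J_\gamma - x_\star}\,\mu(ds)$ is nonnegative; together with Hypothesis~\ref{baillon} in its cocoercivity form, which reads $\int \ps{B - B(\cdot,x_\star), x - x_\star}\,\mu(ds) \geq c \int \|B - B(\cdot,x_\star)\|^2\,\mu(ds)$, this produces the lower bound $\psi_\gamma(x) \geq (c - 6\gamma)\int \|B - B(\cdot,x_\star)\|^2\,\mu(ds)$. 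For $\gamma$ sufficiently small (concretely $\gamma \leq c/14$) one has $8\gamma \leq c - 6\gamma$, so the residual $8\gamma \int \|B - B(\cdot,x_\star)\|^2\,\mu(ds)$ is dominated by $\psi_\gamma(x)$, delivering the claimed inequality.

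The main obstacle is the tight bookkeeping. The coefficients $+1$ on $\gamma\int\|A_\gamma\|^2$ and $-6$ on $\gamma\int\|B-B(\cdot,x_\star)\|^2$ in the definition of $\psi_\gamma$ are evidently calibrated so that (a) the $\|A_\gamma\|^2$ contributions cancel exactly between the two sides after expanding the quadratic, and (b) the built-in slack $-6\gamma\int\|B-B(\cdot,x_\star)\|^2$ combined with the cocoercivity constant $c$ leaves enough room to absorb the $+8\gamma\int\|B-B(\cdot,x_\star)\|^2$ residual. A looser expansion such as $(a+b+c)^2 \leq 3a^2 + 3b^2 + 3c^2$ would leave an uncancelable $\tfrac{\gamma}{2}\int\|A_\gamma\|^2$ remainder and the strategy would not close.
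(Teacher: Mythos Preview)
Your argument is correct and closes cleanly; the bookkeeping is right, and the threshold $\gamma\leq c/14$ indeed gives $8\gamma\int\|B-B(\cdot,x_\star)\|^2\,\mu(ds)\leq\psi_\gamma(x)$, yielding the factor~$2$.

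Your route is genuinely different from the paper's. The paper never leaves the Moreau envelope: it applies the convexity of $g_\gamma(s,\cdot)$ at the shifted point $x-\gamma\nabla f(s,x)$ to get $g_\gamma(s,x-\gamma\nabla f)-g_\gamma(s,x_\star)\leq\ps{\nabla g_\gamma(s,x-\gamma\nabla f),x-\gamma\nabla f-x_\star}$, and then uses a second convexity inequality $g_\gamma(s,x-\gamma\nabla f)\geq g_\gamma(s,x)-\gamma\ps{\nabla g_\gamma(s,x),\nabla f}$ to pass back to $g_\gamma(s,x)$. This introduces an extra cross term $\gamma\ps{\nabla g_\gamma(s,x),\nabla f}$ that must be combined with $-\gamma\ps{\varphi,\nabla g_\gamma+\nabla f}$ and controlled via the $\gamma^{-1}$-Lipschitz property of the Yosida regularization and several Cauchy--Schwarz bounds, eventually producing $16\gamma\|\nabla f-\nabla f_\star\|^2-\ps{\nabla f-\nabla f_\star,x-x_\star}$ as the residual absorbed by cocoercivity. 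You instead go \emph{through} $g$: the variational definition of the envelope gives $g_\gamma(s,x)\leq g(s,J_\gamma(s,y))+(2\gamma)^{-1}\|J_\gamma(s,y)-x\|^2$ for the ``wrong'' proximal point $J_\gamma(s,y)$, the subgradient inequality at $J_\gamma(s,y)$ handles $g(s,J_\gamma(s,y))$, and the completion of squares in (iii) handles $g_\gamma(s,x_\star)$. The three pieces combine into the clean perfect square $\tfrac\gamma2\|A_\gamma+B-\varphi\|^2$, after which a single $\|a+b+c\|^2\leq 2\|a\|^2+4\|b\|^2+4\|c\|^2$ finishes. Your path avoids the second convexity correction and the Lipschitz estimate on $A_\gamma$, at the cost of needing the lower bound (iii) on $g_\gamma(s,x_\star)$; overall it is somewhat more direct. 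Your closing remark about the necessity of the asymmetric expansion (coefficient $2$ on $\|A_\gamma\|^2$ rather than $3$) is well observed and matches the calibration of the $+\gamma\|A_\gamma\|^2$ term in~\eqref{psig}.
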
 
\begin{proof} 
By the convexity of $g_\gamma(s,\cdot)$ and $f(s,\cdot)$, we have 
\begin{gather*} 
g_\gamma(s, x - \gamma\nabla f(s,x)) - g_\gamma(s, x_\star) 
\leq \ps{\nabla g_\gamma(s, x - \gamma \nabla f(s,x)), 
  x - \gamma\nabla f(s,x) - x_\star} , \ \text{and} \\ 
f(s, x) - f(s, x_\star) - \ps{\nabla f(s,x_\star), x-x_\star} \leq 
\ps{\nabla f(s,x) - \nabla f(s,x_\star), x - x_\star} .  
\end{gather*} 
Write $g_\gamma = g_\gamma(s, x - \gamma\nabla f(s,x))$, 
$\nabla f = \nabla f(s,x)$,  
$\prox_\gamma = \prox_{\gamma g(s,\cdot)}(x - \gamma \nabla f(s,x))$, 
$\varphi = \varphi(s)$, and $\nabla f_\star = \nabla f(s,x_\star)$. 
From these two inequalities, we obtain 
\begin{align*}
&g_\gamma(s, x - \gamma\nabla f(s,x)) - g_\gamma(s,x_\star) + f(s,x) 
- f(s,x_\star) - \ps{\varphi(s)+\nabla f(s,x_\star), x-x_\star}  \\
&\leq \ps{\nabla g_\gamma, x - \gamma\nabla f - x_\star + 
           \prox_\gamma - \prox_\gamma} 
  + \ps{\nabla f - \nabla f_\star, x - x_\star} 
  - \ps{\varphi, x - x_\star + \prox_\gamma - \prox_\gamma}
   \\
&= \ps{\nabla g_\gamma - \varphi, \prox_\gamma - x_\star} 
  + \ps{\nabla f - \nabla f_\star, x - x_\star} 
  + \gamma \| \nabla g_\gamma \|^2 
 - \gamma\ps{\varphi, \nabla g_\gamma +\nabla f} . 
\end{align*}
Again, by the convexity of $g_\gamma(s,\cdot)$, we have 
\[
g_\gamma(s,  x - \gamma\nabla f(s,x)) \geq g_\gamma(s,x) - 
\gamma\ps{\nabla g_\gamma(s,x), \nabla f(s,x)} .
\]
Thus, we obtain 
\begin{align*}
&g_\gamma(s, x) - g_\gamma(s,x_\star) + f(s,x) - f(s,x_\star) 
 - \ps{\varphi(s)+\nabla f(s,x_\star), x-x_\star} \\
&\leq \ps{\nabla g_\gamma - \varphi, \prox_\gamma - x_\star} 
  + \ps{\nabla f - \nabla f_\star, x - x_\star} 
  + \gamma \| \nabla g_\gamma \|^2 
 - \gamma\ps{\varphi, \nabla g_\gamma + \nabla f}
 + \gamma\ps{\nabla g_\gamma(s,x), \nabla f} .
\end{align*} 
We now bound the sum of the last two terms at the right hand side. 
By the $\gamma^{-1}$-Lipschitz continuity of the Yosida regularization, 
$|\ps{\nabla g_\gamma(s,x) - \nabla g_\gamma, \nabla f}| \leq 
\|\nabla f\|^2$. Using in addition the inequalities $|\ps{a,b}| \leq 
\| a \|^2 / 2 + \| b \|^2 / 2$ and  
$\|  \nabla f \|^2 \leq 2 \| \nabla f_\star \|^2 + 
2 \| \nabla f - \nabla f_\star \|^2$, we obtain 
\begin{align*} 
\gamma\ps{\nabla g_\gamma(s,x), \nabla f}  
- \gamma\ps{\varphi, \nabla g_\gamma + \nabla f} &= 
\gamma\ps{\nabla g_\gamma(s,x) - \nabla g_\gamma, \nabla f}  
+ \gamma\ps{\nabla g_\gamma, \nabla f} 
- \gamma\ps{\varphi, \nabla g_\gamma + \nabla f} \\
&\leq 2\gamma \| \nabla f \|^2 + \gamma \| \nabla g_\gamma \|^2 + 
\gamma \| \varphi \|^2  \\
&\leq 4\gamma \| \nabla f -\nabla f_\star \|^2 
 + 4\gamma \| \nabla f_\star \|^2 + \gamma \| \nabla g_\gamma \|^2 + 
\gamma \| \varphi \|^2 . 
\end{align*} 
Thus, 
\begin{align*}
&g_\gamma(s, x) - g_\gamma(s,x_\star) + f(s,x) - f(s,x_\star) 
 - \ps{\varphi(s)+\nabla f(s,x_\star), x-x_\star} \\
&\leq 2 \left( \ps{\nabla g_\gamma - \varphi, \prox_\gamma - x_\star} 
  + \ps{\nabla f - \nabla f_\star, x - x_\star} 
  + \gamma \| \nabla g_\gamma \|^2 
  - 6 \gamma \| \nabla f - \nabla f_\star \|^2 \right) \\
&\phantom{\leq} + 16 \gamma \| \nabla f - \nabla f_\star \|^2 - 
   \ps{\nabla f - \nabla f_\star, x - x_\star}  
 + \gamma \| \varphi \|^2 + 4 \gamma \|  \nabla f_\star \|^2 \, . 
\end{align*} 
Taking the integral with respect to $\mu(ds)$ at both sides, the contribution
of the inner product $\ps{\varphi+\nabla f_\star, x-x_\star}$ vanishes. 
Recalling~\eqref{psig}, we obtain 
\begin{align*}
& G_\gamma(x) + F(x) - G_\gamma(x_\star) - F(x_\star) \\
&\leq 2 \psi_\gamma(x) - 
\int ( \ps{\nabla f - \nabla f_\star, x - x_\star} - 
     16 \gamma \| \nabla f - \nabla f_\star \|^2 ) \, d\mu 
+ \gamma \int (\| \varphi \|^2 + 4 \|  \nabla f_\star \|^2) \, d\mu \, . 
\end{align*} 
Using Hypothesis~\ref{baillon}, we obtain the desired result. 
\end{proof} 

\paragraph*{End of the proof of Prop.~\ref{tight-stogra}.} 
Let $\gamma_0 > 0$ be such that Lem.~\ref{bnd-psi} holds true for all 
$\gamma\in (0,\gamma_0]$. Denoting as $q(s,\cdot)^\infty$ the recession 
function of $q(s,\cdot)$, we have 
\[
(G^{\gamma_0} + F)^\infty 
\stackrel{\text{(a)}}{=} 
\int \left( (g_{\gamma_0}(s,\cdot))^\infty + f(s,\cdot)^\infty \right) 
  \, \mu(ds) 
\stackrel{\text{(b)}}{=} 
\int \left( g(s,\cdot)^\infty + f(s,\cdot)^\infty \right) \, \mu(ds) 
\stackrel{\text{(c)}}{=} 
(G + F)^\infty \, , 
\]
where the equalities (a) and (c) are due to Lem.~\ref{int<->rec}, and 
(b) is due to Lem.~\ref{rec-moreau}. Thus, by Lem.~\ref{coer-rec}, 
$F+G$ is coercive (resp.~supercoercive) if and only if $F+G^{\gamma_0}$ is
coercive (resp.~supercoercive). Consequently, since $G^\gamma$ increases as 
$\gamma$ decreases by Lem.~\ref{Ggamma}, 
the hypotheses \ref{L2}, \ref{baillon}, and \ref{F+G-coerc}--\ref{Zer-cpct} 
(resp., \ref{L2}, \ref{baillon}, \ref{F+G-coerc}--\ref{F+G-super}) 
imply Assumption~\ref{psi-increase}--\ref{psi-sous-lin} 
(resp.~Assumption~\ref{psi-increase}--\ref{psi-linear}). 
Prop.~\ref{tight-stogra} is proven. 

\subsection{Proof of Lem.~\ref{lm:L2}} 

We first recall that $\partial G(\cdot) = \int \partial g(s,\cdot) \, \mu(ds)$,
where  
\[
\partial g(s, \cdot) = 
 \left\{\begin{array}{ll} \alpha(0)^{-1} \partial h(u,\cdot) &
\text{if } i = 0, \\
\partial \iota_{\mC_i} & \text{otherwise}, 
\end{array}\right. 
\]
for $s = (u,i) \in \Xi$. Let $\psi$ be an arbitrary measurable 
$\Sigma \to E$ function such that $\psi(u) \in \partial h(u, x_\star)$ for 
$\zeta$-almost all $u\in\Sigma$ (such functions are called \emph{measurable 
selections} of the set-valued function $\partial h(\cdot, x_\star)$). For each 
$d\in E$, it holds by the convexity of $h(u,\cdot)$ that 
\begin{align*} 
h(u, x_\star + d) &\geq h(u, x_\star) + \ps{\psi(u), d}, \ \text{and} \\
h(u, x_\star - d) &\geq h(u, x_\star) - \ps{\psi(u), d}, 
\end{align*} 
for $\zeta$-almost all $u\in\Sigma$. Equivalently, 
\[
h(u, x_\star) - h(u, x_\star - d) \leq 
\ps{\psi(u), d} \leq h(u, x_\star + d) - h(u, x_\star) . 
\] 
Thus, if $\| d \|$ is small enough but otherwise $d$ is arbitrary, we get
from the second assumption of the statement that $\ps{\psi(u), d}$ is 
$\zeta$-square-integrable. Thus, $\int \|\psi(u)\|^2 \, \zeta(du) < \infty$ 
(see \cite[Th.~II.4.2]{hir-76} for a similar argument). 
Now, writing $s = (u,i) \in \Xi$, every measurable selection $\phi$ of 
$\partial g(\cdot, x_\star)$ is of the form 
\[
\phi(s) = 
 \left\{\begin{array}{ll} \alpha(0)^{-1} \psi(u) & \text{if } i = 0, \\
  \theta_i & \text{otherwise}, 
\end{array}\right. 
\]
where $\psi$ is a measurable selection of $\partial h(\cdot, x_\star)$, 
and $\theta_i$ is an element of $\partial \iota_{\mC_i}(x_\star)$. By what
precedes, it is immediate that $\int \|\phi\|^2 d\mu < \infty$. By assumption, there
exists a measurable selection $\varphi$ of $\partial g(\cdot, x_\star)$ such 
that $\int (\varphi(s) + \nabla f(u, x_\star)) \mu(ds) = 0$. Using the first
assumption, we get that the couple $(\varphi(s), \nabla f (u, x_\star))$ is a 
${\mathcal L}^2$ representation of $x_\star$.

\subsection{Proof of Prop.~\ref{affine}}

The assertions about $Z(\mA)$ are straightforward. 
A small calculation shows that  
\begin{gather*}
J_\gamma(s,x) = (I + \gamma H(s))^{-1}(x - \gamma d(s)), \quad \text{and} \\ 
A_\gamma(s,x) = A(s, J_\gamma(s,x)) = 
(I + \gamma H(s))^{-1}(H(s) x + d(s))  . 
\end{gather*} 
Using these expressions, we obtain  
\begin{align*} 
\psi_\gamma(x) &= 
\int \Bigl\{ \ps{A(s, J_\gamma(s,x)) - H(s) x_\star - d(s), 
J_\gamma(s,x) - x_\star} + \gamma \| A(s, J_\gamma(s,x)) \|^2 \Bigr\} \, 
\mu(ds) \\
&= \int \Bigl\{ (J_\gamma(s,x) - x_\star)^T \frac{H(s) + H^T(s)}{2}  
(J_\gamma(s,x) - x_\star) + \gamma \| A(s, J_\gamma(s,x)) \|^2 \Bigr\} \, 
\mu(ds) . 
\end{align*} 

Since $(I + \gamma H(s) )^{-1}$ and $H(s) (I + \gamma H(s) )^{-1}$ are
respectively the resolvent and the Yosida regularization of the linear,
monotone and maximal operator $H(s)$, it holds that 
$\| (I + \gamma H(s) )^{-1} \| \leq 1$, and 
$\| \gamma H(s) (I + \gamma H(s))^{-1} \| \leq 1$.

Denoting as $\|\cdot \|_{S}$ the semi norm associated with any
semidefinite nonnegative matrix $S$, we write 
\begin{align*} 
\psi_\gamma(x) &\geq 
\int \| J_\gamma(s,x) - x_\star \|^2_{(H(s) + H^T(s))/2} \, \mu(ds) \\
&= 
\int 
\left\|(I+\gamma H(s))^{-1} \Bigl( (x - x_\star) 
  - \gamma (H(s) x_\star + d(s) ) \Bigr) \right\|^2_{(H(s) + H^T(s))/2} 
  \, \mu(ds) . 
\end{align*} 
Using the inequality $\| a - b \|^2 \geq 0.5 \| a \|^2 - \| b \|^2$, we obtain
that $\psi_\gamma(x) \geq 0.5 W_\gamma(x) - U_\gamma$, with  
\begin{align*}
W_\gamma(x) &= \int 
\left\|(I+\gamma H(s))^{-1} (x - x_\star) \right\|^2_{(H(s) + H^T(s))/2} 
  \, \mu(ds) ,  \quad \text{and} \\
U_\gamma &= \gamma^2 \int 
\left\| (I+\gamma H(s))^{-1} ( H(s) x_\star + d(s) ) 
  \right\|^2_{(H(s) + H^T(s))/2} \, \mu(ds)  \\
&= \gamma \int \left\|  H(s) x_\star + d(s) ) 
  \right\|^2_{\gamma I_{\gamma}(s)} \, \mu(ds) . 
\end{align*} 

with 
\[
I_\gamma(s) = 
(I+\gamma H(s))^{-T} \frac{H(s) + H^T(s)}{2} (I+\gamma H(s))^{-1} . 
\]

From the inequalities shown above, we have 
\[
\Bigl\| \gamma I_{\gamma}(s) \Bigr\| \leq 1 . 
\]
Therefore, 
\[
0 \leq U_\gamma \leq \gamma \int \| H(s) x_\star + d(s) \|^2 \, \mu(ds) 
\leq \gamma C . 
\] 
Turning to $W_\gamma(x)$, it holds that 
\[
W_\gamma(x) = (x - x_\star)^T \Bigl( \int I_\gamma(s) \, \mu(ds) \Bigr) 
(x - x_\star) , 
\]
Since $\| I_\gamma(s) \| \leq \bigl\| \frac{H(s) + H^T(s)}{2} \bigr\| $ and $I_\gamma(s) \to_{\gamma\to 0} 
(H(s) + H^T(s))/2$, it holds by dominated convergence that 
$\int I_\gamma(s) \, \mu(ds)  \to_{\gamma\to 0} {\bs H} + {\bs H}^T$. If 
${\bs H} + {\bs H}^T > 0$, then there exists $\gamma_0 > 0$ such that 
\[
\inf_{\gamma\in(0,\gamma_0]} \lambda_{\text{min}} 
\left( \int I_\gamma(s) \, \mu(ds) \right) > 0 , 
\]
where $\lambda_{\text{min}}$ is the smallest eigenvalue. 
Thus, Assumption~\ref{psi-increase}--\ref{psi-quad} is verified. 

\subsection{Proof of Prop.~\ref{prop-Dbnd}}

Since $A_\gamma(s,\cdot)$ is $1/\gamma$-Lipschitz, 
$\| A_\gamma(s, x - \gamma B(s,x)) \| \geq \| A_\gamma(s, x)\| - 
\| B(s,x) \| \geq \| A_\gamma(s, x)\| - \| B(s,x) - B(s,x_\star) \| - 
\| B(s,x_\star) \|$. Therefore, 
\begin{align*}
&\psi_\gamma(x) \\
&\geq 
 \int \Bigl\{ 
   \ps{B(s, x) - B(s,x_\star), x - x_\star} 
   - 6\gamma \| B(s,x) - B(s,x_\star) \|^2
 + \gamma \| A_\gamma(s, x - \gamma B(s,x)) \|^2 \Bigr\} \, \mu(ds) \\ 
&\geq 
 \int \Bigl\{ 
   \ps{B(s, x) - B(s,x_\star), x - x_\star} 
   - 8\gamma \| B(s,x) - B(s,x_\star) \|^2
 + (\gamma/2) \| A_\gamma(s, x) \|^2 \\ 
 & 
 \ \ \ \ \ \ \ \ \ \ \ \ \ \ \ \ \ \ \ \ \ \ \ \ \ \ \ \ \ \ \ \ 
 \ \ \ \ \ \ \ \ \ \ \ \ \ \ \ \ \ \ \ \ \ \ \ \ \ \ \ \ \ \ \ \ 
 \ \ \ \ \ \ \ \ \ \ \ \ \ \ \ \ \ 
 - 2\gamma \| B(s,x_\star) \|^2 \Bigr\} \, \mu(ds) \\
&\geq 
 \frac\gamma 2 \int \| A_\gamma(s, x) \|^2 \, \mu(ds) - \gamma C  
\end{align*}
for $\gamma$ small enough, by Hypothesis \ref{cocobnd}. We now have 
\[
 \gamma \int \| A_\gamma(s, x) \|^2 \, \mu(ds) = 
\frac 1\gamma \int \| x - J_\gamma(s, x) \|^2 \, \mu(ds) 
\geq \frac 1\gamma \int d(s,x)^2 \, \mu(ds) 
\geq \frac C\gamma \bs d(x)^2 
\]
thanks to Hypothesis~\ref{linreg-bnd}. The 
result follows from the boundedness of $\mD$. 

\subsection{Proof of Prop.~\ref{coer-gal}}

To prove this proposition, we start with the following result. 

\begin{lemma}
\label{ps->infty} 
Let $\sA \in\maxmon$ be such that 
\[
\exists (x_*,y_*) \in \graph(\sA), \ \exists \delta > 0, \quad  
\bs S(x_*, \delta) \subset \inter(\dom \sA),  \ \text{and} \ 
\forall x\in \bs S(x_*, \delta), \ \inf_{y\in \sA(x)} 
\ps{y - y_*, x - x_*} > 0 .   
\]
Then, assuming that $\dom\sA$ is unbounded, 
\[
\liminf_{x\in\dom \sA, \|x\|\to\infty} 
\frac{\inf_{y \in \sA(x)} \ps{y - y_*, x - x_*}}{\| x \|} > 0 .  
\]
\end{lemma}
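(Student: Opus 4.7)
The plan is to study the function $\varphi(x) := \inf_{y \in \sA(x)} \ps{y - y_*, x - x_*}$, prove a uniform positive lower bound on the compact sphere $\bs S(x_*, \delta)$, and then use monotonicity of $\sA$ to propagate this bound radially to all of $\dom \sA$, obtaining linear growth in $\|x\|$.

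First, I would establish that $c := \inf_{z \in \bs S(x_*, \delta)} \varphi(z)$ is strictly positive. By assumption $\varphi(z) > 0$ at every $z \in \bs S(x_*, \delta)$, so it suffices to show that $\varphi$ attains its infimum over this compact sphere, which follows from lower semi-continuity on $\inter(\dom \sA)$. Here I would invoke the standard fact that a maximal monotone operator on $E = \bR^N$ is locally bounded with closed convex values on the interior of its domain: given $z_n \to z_0$ in the sphere's neighborhood, pick minimizers $y_n \in \sA(z_n)$ of $\ps{\cdot - y_*, z_n - x_*}$ (which exist since $\sA(z_n)$ is compact there); local boundedness produces a convergent subsequence $y_n \to y$, closedness of $\graph(\sA)$ gives $y \in \sA(z_0)$, and continuity of the inner product yields $\varphi(z_n) \to \ps{y - y_*, z_0 - x_*} \geq \varphi(z_0)$. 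Hence $\varphi$ is lsc on $\inter(\dom \sA)$, attains its minimum on the compact sphere, and $c > 0$.

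Second, for $x \in \dom \sA$ with $\|x - x_*\| > \delta$, I would set $z := x_* + \delta (x - x_*)/\|x - x_*\| \in \bs S(x_*, \delta)$, so that $x - z = (1 - \delta/\|x - x_*\|)(x - x_*)$ is a positive multiple of $x - x_*$. For arbitrary $y \in \sA(x)$ and $y_z \in \sA(z)$, monotonicity gives $\ps{y - y_z, x - z} \geq 0$, hence $\ps{y - y_z, x - x_*} \geq 0$. Rearranging and using the radial scaling,
\[
\ps{y - y_*, x - x_*} \;\geq\; \ps{y_z - y_*, x - x_*} \;=\; \frac{\|x - x_*\|}{\delta}\,\ps{y_z - y_*, z - x_*}.
\]
Taking infima first over $y_z \in \sA(z)$ and then over $y \in \sA(x)$ yields $\varphi(x) \geq (\|x - x_*\|/\delta)\,\varphi(z) \geq (c/\delta)\,\|x - x_*\|$. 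Dividing by $\|x\|$ and letting $\|x\| \to \infty$ along $\dom \sA$ (so that $\|x - x_*\|/\|x\| \to 1$) gives $\liminf \varphi(x)/\|x\| \geq c/\delta > 0$, which is the desired conclusion.

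The only delicate point is the lsc-and-attainment step used to convert the pointwise positivity of $\varphi$ on the sphere into the uniform bound $c > 0$; it relies on closedness of $\graph(\sA)$ together with local boundedness of $\sA$ on $\inter(\dom \sA)$, both standard properties of maximal monotone operators in finite dimension. The monotonicity-scaling argument that follows is then entirely routine.
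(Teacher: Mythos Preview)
Your proof is correct and follows essentially the same approach as the paper: both arguments obtain a uniform positive lower bound on the sphere $\bs S(x_*,\delta)$ via lower semicontinuity (using local boundedness and the closed graph of $\sA$ on $\inter(\dom\sA)$), and then propagate this bound radially via monotonicity to get linear growth. The paper packages the radial step as the monotonicity of an auxiliary one-variable function $f_u(\lambda)=\inf_{y\in\sA(x_*+\lambda u)}\ps{y-y_*,u}$, but this is just a notational variant of your scaling argument.
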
 
\begin{proof}
Given a vector $u\in  E$, define the function 
\[ 
f_u(\lambda) = \inf_{y\in \sA(x_* + \lambda u)} \ps{y - y_*, u} 
\]
for all $\lambda \geq 0$ such that $x_* + \lambda u \in \dom \sA$.
For all $\lambda_1 > \lambda_2$ in $\dom f_u$, and all 
$y_1 \in A(x_* + \lambda_1 u)$ and $y_2 \in A(x_* + \lambda_2 u)$, we have 
\[
\ps{y_1 - y_*, u} - \ps{y_2 - y_*, u} =  
\ps{y_1 - y_2, u} = \frac{1}{\lambda_1 - \lambda_2} 
\ps{y_1 - y_2, x_* + \lambda_1 u - (x_* + \lambda_2 u) }  
\geq 0 . 
\]
Passing to the infima, we obtain that $f_u(\lambda_1) \geq f_u(\lambda_2)$, in
other words, $f_u$ is non decreasing. 

For all $x \in \dom\sA$ such that $\| x - x_* \| \geq \delta$, we have by
setting $u = \delta (x - x_*) / \| x - x_* \|$
\[
\inf_{y \in \sA(x)} \ps{y - y_*, x - x_*} = 
\frac{\| x - x_* \|}{\delta} f_u(\delta^{-1} \| x - x_* \|) 
\geq \frac{\| x - x_* \|}{\delta} f_u(1) . 
\]
For any $u \in \bs S(0,\delta)$, it holds by assumption that 
$f_u(1) = \inf_{y\in\sA(x_*+u)} \ps{y - y_*, u}$ is positive. We 
shall show that $f_u(1)$ is lower semicontinuous (lsc) as a function of $u$ on
the sphere $\bs S(0,\delta)$.  Since this sphere is compact, $f_u(1)$ attains
its infimum on $\bs S(0,\delta)$, and the lemma will be proven. 

It is well-known that $\sA$ is locally bounded near any point in the interior
if its domain \cite[Prop.~2.9]{bre-livre73} \cite[\S 21.4]{bau-com-livre11}.
Thus, by the closedeness of $\graph(\sA)$, the $\inf$ in the expression of
$f_u(1)$ is attained.  Let $u_n \to u$, and write $f_{u_n}(1) = \ps{y_n - y_*,
u_n}$. By the maximality of $\sA$, we obtain that for any accumulation point $y$
of $(y_n)$ (who exists by the local boundedness), it holds that $(u,y) \in
\graph(\sA)$.  Consequently, $\liminf_n f_{u_n}(1) \geq f_u(1)$, in other
words, $f_u(1)$ is lsc.  
\end{proof}

We now prove Prop.~\ref{coer-gal}. Let us write 
\begin{gather*} 
f(\gamma,s,x) = \frac{\ps{A_\gamma(s, x) - \varphi(s), 
J_\gamma(s,x) - x_\star}}{\| x \|}  
 + \frac{\| x - J_\gamma(s, x) \|^2}{\gamma \| x \|}, \ \text{and} \\ 
g(s, x) = \inf_{\gamma\in (0,1]} f(\gamma,s,x). 
\end{gather*}
Note that $\psi_\gamma(x) / \| x \| = \int f(\gamma,s,x) \, \mu(ds)$. 
We shall show that $\liminf_{\|x\|\to\infty} g(s, x) > 0$ for all $s\in\Sigma$.
Assume the contrary, namely, that there exist $s\in\Sigma$ and 
$\| x_k \| \to\infty$ such that $g(s, x_k) \to 0$. In these conditions, there
exists a sequence $(\gamma_k)$ in $(0,1]$ such that 
$f(\gamma_k, s, x_k) \to 0$. By inspecting the second term in the expression
of $f(\gamma_k,s,x_k)$, we obtain that 
$\| J_{\gamma_k}(s, x_k) \| / \| x_k \| \to 1$. Rewriting the first term as
\[
\frac{\| J_{\gamma_k}(s, x_k) \|}{\| x_k \|}
\frac{\ps{A_{\gamma_k}(s, x_k) - \varphi(s), J_{\gamma_k}(s,x_k) - x_\star}}
{\| J_{\gamma_k}(s, x_k) \|}  , 
\]
and recalling that $A_{\gamma_k}(s, x_k) \in A(s, J_{\gamma_k}(s, x_k))$, 
Lem.~\ref{ps->infty} shows that the $\liminf$ of this term is positive, which
raises a contradiction. 

Note that 
\[
\inf_{\gamma\in (0,1]} \frac{\psi_\gamma(x)}{\| x \|} \geq 
\int g(s,x) \, \mu(ds) . 
\]
Using Fatou's lemma, we obtain 
Assumption~\ref{psi-increase}--\ref{psi-sous-lin}. 


\section{Proofs relative to Section~\ref{sec-apt}} 
\label{anx:apt}

\subsection{Proof of Lem.~\ref{lem:ui}}

Let $\varepsilon$ be the smallest of the three constants (also named $\varepsilon$) in Assumptions~\ref{A0bnd}, \ref{BBnd} and \ref{JBnd-dif}
respectively where $\cK = B_R$.
For every $a,\gamma$, the following holds for $\bar \bP^{a,\gamma}$-almost all $x=(x_n:n\in \bN)$:
\begin{align*}
\bs d(x_{n+1}) \1_{\| x_{n+1} \| \leq R} &= 
\bs d(x_{n+1}) \1_{\| x_{n+1} \| \leq R} ( \1_{\| x_{{n}} \| \leq R}  +
   \1_{\| x_{{n}} \| > R} ) 
=  \bs d(x_{n+1}) \1_{\| x_{n+1} \| \leq R} \1_{\| x_{{n}} \| \leq R} \\ 
&\leq \bs d(x_{n+1}) \1_{\| x_{{n}} \| \leq R} \\
&= \| x_{{n+1}} - \Pi_{\mD}(x_{{n+1}}) \| \1_{\| x_{{n}} \| \leq R} \\
&\leq \| x_{{n+1}} - \Pi_{\mD}(x_{{n}}) \|  \1_{\| x_{{n}}\| \leq R} \,.
 \end{align*} 
Using the notation $\bar \bE_n^{a,\gamma} = \bar \bE^{a,\gamma}(\,.\,|x_0,\dots,x_n)$, we thus obtain:
\begin{align*}
  \bar \bE_n^{a,\gamma}(\bs d(x_{n+1})^{1+\varepsilon}\1_{\| x_{n+1}
    \| \leq R})&\leq \int \| J_\gamma(s,x_n-\gamma B(s,x_n)) -  \Pi_{\mD}(x_{{n}}) \|^{1+\varepsilon} \1_{\| x_{{n}}\| \leq R}\,d\mu(s)\,.
\end{align*}
By the convexity of $\|\cdot\|^{1+\varepsilon}$, for all $\alpha \in (0,1)$, 
\begin{align*} 
\| x+y\|^{1+\varepsilon} &= \frac{1}{\alpha^{1+\varepsilon}} 
\Bigl\| \alpha x + (1-\alpha) \frac{\alpha}{1-\alpha} y \Bigr\|^{1+\varepsilon}
\leq \alpha^{-\varepsilon} \| x\|^{1+\varepsilon} + 
(1-\alpha)^{-\varepsilon} \| y\|^{1+\varepsilon}\,.
\end{align*} 
Therefore, by setting $\delta_\gamma(s,a)\eqdef \|J_\gamma(s,a-\gamma B(s,a)) -  \Pi_{D(s)}(a)\|$, 
\begin{multline*}
  \bar \bE_n^{a,\gamma}(\bs d(x_{n+1})^{1+\varepsilon}\1_{\| x_{n+1}
    \| \leq R}) \leq \alpha^{-\varepsilon} \int \delta_\gamma(s,x_n)^{1+\varepsilon} \1_{\| x_{{n}}\| \leq R}\,d\mu(s) \\ +
(1-\alpha)^{-\varepsilon} \int \| \Pi_{D(s)}(x_{{n}}) -  \Pi_{\mD}(x_{{n}}) \|^{1+\varepsilon} \1_{\| x_{{n}}\| \leq R}\,d\mu(s)\,.
\end{multline*}
Note that for every $s\in \Xi$, $a\in E$,
$$
\|\delta_\gamma(s,a)\| \leq \|J_\gamma(s,a) - \Pi_{D(s)}(a)\| 
 + \gamma \|B(s,a)\|\,.
$$
Hence, by Assumptions~\ref{JBnd-dif} and~\ref{BBnd}, there exists a deterministic constant $C>0$ s.t.
$$
\sup_n \int \delta_\gamma(s,x_n)^{1+\varepsilon} \1_{\| x_{{n}}\| \leq R}\,d\mu(s) \leq C \gamma^{1+\varepsilon}\,.
$$
Moreover, 
since $\Pi_{\cl(D(s))}$ is a firmly non expansive operator 
\cite[Chap.~4]{bau-com-livre11}, it holds that for all  $u \in \cl(\mD)$, and 
for $\mu$-almost all $s$,  
\begin{align*}
\| \Pi_{\cl(D(s))}(x_n) - u \|^2 &\leq \| x_n - u \|^2 - \| \Pi_{\cl(D(s))}(x_n) - x_n \|^2 . 
\end{align*}
Taking $u= \Pi_{\cl(\mD)}(x_n)$, we obtain that 
\begin{equation} 
\label{pi-fne}
\| \Pi_{\cl(D(s))}(x_n) - \Pi_{\cl(\mD)}(x_n) \|^2 \leq 
\bs d(x_n)^2 - d(x_n, D(s))^2 .
\end{equation} 
Making use of Assumption \ref{linreg}, and assuming without loss of generality 
that $\varepsilon \leq 1$, we obtain 
\begin{align*}
   \int \| \Pi_{\cl(D(s))}(x_{{n}}) -  \Pi_{\cl(\mD)}(x_{{n}}) \|^{1+\varepsilon} 
  \,d\mu(s)
&\leq  \left(\int \| \Pi_{\cl(D(s))}(x_{{n}}) -  \Pi_{\cl(\mD)}(x_{{n}}) \|^2 
  \,d\mu(s)\right)^{(1+\varepsilon)/2} \\
&\leq  \alpha'  \bs d(x_n)^{1+\varepsilon}\,,
\end{align*}
for some $\alpha' \in [0, 1)$. Choosing $\alpha$ close enough to zero, we
obtain that there exists $\rho \in [0,1)$ such that 
\begin{align*} 
 \bar \bE_n^{a,\gamma}\left(\frac{\bs d(x_{n+1})^{1+\varepsilon}}{\gamma^{1+\varepsilon}}\1_{\| x_{n+1}
    \| \leq R}\right)
&\leq 
\rho \frac{\bs d(x_{n})^{1+\varepsilon}}{\gamma^{1+\varepsilon}} \1_{\| x_{n} \| \leq R} + C . 
\end{align*} 
Taking the expectation at both sides, iterating, and using the fact that 
$\bs d(x_0) = \bs d(a) < M \gamma$, we obtain that
\begin{equation}
\sup_{{n\in\bN, a\in \cK\cap \mD_{\gamma M} , \gamma\in (0,\gamma_0]}} 
\bar \bE^{a,\gamma}\left(\left(\frac{\bs d(x_{n})}{\gamma}\right)^{1+\varepsilon}\1_{\|x_{n}\|\leq R}\right)< +\infty\,.
\label{eq:d-vanish}
\end{equation}
Since $A_\gamma(s,\cdot)$ is $\gamma^{-1}$-Lipschitz continuous, 
$\| A_\gamma(s, x - \gamma B(s,x)) \| 
\leq  \| A_\gamma(s, x) \| + \| B(s,x) \|$. 
Moreover, choosing measurably $\tilde x \in \mD$ in such a way that 
$\| x - \tilde x \| \leq 2 \bs d(x)$, we obtain 
$\| A_\gamma(s, x) \| \leq \| A_0(s, \tilde x) \| + 2 \frac{\bs d(x)}{\gamma}$.
Therefore, there exists $R'$ depending only on $R$ and $\mD$ s.t.
\[
\| A_\gamma(s, x) \| \1_{\| x\| \leq R} \leq 
\| A_0(s, \tilde x) \| \1_{\| \tilde x\| \leq R'} + 
2 \frac{\bs d(x)}{\gamma} \1_{\| x\| \leq R} \,.
\]
Thus, 
\begin{align}
  \bar\bE^{a,\gamma}_n(\|Z_{n+1}^\gamma\|^{1+\varepsilon}) &= \int \|h_{\gamma,R}(s,x_n)\|^{1+\varepsilon}d\mu(s) \nonumber \\
&= \int \|B(s,x_n) + A_\gamma(s,x_n-\gamma B(s,x_n))\|^{1+\varepsilon}\1_{\| x_n\| \leq R}\,d\mu(s) \nonumber  \\
&\leq \int \left(2\|B(s,x_n)\| + \|A_0(s,\tilde x_n)\| + 2\frac{\bs d(x_n)}\gamma\right)^{1+\varepsilon}\1_{\| x_n\| \leq R'}\,d\mu(s)\,. \label{eq:EspCondZn}
\end{align}
By Assumption~\ref{BBnd}, $\int \|B(s,x_n)\|^{1+\varepsilon}\1_{\| x_n\| \leq R}\,d\mu(s) \leq C$
where the constant $C$ depends only on $\varepsilon$ and $R$.
By Assumption~\ref{A0bnd}, we also have $\int \|A_0(s,x_n)\|^{1+\varepsilon}\1_{\| x_n\| \leq R}\,d\mu(s) \leq C$
for some (other) constant $C$. The third term is controlled by Eq.~(\ref{eq:d-vanish}).
Taking expectations, the bound~\eqref{eq:ui} is established. 

\subsection{Proof of Lem.~\ref{lem:sko}}

The first point can be obtained by straightforward application of Prokhorov and Skorokhod's theorems.
However, to verify the second point, we need to construct the sequences more carefully. 
Choose $\varepsilon>0$ as in Lem.~\ref{lem:ui}.
We define the process $Y^\gamma:E^\bN\to \bR^\bN$ s.t. for every $n\in \bN$,
$$
Y_{n}^\gamma(x) \eqdef \sum_{k=0}^{n-1} \frac{\bs d(x_k)^{1+\varepsilon/2}}{\gamma^{\varepsilon/2}}\1_{\|x_k\|\leq R}\,,
$$
and we denote by $(X,Y^\gamma):E^\bN\to (E\times\bR)^\bN$ the process given by $(X,Y^\gamma)_n(x)\eqdef (x_n,Y^\gamma_{n}(x))$.
We define for every $n$, $\tilde Z_{n+1}^\gamma \eqdef \gamma^{-1}((X,Y^\gamma)_{n+1}-(X,Y^\gamma)_{n})$.
By Lem.~\ref{lem:ui}, it is easily seen that 
$$
\sup_{n\in\bN, a\in \cK\cap \mD_{\gamma M} , \gamma\in (0,\gamma_0]}\bar \bE^{a,\gamma}\left( \|\tilde Z_n^\gamma\|\1_{\| \tilde Z_n^\gamma\|>A}\right) \xrightarrow[]{A\to +\infty}  0\,.
$$
We now apply \cite[Lemma 4.2]{bia-hac-sal-(arxiv)16}, only replacing $E$ by $E\times \bR$ and
$\bar \bP^{a,\gamma}$ by $\bar \bP^{a,\gamma}(X,Y^\gamma)^{-1}$. By this lemma, the family 
$\{\bar \bP^{a,\gamma}(X,Y^\gamma)^{-1}\overline \sX_\gamma^{-1}:
 a\in \cK\cap \mD_{\gamma M} , \gamma\in (0,\gamma_0]\}$ is tight,
where $\overline \sX_\gamma^{-1}:(E\times \bR)^\bN\to C(\bR_+,E\times \bR)$ is the piecewise linear interpolated process,
defined in the same way as $\sX_\gamma$ only substituting $E\times \bR$ with $E$ in the definition.
By Prokhorov's theorem, one can choose the subsequence $(a_n,\gamma_n)$ s.t.
$\bar \bP^{a_n,\gamma_n}(X,Y^{\gamma_n})^{-1}\overline \sX_{\gamma_n}^{-1}$ converges narrowly to some probability measure $\Upsilon$ on $E\times\bR$.
By Skorokhod's theorem, we can define a stochastic process $((\sx_n,\sy_n):n\in \bN)$ on some probability space $(\Omega',\mcF',\bP')$
into $C(\bR_+,E\times \bR)$, whose distribution for a fixed $n$ coincides with $\bar \bP^{a_n,\gamma_n}(X,Y^{\gamma_n})^{-1}\overline \sX_{\gamma_n}^{-1}$, and
 s.t. for every $\omega\in \Omega'$, $(\sx_n(\omega),\sy_n(\omega))\to (\sz(\omega),\sw(\omega))$,
where $(\sz,\sw)$ is a r.v. defined on the same space. In particular, the first marginal distribution of
$\bar \bP^{a_n,\gamma_n}(X,Y^{\gamma_n})^{-1}\overline \sX_{\gamma_n}^{-1}$ coincides with $\bar \bP^{a_n,\gamma_n}\sX_{\gamma_n}^{-1}$. Thus, the first point 
is proven. 
 
For every $\gamma\in (0,\gamma_0]$, introduce the mapping
\begin{eqnarray*}
  \Gamma_\gamma : C(\bR_+,E) &\to& C(\bR_+,\bR) \\
\sx &\mapsto& \left( t\mapsto \int_0^t (\gamma^{-1}\bs d(\sx(\gamma\lfloor u/\gamma\rfloor)))^{1+\varepsilon/2}\1_{\|\sx(\gamma\lfloor u/\gamma\rfloor)\|\leq R}du\right)\ .
\end{eqnarray*}
We denote by  $\underline \sX_\gamma^{-1}: \bR^\bN\to C(\bR_+,\bR)$ the piecewise linear interpolated process,
defined in the same way as $\sX_\gamma$ only substituting $\bR$ with $E$ in the definition. It is straightforward to show that
$\underline \sX_\gamma\circ Y^{\gamma_n} = \Gamma_\gamma\circ \sX_\gamma$.
For every $n$, by definition of the couple $(\sx_n,\sy_n)$, the distribution under $\bP'$ of 
the r.v. $\Gamma_{\gamma_n}(\sx_n)-\sy_n$ is equal to the distribution
of $\Gamma_{\gamma_n}\circ \sX_{\gamma_n} - \underline \sX_{\gamma_n}\circ Y^{\gamma_n}$ under $\bar \bP^{a_n,\gamma_n}$.
Therefore, $\bP'$-a.e. and for every $n$, $\sy_n=\Gamma_{\gamma_n}(\sx_n)$.
This implies that, $\bP'$-a.e., $\Gamma_{\gamma_n}(\sx_n)$ converges (uniformly on compact set)
to $\sw$. On that event, this implies that for every $T\geq 0$, $\Gamma_{\gamma_n}(\sx_n)(T)\to \sw(T)$, which is finite.
Hence, $\sup_n  \Gamma_{\gamma_n}(\sx_n)(T)<\infty$ on that event, which proves the second point.

\subsection{Proof of Lem.~\ref{lem:z-in-D}}

  For every $t\geq 0$, notice that $u_n(t)\to\sz(t)$ $\bP'$-a.e.
Thus, $\bs d(\sz(t))\1_{\|\sz(t)\| < R} \leq \liminf_n \bs d(u_n(t))\1_{\|u_n(t)\|\leq R} \,.$
By Fatou's lemma, $$\bE'(\bs d(\sz(t))\1_{\|\sz(t)\|<R}) \leq \liminf_n \bE'( \bs d(u_n(t))\1_{\|u_n(t)\|\leq R})\,.$$
Define $k_n = \lfloor \frac t{\gamma_n}\rfloor$ and notice that
  \begin{align*}
    \bE'( \bs d(u_n(t))\1_{\|u_n(t)\|\leq R}) &= \bar \bE^{a_n,\gamma_n}(\bs d(x_{k_n})\1_{\|x_{k_n}\|\leq R}) \\
&\leq \sup_{k\in \bN} \bar \bE^{a_n,\gamma_n}(\bs d(x_{k})\1_{\|x_{k}\|\leq R}) \,.
  \end{align*}
By Lem.~\ref{lem:ui} and since $\gamma_n\to 0$, the supremum in the above inequality converges to zero as $n\to\infty$.
As a consequence, $\bE'(\bs d(\sz(t))\1_{\|\sz(t)\|<R})=0$. This means that, $\bP'$-a.e., $\sz(t)\in \bmD\cup B_R^c$.
As $\bmD\cup B_R^c$ is closed and $\sz$ is continuous, the probability-one event on which the above inclusion holds can be made independent from $t$,
and the conclusion follows.

\subsection{Proof of Lem.~\ref{lem:cv-HR}}

Consider any $t\geq 0$ and any $s$ s.t. $\mD\subset D(s)$. We prove that $(u_n(t),v_n(s,t))\to \graph(H_R(s,\,.\,))$.
It is clear that $u_n(t)\to \sz(t)$. If $\|\sz(t)\|\geq R$, the result is trivial. We now assume that $\|\sz(t)\|<R$.
In this case, note that $\sz(t)\in \bmD$ by Lem.~\ref{lem:z-in-D}. This also implies that $\sz(t)\in \cl(D(s))$.

To simplify notations, we now omit the dependence in $(s,t)$ and write $u_n\eqdef u_n(t)$, $v_n\eqdef v_n(s,t)$,
$A\eqdef A(s,\,.\,)$, $B\eqdef B(s,\,.\,)$, $\gamma\eqdef\gamma_n$, $J_\gamma\eqdef J_{\gamma}(s,\,.\,)$, $A_\gamma\eqdef A_{\gamma}(s,\,.\,)$, 
$D\eqdef D(s)$, $H_R=H_R(s,\,.\,)$, $\sz\eqdef\sz(t)$.
We also define $\tilde u_n\eqdef J_\gamma(u_n-\gamma B(u_n))$.

As $\|\sz\|<R$, it holds that $\|u_n\|<R$ for every $n$ large enough. Thus, $-v_n = B(u_n)+A_\gamma(u_n-\gamma B(u_n))$.
We decompose: 
$$
(u_n,-v_n) = (\tilde u_n, B(\tilde u_n)+A_\gamma(u_n-\gamma B(u_n))) + (u_n-\tilde u_n,B(u_n)-B(\tilde u_n))\,.
$$
As $A_\gamma(u_n-\gamma B(u_n))\in A(\tilde u_n)$, the first term in the right hand side belongs to $\graph(A+B)$.
It remains to show that the second term converges to zero, and we deduce that $(u_n,v_n)\to \graph(H_R)$, as obviously
$\graph(-A-B)\subset \graph(H_R)$. One has
\begin{align*}
  \|\tilde u_n-u_n\| &\leq \|J_\gamma(u_n-\gamma B(u_n)) - (u_n-\gamma B(u_n))\| + \gamma \|B(u_n)\| \\
&= \gamma \|A_\gamma(u_n-\gamma B(u_n))\| + \gamma \|B(u_n)\| \\
&\leq \gamma \|A_\gamma(\sz)\|+ \gamma \|A_\gamma(u_n-\gamma B(u_n))-A_\gamma(\sz)\| + \gamma \|B(u_n)\| \\
&\leq \|J_\gamma(\sz)-\sz\|+ \|u_n-\gamma B(u_n)-\sz\| + \gamma \|B(u_n)\| \ ,
\end{align*}
where, for the last inequality, we used the $\gamma^{-1}$-Lipschitz continuity of $A_\gamma$.
As $\sz\in \cl(D(s))$, it holds that $\|J_\gamma(\sz)-\sz\|\to 0$.
Using the continuity of $B$ and the convergence $u_n\to \sz$,  we conclude 
that $\|\tilde u_n-u_n\|\to 0$ as $n\to\infty$. Thus, 
$(u_n-\tilde u_n,B(u_n)-B(\tilde u_n))\to 0$ and the lemma is shown.

\subsection{Proof of Lem.~\ref{lem:v_bounded}}

Define $c_a\eqdef \sup_{a\in B_R\cap \mD}\int \|A_0(s,a)\|^{1+\varepsilon/2}d\mu(s)$
and $c_b\eqdef \sup_{a:\|a\|\leq R}\int \|B(s,a)\|^{1+\varepsilon/2}d\mu(s)$ (these constants
being finite by Assumptions~\ref{A0bnd} and~\ref{BBnd}).
By the same derivations as those leading to Eq.~\eqref{eq:EspCondZn}, we obtain
\begin{align*}
   \int \|v_n(s,t)\|^{1+\varepsilon/2}d\mu(s) &\leq C
\left( \frac{\bs d(u_n(t))^{1+\varepsilon/2}}{\gamma^{1+\varepsilon/2}}\1_{\|u_n(t)\|\leq R}+c_a+c_b\right)\,.
\end{align*}
The proof is concluded by applying Lem.~\ref{lem:sko}.

\subsection{Proof of Lemma \ref{lem:cvth}} 

The sequence $((v_n,\|v_n(\,.\,,\,.\,)\|))$ converges weakly to $(v,w)$ in $\cL^{1+\varepsilon/2}_{E\times\bR}$ along some subsequence
(\emph{n.b.}: compactness and sequential compactness are the same notions in the weak topology of $\cL^{1+\varepsilon/2}_{E\times\bR}$).
We still denote by $((v_n,\|v_n(\,.\,,\,.\,)\|))$ this subsequence.
By Mazur's theorem, there exists a function $J : \mathbb N \to \mathbb N$ and a sequence 
of sets of weights $\{ \alpha_{k,n}: n\in\bN, k=n\ldots, J(n) \, : \, 
\alpha_{k,n} \geq 0, \sum_{k=n}^{J(n)} \alpha_{k,n} = 1 \}$ such 
that the sequence of functions 
$$
(\bar v_n,\bar w_n)\,:\,(s,t)\mapsto \sum_{k=n}^{J(n)} \alpha_{k,n} (v_k(s,t),\|v_k(s,t)\|)
$$
converges strongly to $(v,w)$ in that space, as $n\to\infty$.  Taking
a further subsequence (which we still denote by $(\bar v_n,\bar w_n)$)
we obtain the $\mu\otimes\lambda_T$-almost everywhere convergence of
$(\bar v_n,\bar w_n)$ to $(v,w)$.  Consider a negligible set $\mcN\in
\mcB([0,T])\otimes \mcG$ such that for all $(s,t)\notin \mcN$, the
following assertions are true: \emph{i)} $(\bar v_n(s,t),\bar
w_n(s,t))\to (v(s,t),w(s,t))$; \emph{ii)}
$(u_n(t),v_n(s,t))\to_n \mathrm{gr}(H_R(s,\,.\,))$;  \emph{iii)} $w(s,t)$ is finite.  
The point \emph{ii)} is made possible by Lem.~\ref{lem:cv-HR}.
Let $\varepsilon>0$. By
conditions \emph{ii)} and the fact that $u_n(t)\to\sz(t)$, there exists
$n=n_\varepsilon$ s.t. for all $k\geq n$, there exists $(a_k,b_k)\in
\graph(H_R(s,\,.\,))$ satisfying $\|a_k-\sz(t)\|<\varepsilon$ and
$\|b_k-v_k(s,t)\|<\varepsilon$.  If $\|z(t)\|\geq R$, obviously $(\sz(t),v(s,t))\in \graph(H_R(s,\,.\,))$.
We just need to consider the case where $\|z(t)\|< R$, in which case
the condition $(\sz(t),v(s,t))\in \graph(H_R(s,\,.\,))$
is equivalent to:
\begin{equation}
  \label{eq:sv-graph}
  (\sz(t),- v(s,t)) \in \graph(A(s,\,.\,) + B(s,\,.\,))\,.
\end{equation}
To show Eq.~(\ref{eq:sv-graph}), consider an arbitrary $(p,q)\in \graph(A(s,\,.\,) + B(s,\,.\,))$. Decompose:
\begin{equation}
\ps{q+\bar v_n(s,t), p-\sz(t)} = A_n + B_n + C_n \,,\label{eq:psMaxMon}
\end{equation}
where
\begin{align*}
  A_n & =  \sum_{k=n}^{J(n)}\alpha_{k,n}\ps{q+b_k, p-a_k} \\
B_n &= \sum_{k=n}^{J(n)}\alpha_{k,n}\ps{-b_k+v_k(s,t), p-a_k} \\
C_n &= \sum_{k=n}^{J(n)}\alpha_{k,n}\ps{q+v_k(s,t), a_k-\sz(t)}\,.
\end{align*}
The left hand side of \eqref{eq:psMaxMon} converges to $\ps{q+v(s,t), p-\sz(t)}$.
The term $A_n$ is positive by monotonicity of $A(s,\,.\,) + B(s,\,.\,)$. Moreover, 
\begin{align*}
  B_n&\geq - \varepsilon \sum_{k=n}^{J(n)}\alpha_{k,n}\|p-a_k\| 
\geq -\varepsilon ( \|p\| + \sup_{k\geq n}\|a_k\|)\ \geq -\varepsilon (C+\varepsilon)\,,
\end{align*}
where the constant $C \eqdef \|p\|+\sup_n \|u_n(t)\|$ is finite, since
$u_n(t)$ converges. Similarly,
\begin{align*}
  C_n &\geq - \varepsilon \sum_{k=n}^{J(n)}\alpha_{k,n}(\|q\|+\|v_k(s,t)\|)\,,
\end{align*}
and the right hand side converges to $-\varepsilon (\|q\|+w(s,t))$.
Letting $\varepsilon\to 0$, we conclude that $\ps{q+v(s,t), p-\sz(t)}\geq 0$.
As $A(s,\,.\,) + B(s,\,.\,)\in \maxmon$, this implies that 
Eq.~\eqref{eq:sv-graph} holds.


\def\cprime{$'$} \def\cdprime{$''$} \def\cprime{$'$}

\end{document}